\documentclass[11 pt]{amsart}
\usepackage[applemac]{inputenc}

\usepackage[dvipsnames,svgnames,x11names]{xcolor}
\definecolor{ffmblue}{HTML}{006092}
\usepackage[top=20mm,bottom=20mm,left=20mm,right=20mm]{geometry}

\usepackage[hyphens]{url}
\usepackage[colorlinks=true,linkcolor=Maroon,citecolor=Maroon,urlcolor=ffmblue,hypertexnames=false,linktocpage]{hyperref}
\usepackage{bookmark}
\usepackage{amsmath,thmtools,mathtools}
\mathtoolsset{showonlyrefs=true}
\newcounter{mparcnt}

\usepackage{fancyhdr}
\usepackage{esint}
\usepackage{enumerate}

\usepackage{pictexwd,dcpic}
\usepackage{graphicx}
\usepackage{caption}
\swapnumbers
\declaretheorem[name=Theorem,numberwithin=section]{thm}

\declaretheorem[name=Lemma,sibling=thm]{lemma}
\declaretheorem[name=Proposition,sibling=thm]{prop}
\declaretheorem[name=Definition,style=definition,sibling=thm]{defn}
\declaretheorem[name=Corollary,sibling=thm]{cor}
\declaretheorem[name=Assumption,style=definition,sibling=thm]{assum}

\declaretheorem[style=remark,name=Remark,numbered=no]{remark}

\numberwithin{equation}{section}

\newcommand{\ti}{\tilde}

\newcommand{\wh}{\widehat}
\newcommand{\bs}{\backslash}
\newcommand{\cn}{\colon}
\newcommand{\sub}{\subset}

\newcommand{\bbN}{\mathbb{N}}

\newcommand{\bbR}{\mathbb{R}}

\newcommand{\bbS}{\mathbb{S}}

\newcommand{\bbB}{\mathbb{B}}

\newcommand{\8}{\infty}

\newcommand{\al}{\alpha}

\newcommand{\de}{\delta}
\newcommand{\ep}{\epsilon}
\newcommand{\ka}{\kappa}

\newcommand{\Si}{\Sigma}

\newcommand{\De}{\Delta}
\newcommand{\Ga}{\Gamma}

\newcommand{\La}{\Lambda}

\newcommand{\cL}{\mathcal{L}}

\newcommand{\cR}{\mathcal{R}}

\newcommand{\cT}{\mathcal{T}}

\newcommand{\del}{\partial}
\newcommand{\n}{\nabla}

\newcommand{\fa}{\forall}
\newcommand{\rt}{\sqrt}

\newcommand{\ip}[2]{\left\langle #1,#2 \right\rangle}
\newcommand{\fr}[2]{\frac{#1}{#2}}
\newcommand{\tfr}[2]{\tfrac{#1}{#2}}
\newcommand{\x}{\times}

\DeclareMathOperator{\dist}{dist}

\newcommand{\pf}[1]{\begin{proof}#1 \end{proof}}
\newcommand{\eq}[1]{\begin{equation}\begin{alignedat}{2} #1 \end{alignedat}\end{equation}}

\newcommand{\br}[1]{\left(#1\right)}
\newcommand{\abs}[1]{\lvert #1\rvert}
\newcommand{\enum}[1]{\begin{enumerate}[(i)] #1 \end{enumerate}}

\newcommand{\Matrix}[1]{\begin{pmatrix} #1 \end{pmatrix}}

\newcommand{\ra}{\rightarrow}
\newcommand{\hra}{\hookrightarrow}

\newcommand{\mt}{\mapsto}

\newcommand{\mrm}{\mathrm}

\newcommand{\hp}{\hphantom}
\newcommand{\q}{\quad}

\begin{document}
\title[Quermassintegral inequalities for convex capillary hypersurfaces in the unit ball]{Capillary quermassintegral inequalities in the unit ball}
\begin{abstract}

This paper is about hypersurfaces with boundary lying in the Euclidean unit ball, which meet the unit sphere at a fixed angle $\theta\in(0,\fr{\pi}{2}]$. Such hypersurfaces are called $\theta$-capillary hypersurfaces and for those we introduce a new notion of convexity, which we call $\theta$-horocap-convexity. For such hypersurfaces, we prove the convergence of a curvature flow of Guan/Li type with capillary boundary. Remarkably, we prove this result for a class of curvature functions which include all quotients of symmetric polynomials and, as a consequence, we obtain the full set of quermassintegral inequalities in the $\theta$-horocap-convex case. In the strictly horocap-convex setting, we employ the flow to prove the geometric inequalities, while for the horocap-convex case and the characterization of the equality case, we develop new arguments which are interesting in their own right.

\end{abstract}
\date{\today}
\keywords{Locally constrained curvature flow; Quermassintegral inequalities; Capillary hypersurfaces}
\author{Shujing Pan}
\address{\flushleft\parbox{\linewidth}{{\bf Shujing Pan}\\Goethe-Universit\"at\\ Institut f\"ur Mathematik\\ Robert-Mayer-Str.~10\\ 60325 Frankfurt\\ Germany\\ {\href{mailto:pan@math.uni-frankfurt.de}{pan@math.uni-frankfurt.de}}}}

\author{Julian Scheuer}
\address{\flushleft\parbox{\linewidth}{{\bf Julian Scheuer}\\Goethe-Universit\"at\\ Institut f\"ur Mathematik\\ Robert-Mayer-Str.~10\\ 60325 Frankfurt\\ Germany\\ {\href{mailto:scheuer@math.uni-frankfurt.de}{scheuer@math.uni-frankfurt.de}}}}

\maketitle

\section{Introduction}

\subsection*{Results}

For $n\in \bbN$, we consider weakly convex hypersurfaces with boundary (i.e. the second fundamental form is weakly definite) of disk-type in the closed unit ball $\bbB^{n+1}$ with boundary the $n$-sphere $\bbS^{n} = \del\bbB^{n+1}$,
\eq{x\cn  (\mrm{int}(\bbB^{n}),\del\bbB^{n})\ra (\mrm{int}(\Si),\del\Si)\hra (\mrm{int}(\bbB^{n+1}),\bbS^{n}),}
which meet $\bbS^{n}$ at a fixed angle $\theta$. By convention, this means that two oriented orthonormal bases $(\bar N(x),\bar\nu(x))$ of the normal space $N_{x}\del\Si$ and $(\mu,\nu)$ of $N_{x}\del\Si$ are related via
\eq{\Matrix{\mu \\ \nu} = \Matrix{\sin\theta & \cos\theta \\ -\cos\theta & \sin\theta} \Matrix{\bar N \\ \bar\nu	},}
or equivalently
\eq{\Matrix{\bar N \\ \bar\nu} = \Matrix{\sin\theta & -\cos\theta \\ \cos\theta & \sin\theta} \Matrix{\mu \\ \nu	}.}
To fix those vectors uniquely, we choose $\bar N(x) = x$, the conormal $\mu$ of $\del\Si\sub\Si$ outward pointing and $\nu$, such that in the Gaussian formula for $\Si$,
\eq{\label{eq:Gauss formula}D_{X}Y = \n_{X}Y - h(X,Y)\nu,}
the second fundamental form $h$ is non-negative definite. If a surface $\Si$ meets the above configuration, in the following we call it {\it $\theta$-capillary}, or sometimes simply {\it capillary}.

This paper contains two main results. First, we consider a class of curvature flows $(\Si_{t})_{t>0}$ of $\theta$-capillary surfaces $\Si_{t}$, where $\theta\in (0,\pi/2]$. The flow speed is given by
\eq{\label{eq:flow}\dot{x} = \br{\fr{\ip{x+\cos\theta \nu}{e}}{F} - \ip{X_{e}}{\nu}}\nu + \cT,}
where $e$ is a suitable unit vector of $\bbR^{n+1}$ to be specified later, $X_{e}$ is the conformal Killing-field
\eq{\label{eq:Xe}X_{e}=\ip{x}{e}x-\tfr 12(\abs{x}^{2}+1)e,}
$F=F(\ka_{1},\dots,\ka_{n})$ lies in a certain class of functions of the principal curvatures $\ka_{1}\leq \dots\leq\ka_{n}$ and $\cT$ is a tangent part of the velocity which is required to preserve the condition $\del\Si_{t}\sub \bbS^{n}$. Notably, our class of $F$ does contain the curvature quotients $H_{k}/H_{k-1}$, where $H_{k}$ is the normalised $k$-th elementary symmetric polynomial of the principal curvatures. Due to the applicability of this quotient flow to geometric inequalities as discussed below, this choice of $F$ is of particular importance. The case $F=H_{n}/H_{n-1}$ and $\theta=\pi/2$ was treated in the paper \cite{ScheuerWangXia:02/2022} of the second author, Wang and Xia, where it was proven that the flow \eqref{eq:flow} with $\theta=\pi/2$, with strictly convex initial hypersurface, exists for all times and converges to a spherical cap, and in turn a new geometric inequality for quermassintegrals of free boundary (i.e. $\theta=\pi/2$) convex hypersurfaces was discovered. Due to possible failure of convexity preservation, in \cite{ScheuerWangXia:02/2022} it was not possible to generalise this to $k\neq n$, and hence also the geometric inequalities were left open, and indeed are open until today. The first result of our paper is the introduction of a new and more natural type of convexity, which capillary hypersurfaces can satisfy. As it turns out, this type of convexity is preserved under the flow \eqref{eq:flow} for a class of $F$ which contains the quotients $H_{k}/H_{k-1}$ for $1\leq k\leq n$ and in turn we can obtain the corresponding geometric inequalities. The $\theta$-capillarity for $\theta \in (0,\pi/2]$ does not add further problems here and hence we go beyond the free boundary case. As mentioned above, the key to all this is the introduction of a new type of convexity, which is defined as follows, where $g$ denotes the induced metric of $\Si$ and $h,\nu$ are as in \eqref{eq:Gauss formula}.

\begin{defn}\label{defn:horocap convexity}
For $\theta\in (0,\pi/2]$, a convex $\theta$-capillary $C^{2}$-hypersurface $\Si$ of $\bbB^{n+1}$ is called {\it $\theta$-horocap-convex}, if there exists $e\in \bbS^{n}$, which after rotation we may simply assume to be $e_{n+1}$, such that $\Si$ satisfies
\eq{\ip{x}{e}\geq \cos\theta\q\fa x\in \Si,}
and
\eq{(\ip{x-\cos\theta e}{e}) h\geq (1+\ip{e}{\nu})g.}
The hypersurface is called {\it strictly $\theta$-horocap-convex}, if  both inequalities are strict.
\end{defn}

We should mention, that the first condition is redundant in the sense that a convex $\theta$-capillary hypersurface always admits a vector with this property, see \cite[Prop.~2.16]{WengXia:10/2022} and \cite[Lemma~A.1]{HuWeiYangZhou:09/2023}. However, as the second condition also depends on that vector, it is more convenient to state this condition.

This convexity is motivated by the model case of certain spherical caps, which satisfy the equality in this definition, which we call {\it $\theta$-horocaps}, as they touch the hyperplane $\{x^{n+1}=\cos\theta\}$, see \autoref{sec:prelim} for further details. In this sense, it resembles the horosphere convexity of a hypersurface in hyperbolic space and this is why we chose this suggestive terminology.

Now we turn to the assumptions on the curvature function.
 
\begin{assum}\label{assum:F} 
Suppose that $\Ga\sub \bbR^{n}$ is a symmetric, open and convex cone containing the positive cone.
Let $F\in C^{\8}(\Ga)$ be a positive function satisfy:
\enum{
\item $F$ is strictly monotone increasing in each argument;
\item $F$ vanishes on $\del\Ga$;
\item $F$ is homogeneous of degree $1$ with $F(1,\dots,1)=1$;
\item $F$ is concave;
\item $F$ is inverse concave on $\Ga_+$, i.e. the function $\ka = (\ka_{i})\mt 1/F(\ka_{i}^{-1})$ is concave.
}
\end{assum}
\begin{remark}
 For the curvature quotients $F=\frac{H_{k}}{H_{k-1}}$, let $\Ga$ be the Garding cone $$\Ga_k=\{\ka\in\bbR^{n}|H_1>0,\cdots,H_k>0\},$$
 then $F$ satisfies all the conditions in \autoref{assum:F}.
\end{remark}
The first main result is the long-time existence and convergence of the flow \eqref{eq:flow} under the strict $\theta$-horocap-condition.

\begin{thm}\label{thm:flow}
Let $\theta\in (0,\pi/2]$ and $\Si_{0}\sub \bbB^{n+1}$ be strictly $\theta$-horocap-convex. Let $F$ satisfy \autoref{assum:F}. Then the flow \eqref{eq:flow} starting from $\Si_{0}$ exists for all times and converges smoothly to a spherical cap, which is rotationally symmetric around the $e_{n+1}$-axis.
\end{thm}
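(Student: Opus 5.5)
We follow the standard scheme for locally constrained flows with capillary boundary, as in \cite{ScheuerWangXia:02/2022}, and supply a new ingredient: preservation of strict $\theta$-horocap-convexity.

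\textbf{Scalar reduction and short-time existence.} We first write the flow \eqref{eq:flow} as a scalar parabolic equation with an oblique Neumann boundary condition. Since $\Si_0$ is strictly convex and capillary, it is star-shaped with respect to a suitable point. We therefore parametrise $\Si_t$ as a radial graph, or pull it back by a Möbius transformation of $\bbB^{n+1}$ that maps the configuration to a half-space model, with $e=e_{n+1}$ the distinguished direction. In this form the capillary condition becomes a linear-in-gradient Robin/Neumann condition. Parabolicity follows from the monotonicity of $F$ in \autoref{assum:F} together with $\ip{x+\cos\theta\nu}{e}>0$. The latter is implied by $\ip{x}{e}>\cos\theta$ and convexity, because the support-type function $\ip{x+\cos\theta\nu}{e}$ is the capillary analogue of $\ip{x}{e}$. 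Short-time existence then follows from standard theory.

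\textbf{Maximum-principle estimates.} We compute evolution equations for the following quantities:
\begin{enumerate}[(i)]
\item $\ip{x}{e}-\cos\theta$, which stays positive because horocaps and spherical caps around the $e$-axis serve as barriers (static solutions or sub/supersolutions of the flow);
\item the speed $\Phi=\ip{x+\cos\theta\nu}{e}/F - \ip{X_e}{\nu}$, which gives two-sided bounds on $F$;
\item the key tensor $S=\ip{x-\cos\theta e}{e}\,h-(1+\ip{e}{\nu})g$.
\end{enumerate}
The boundary derivatives are computed from the capillary relations. On $\del\Si$, $\mu$ is a principal direction and $\n_\mu h(\mu,\mu)$ can be expressed through $h$ and $\theta$. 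Consequently $\n_\mu$ of the relevant test functions has a good sign at boundary maxima and minima. Here $\theta\le\pi/2$ enters, because it makes the coefficients $\cos\theta\ge0$ and $\sin\theta>0$.

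\textbf{Main obstacle: preserving $S>0$.} We expect this to be the hardest step. We will apply the tensor maximum principle (Andrews' version, including the gradient terms) to $S$, or to the inverse-curvature version $\ip{x-\cos\theta e}{e}\,\ka_i^{-1}\le (1+\ip{e}{\nu})^{-1}$, at an interior null eigenvector. The reaction terms should cancel exactly on horocaps, since these are solitons of the equation for $S$. The remaining terms split into two groups: a concavity term from (iv) and an inverse-concavity term from (v), which combine with the good sign coming from $X_e$ being conformal Killing. If the direct computation does not close, we first try the form $\ka_i^{-1}$ weighted by $\ip{x-\cos\theta e}{e}/(1+\ip{e}{\nu})$, because inverse concavity is naturally suited to it. At the boundary we must show that a first zero of $S$ cannot occur there. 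We plan to do this by showing that $S(\mu,\mu)$ and the mixed terms satisfy a Hopf-type sign, using $\n_\mu h(e_\al,e_\al)=\cot\theta\,\ldots$ type identities.

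\textbf{Regularity and convergence.} Once $S>0$, the principal curvatures are bounded above and below, so the scalar equation is uniformly parabolic. Concavity of $F$ yields Krylov--Safonov and Lieberman oblique-derivative $C^{2,\alpha}$ estimates, and bootstrapping gives long-time existence. For convergence, we use a monotone quantity, a suitable quermassintegral preserved or monotone along the flow via Minkowski-type capillary formulas, or alternatively show that the oscillation of $\Phi$ decays. This shows that the limit satisfies $F\ip{X_e}{\nu}=\ip{x+\cos\theta\nu}{e}$, whose solutions under capillarity are the spherical caps symmetric about the $e_{n+1}$-axis, by an Alexandrov/Minkowski argument. Smooth convergence then follows from the uniform estimates.
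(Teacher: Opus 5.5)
Your plan for short-time existence, the barrier for $\ip{x}{e}-\cos\theta$, and the preservation of $S=\cR>0$ follows the paper's route: a tensor maximum principle with the gradient correction at a null eigenvector, Andrews' inverse-concavity inequality, and the Hopf lemma at $\del\Si$ via the capillary formulas for $\n_\mu h$. Two remarks on that step. The interior contradiction finally rests on $\ip{x-\cos\theta e}{\nu}\le 0$ for convex capillary hypersurfaces, together with the height bound where $\nu=-e$. Also, only inverse concavity (v) is used there, not concavity (iv).

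The genuine gap is your claim that once $S>0$ ``the principal curvatures are bounded above and below''. This is false. The condition $\ip{x-\cos\theta e}{e}\ka_i>1+\ip{e}{\nu}$ is a lower bound only. It degenerates to $\ka_i>0$ wherever $\nu=-e$, and it gives no upper bound on $\ka_n$. Two-sided bounds on $F$ do not repair this: for $F=H_n/H_{n-1}=n/\sum_i\ka_i^{-1}$ one has $\ka_1\le F\le n\ka_1$, while $\ka_n$ is unconstrained. Hence the $C^2$ estimate is missing, and it is exactly the input you need for uniform parabolicity and for the Krylov--Safonov/Lieberman estimates you invoke.

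In the paper this estimate comes from a separate maximum principle for the largest eigenvalue $h^n_n$:
\begin{enumerate}[(i)]
\item concavity (iv) discards the term $F^{kl,pq}h_{kl;n}h_{pq;n}$;
\item the gradient terms are absorbed by completing the square;
\item the height bound $\ip{x}{e}\ge\cos\theta+\al$ leaves the good term $-\frac{\al}{F}\ka_n^2$;
\item at the boundary, one uses the formula for $\n_\mu h_{nn}$ and $h_{\mu\mu;\mu}\le 0$, which follows from $\n_\mu F=0$.
\end{enumerate}
This yields a time-dependent bound and thus long-time existence.

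Convergence also needs a time-independent curvature bound, which the paper obtains from $z=\log h^n_n-\La\ip{x}{e}$. This closes only once $e$ lies uniformly inside $\wh{\del\Si_t}$, which gives $\ip{\mu}{e}>0$ on $\del\Si_t$ and $\abs{x-e}^2\ge\ep$. That requires its own lemma: after a waiting time the flow becomes starshaped about the north pole. The paper proves it by comparison with the flow of a capillary cap inside $\wh\Si_0$, which stays a cap of fixed radius and converges to the cap around the $e_{n+1}$-axis. Nothing in your plan supplies this.

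Your lower bound on $F$ via the speed is also unsupported. One computes $\cL\Phi=\Phi\,(\frac{\ip{x+\cos\theta\nu}{e}}{F^2}F^{ij}h_{ik}h^k_j+\frac{\ip{\nu}{e}}{F}-\ip{x}{e})$. The zero-order coefficient has no sign and involves $1/F$ and $\ka_n/F$, so without prior curvature control the maximum principle gives no bound. The paper instead uses $S>0$ directly on $\{\ip{\nu}{e}\ge-\tfr12\}$, and elsewhere the quotient $w=\ip{x+\cos\theta\nu}{e}/(-\ip{\nu}{e}F)$, in which the $F^{ij}h_{ik}h^k_j$ terms cancel.

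Finally, $\del_t W_0=0$ on the limit flow only shows that the limit consists of spherical caps, not that it solves the stationary equation. Here monotonicity of $W_0$ comes from $F\le H_1$ and the capillary Heintze--Karcher inequality. The symmetry about the $e_{n+1}$-axis needs the extra argument of Scheuer--Wang--Xia and Weng--Xia.
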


The main application is the full set of quermassintegral inequalities for $\theta$-horocap-convex capillary hypersurfaces of the unit ball.

\begin{thm}\label{thm:QM}
Let $\theta\in (0,\pi/2]$ and $\Si\sub \bbB^{n+1}$ be  $\theta$-horocap-convex. Then there holds
\eq{\label{thm:QM-A}W_{k}(\wh\Si)\geq f_{k}(W_{k-1}(\wh\Si)),\q 1\leq k\leq n,}
where $f_{k}$ is a strictly increasing function which gives equality on $\theta$-capillary spherical caps.
The equality case is precisely achieved on $\theta$-capillary spherical caps and flat balls.
\end{thm}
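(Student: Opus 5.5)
We prove the inequality in three stages: the strictly horocap-convex case by the flow of \autoref{thm:flow}, the weakly horocap-convex case by approximation, and the equality case by a separate argument. Throughout, $W_{k}(\wh\Si)$ denotes the capillary quermassintegrals of the region $\wh\Si$ enclosed by $\Si$ and $\bbS^{n}$, normalised as in the capillary setting so that their first variations along a normal speed $f$ satisfy
\eq{\fr{d}{dt}W_{k-1}(\wh\Si_{t}) = c_{n,k}\int_{\Si_{t}} H_{k-1}\, f\, d\mu,\q \fr{d}{dt}W_{k}(\wh\Si_{t}) = c_{n,k}'\int_{\Si_{t}} H_{k}\, f\, d\mu.}
The tangential term $\cT$ does not contribute, because the boundary meets $\bbS^{n}$ at the fixed angle $\theta$.

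\textbf{Strictly horocap-convex case.} Run the flow \eqref{eq:flow} with $F=H_{k}/H_{k-1}$, which is admissible by the Remark following \autoref{assum:F}. The key identity is a capillary Minkowski formula for the conformal Killing field $X_{e}$ from \eqref{eq:Xe}, of the form
\eq{\int_{\Si}H_{k-1}\ip{x+\cos\theta\nu}{e}\,d\mu = \int_{\Si}H_{k}\ip{X_{e}}{\nu}\,d\mu,}
which I would derive by computing $\mrm{div}_{\Si}$ of $(T_{k-1})(X_{e}^{T})$ and treating the boundary term using the angle condition. With this formula, $W_{k-1}$ is preserved along the flow, since
\eq{\int H_{k-1}\br{\fr{\ip{x+\cos\theta\nu}{e}H_{k-1}}{H_{k}}-\ip{X_{e}}{\nu}}=0.}
We also expect $W_{k}$ to be monotone. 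Indeed,
\eq{\int H_{k}f = \int \fr{\ip{x+\cos\theta\nu}{e}H_{k-1}^{2}}{H_{k}}\cdot\fr{H_{k}}{H_{k-1}}\cdots,}
and by the Newton–Maclaurin inequality $H_{k-1}^{2}\ge H_{k}H_{k-2}$ together with $\ip{x+\cos\theta\nu}{e}\ge0$ (which holds by the first condition in \autoref{defn:horocap convexity}), one obtains $\int H_{k}f\le0$ after applying the Minkowski formula at level $k-1$. By \autoref{thm:flow} the flow converges to a $\theta$-capillary cap $C$ with the same $W_{k-1}$. Hence $W_{k}(\wh\Si)\ge W_{k}(C)=f_{k}(W_{k-1}(C))=f_{k}(W_{k-1}(\wh\Si))$, where $f_{k}$ is defined through the one-parameter family of capillary caps. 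That $f_{k}$ is strictly increasing follows by computing both quermassintegrals along this family.

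\textbf{Weakly horocap-convex case.} I would approximate $\Si$ by strictly $\theta$-horocap-convex capillary hypersurfaces $\Si_{\ep}$ converging in $C^{2}$. One construction is a short-time flow, for instance the flow itself or a mean-curvature-type capillary flow, provided it makes the second inequality in \autoref{defn:horocap convexity} strict instantly by a strong maximum principle. An alternative is to deform $\Si$ through a family of conformal transformations of the ball, or dilations towards horocaps, which improve the inequality. The quermassintegrals depend continuously on $\Si$, so the inequality passes to the limit.

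\textbf{Equality case — the main obstacle.} The approximation step loses rigidity, so equality must be handled directly. Suppose equality holds for $\Si$. I would show that the first variation of $W_{k}-f_{k}(W_{k-1})$ vanishes for all admissible capillary variations preserving horocap-convexity. Where $\Si$ is strictly horocap-convex locally, arbitrary local normal variations are allowed, and the Euler–Lagrange equation gives $H_{k}/H_{k-1}=\mrm{const}$ there. On the region where the horocap-convex inequality is an equality, $\Si$ is locally a horocap, that is, umbilic with a specific curvature. A continuation argument using analyticity, or uniqueness for the ODE along curves, combined with Alexandrov-type rigidity for constant $H_{k}/H_{k-1}$ capillary hypersurfaces then shows that $\Si$ is a spherical cap or a flat ball. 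The delicate point is justifying the one-sided variations near the degenerate set. I would first try applying the flow for a short time: equality forces $\fr{d}{dt}W_{k}=0$ initially, so $H_{k-1}^{2}=H_{k}H_{k-2}$ pointwise wherever $\ip{x+\cos\theta\nu}{e}>0$, which makes $\Si$ umbilic and hence a cap.
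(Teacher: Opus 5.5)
Your plan for the inequality follows the paper: the flow \eqref{eq:flow} with $F=H_k/H_{k-1}$ in the strict case, and capillary mean curvature flow for approximation. The equality case, however, has a genuine gap.

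\textbf{The Euler--Lagrange route.} On the set where strict horocap-convexity fails you only know that $\cR$ has a kernel, i.e.\ $\ip{x-\cos\theta e}{e}\ka_1=1+\ip{e}{\nu}$ for the \emph{smallest} principal curvature. This forces neither umbilicity nor a horocap piece. Nothing prevents this set from having interior, and $\Si$ is only $C^2$. So your Euler--Lagrange equation is unavailable there, and there is nothing to continue by analyticity. This is precisely the obstruction the paper describes in its closing remark.

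\textbf{The short-time flow fallback.} Running \eqref{eq:flow} from $\Si$ itself also fails. A weakly horocap-convex $\Si$ may have points with $\nu=-e$ and $h=0$; a local profile $x^{n+1}=c+\abs{y}^4$ with $c>\cos\theta$ satisfies the definition. There $F=0$ and the speed $\ip{x+\cos\theta\nu}{e}/F$ is undefined, so there is no flow from $\Si$ and no derivative at $t=0$.

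\textbf{The missing idea} is the paper's limiting argument. Run \eqref{eq:flow} from the MCF approximants $\Si^\ep$ of \autoref{thm:MCF approx}. Prove the $\ep$-independent bound $1/F\le C/\rt{t}$ of \autoref{lem:est-F}, via the maximum principle for $t\gamma^2$ with $\gamma=\ip{x+\cos\theta\nu}{e}/((2-\abs{x}^2)F)$. Together with the $C^2$ bounds, this lets you extract a limit flow $(\Si_t)_{t>0}$. Since $W_k$ is constant and $W_{k-1}$ nondecreasing, the weak inequality for $\Si_t$ and strict monotonicity of $f_k$ give $W_{k-1}(\wh\Si_t)\equiv W_{k-1}(\wh\Si)$. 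This forces equality in Newton--Maclaurin (Heintze--Karcher if $k=1$), so every $\Si_t$ is a cap of one fixed radius. Integrating the speed bound gives $\dist(\Si,\Si_t)\le C\rt{t}$, so $\Si$ is that cap.

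\textbf{Strict case: the two functionals are swapped.} Your own Minkowski identity gives $\int H_kf=0$, so it is $W_k$ that is preserved. On the other hand, $\int H_{k-1}f=\int\bigl(\fr{H_{k-1}^2}{H_k}\ip{x+\cos\theta\nu}{e}-H_{k-1}\ip{X_e}{\nu}\bigr)\ge0$ by Newton--Maclaurin plus the identity at level $k-1$; the vanishing you claim for it is false. The correct conclusion is $W_k(\wh\Si)=W_k(C)=f_k(W_{k-1}(C))\ge f_k(W_{k-1}(\wh\Si))$, which is where monotonicity of $f_k$ enters. For $k=1$ there is no $H_{k-2}$, and monotonicity of $W_0$ requires the capillary Heintze--Karcher inequality.

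\textbf{Weak case: the approximation is the real content, not a proviso.} The flow \eqref{eq:flow} cannot be used for the reason above. For MCF, preservation and instant strictness of $\cR$ must actually be proved. The paper uses the evolution in \autoref{R-mcf} and works with $\cR+\ep g$ first. It shows that at a null eigenvector the term $-2\ip{x^k}{e}h^1_{1;k}$ cancels exactly against the second-order condition (via $\cR_{11;l}=0$ and Codazzi), leaving $\abs{h}^2>0$. It handles $\del\Si$ with the Hopf lemma.
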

\begin{remark}
    The inequalities \eqref{thm:QM-A} for strictly $\theta$-horocap-convex capillary hypersurfaces follow directly from \autoref{thm:flow}. We then employ the capillary mean curvature flow to prove that a weakly $\theta$-horocap-convex capillary hypersurfaces can be approximated by a sequence of strictly $\theta$-horocap-convex capillary hypersurfaces (see \autoref{inequality} for details).
\end{remark}
The functionals $W_k$ are modifications of the classical quermassintegrals for closed hypersurfaces, adjusted by a term which balances the contribution coming from $\theta$-capillarity. It contains the quermassintegrals of $\del\Si\sub \bbS^{n}$ and takes the precise value of $\theta$ into account. The precise definitions were introduced in \cite[p.~8853]{WengXia:10/2022} as follows:
\begin{align*}
	W_{0,\theta}(\widehat{\Sigma})=|\widehat{\Sigma}|,\ 
	W_{1,\theta}(\widehat{\Sigma})=\tfrac{1}{n+1}\left(|\Sigma|-\cos\theta W_0^{\mathbb{S}^n}(\widehat{\partial\Sigma})\right),
\end{align*}
\begin{align*}
    W_{n+1,\theta}(\widehat{\Sigma})=\frac{1}{n+1}\left[\int_{\Sigma}H_{n}dA-\sum_{\ell=0}^{n-1}(-1)^{k+\ell}\binom{n}{\ell}\cos^{n-1-\ell}\theta\sin^\ell\theta W_\ell^{\mathbb{S}^n}(\widehat{\partial\Sigma})\right],
\end{align*}
and for $1\leq k\leq n-1$,
\begin{align*}
	\begin{aligned}
		&W_{k+1,\theta}(\widehat{\Sigma})=\frac{1}{n+1}\left\{\int_{\Sigma}H_{k}dA-\cos\theta\sin^k\theta W_k^{\mathbb{S}^n}(\widehat{\partial\Sigma})\right.  \\
		&\quad \quad \quad \left.-\sum_{\ell=0}^{k-1}\frac{(-1)^{k+\ell}}{n-\ell}\binom{k}{\ell}\left[(n-k)\cos^2\theta+k-\ell\right]\cos^{k-1-\ell}\theta\sin^\ell\theta W_\ell^{\mathbb{S}^n}(\widehat{\partial\Sigma})\right\},
	\end{aligned}
\end{align*}
where  $\widehat{\Sigma}$ is the domain in $\bbB^{n+1}$ enclosed by $\Si$, $\widehat{\partial\Sigma}$ is the convex body in $\bbS^{n}$ bounded by $\partial\Sigma$  and
$W_{k}^{\mathbb{S}^n}(\widehat{\partial \Sigma}), 0\leq k\leq n$, are the quermassintegrals of $\widehat{\partial \Sigma}$ in $\mathbb{S}^n$.
It is important to note that in the case $\theta = \pi/2$, the functionals reduce to those defined in \cite{ScheuerWangXia:02/2022}. Hence, \autoref{thm:QM} gives the first convexity condition under which one can deduce the full set of quermassintegral inequalities for free boundary hypersurfaces of the unit ball. The key property of the $W_{k}$ and the main reason for the precise form of their definition is that, under a variation of the form
\eq{\dot{x}^{\perp} = f\nu,}
their evolution is given by
\eq{\label{general variation}\del_{t}W_{k}(\wh\Si_{t}) = \fr{n+1-k}{n+1}\int_{\Si_{t}}H_{k}f,\q 0\leq k\leq n,}
as proven in \cite[Thm.~2.10]{WengXia:10/2022}.

\subsection*{Some history of the problem}

Inequalities for quermassintegrals of convex bodies, which in our formulation correspond to the case of closed convex hypersurfaces $\Si\sub \bbR^{n+1}$, are a classical topic in convex geometry. In this case, the functionals are given by
\eq{W_{k}(\wh\Si) = \fr{1}{n+1}\int_{\Si}H_{k-1},}
and their sharp inequalities are of the same form as in \eqref{thm:QM-A}, with the functions $f_{k}$ explicitly given by monomials with powers that ensure the scale invariance of the inequalities. The case $k=1$ is the isoperimetric inequality, and the case $k=2$ the Minkowski inequality. There are classical methods based on the Brunn-Minkowski theory to prove the inequalities in this setting, see the classical book on convex geometry by Schneider \cite{Schneider:/2014}. Over the course of the past decades, the use of curvature flows has become increasingly prominent for deriving generalizations of the quermassintegral inequalities in various directions, probably starting with the paper of Guan/Li \cite{GuanLi:08/2009}, in which the convexity assumption was relaxed to starshaped and $k$-convex (i.e. $H_{i}>0$ for all $i\leq k$). Modifications of the ambient space have also become possible, as seen in the works of Makowski and the second author \cite{MakowskiScheuer:11/2016} and Wei/Xiong \cite{WeiXiong:/2015}, which consider versions in the unit sphere, as well as in the work of  Wang/Xia \cite{WangXia:07/2014} for horo-convex hypersurfaces in hyperbolic space. Notably, in the closed and convex case, the full set of inequalities \eqref{thm:QM-A} is still unknown in both the sphere and hyperbolic space. We do not attempt to list other partial results here, but mention that the complete set of these inequalities was recently obtained by the authors in the class of horo-convex hypersurfaces of the unit sphere \cite{PanScheuer:12/2025}. These results also provide the foundation for the ideas developed in the present paper.
 
Instead, we focus on the case of locally constrained flows for hypersurfaces with boundary. Locally constrained flows of mean curvature type for closed hypersurfaces were first introduced by Guan/Li \cite{GuanLi:/2015} and read, for simplicity stated here only in the Euclidean space,
\eq{\label{eq:GL}\dot{x} = (1 - \ip{x}{\nu}H_{1})\nu,}
with many follow up developments such as those in \cite{GuanLiWang:/2019,LambertScheuer:03/2021,Scheuer:/2022,ScheuerXia:11/2019}. In recent years, the case of flows with free or capillary boundary conditions have received plenty of attention.

 It started with the free boundary flow in the unit ball along the evolution equation
 \eq{\dot{x} = (\ip{x}{e} - \ip{X_{e}}{\nu}H_{1})\nu}
 by Wang/Xia \cite{WangXia:/2022}, who proved smooth convergence to a spherical cap in the starshaped class.  See also \cite{MeiWeng:06/2023} for a space form version, and \cite{HuWeiYangZhou:09/2023,WangWeng:/2020} for the $\theta$-capillary version of this. Here $X_{e}$ is the conformal Killing field from \eqref{eq:Xe}. Subsequently, a fully nonlinear version of this flow was treated by the second author together with Wang and Xia \cite{ScheuerWangXia:02/2022}. The precise flow is
 \eq{\label{eq:SWX flow}\dot{x} = \br{\fr{\ip{x}{e}}{\fr{H_{n}}{H_{n-1}}}-\ip{X_{e}}{\nu}}\nu,}
 which is the special case $\theta = \pi/2$, also called the {\it free boundary} case, of the present paper with $F=H_n/H_{n-1}$. In \cite{ScheuerWangXia:02/2022}, we proved convergence of a convex free boundary initial hypersurface to a spherical cap, and thereby proved \eqref{thm:QM-A} in the case $k=n$. The restriction to $k=n$ stems from a major technical difficulty concerning the curvature flows: the multiplicative coupling of the curvature function $F = H_{k}/H_{k-1}$ with the term $\ip{x}{e}$ introduces a term in the evolution of the second fundamental form, for which 
no solution has yet been found to preserve convexity. Indeed it is, and so far remains, unknown whether  this flow preserves the convexity for general $F$, even in the free boundary case. In case $k=n$, this preservation comes for free. Similar issues arise for the corresponding flow of hypersurfaces in the unit sphere, even in the closed case. Indeed, the boundary conditions are not the hard part here, but the interior equation. There is one special situation in which a similar issue was overcome. In \cite{HuLiWei:04/2022}, Hu/Li/Wei proved the preservation of horo-convexity for the flow
 \eq{\dot{x} = \br{\fr{\cosh r}{F} - \ip{\sinh(r)\del_{r}}{\nu}}\nu,}
 where $r$ is the radial distance function from the closed hypersurface to the origin of the hyperbolic space. In turn, they proved that a stronger type of convexity is preserved and that the complicated term can be dealt with. With this result, they reproved the full set of quermassintegral inequalities of Wang/Xia \cite{WangXia:07/2014}, which were proved via a non-local flow in their paper. For the free or capillary boundary case, so far no one has been able to deal with flows which go beyond the choice $F=H_{n}/H_{n-1}$. The same issue forced Weng/Xia \cite{WengXia:10/2022} to use $F=H_{n}/H_{n-1}$ in the capillary version of \eqref{eq:SWX flow}. Let us also briefly mention the case of capillary surfaces in a half-space, which has undergone a similar development, see for example \cite{HuWeiYangZhou:10/2024,MeiWeng:01/2025,WangWengXia:02/2024,WangWengXia:08/2024}. However, note that concerning the above technical difficulty, the half-space case is less challenging, because here the multiplicative coupling involves the curvature function and the normal $\nu$, which yields an extra term in the evolution of the second fundamental form that does not directly hamper the application of the maximum principle. 
 
 Hence, in this case the set of inequalities is complete, and we focus on the case of the unit ball. Motivated by the approach of Hu/Li/Wei \cite{HuLiWei:04/2022}, we search for a new type of convexity and try to mimic the horo-sphere convexity of hypersurfaces in hyperbolic space. The outcome was what we called $\theta$-horocap-convexity in \autoref{defn:horocap convexity}. The key observation is that this convexity is preserved for a sufficiently large class of $F$, which allows us to deduce the full set of inequalities \eqref{thm:QM-A}. The rest of the paper is devoted to a detailed proof.

\subsection*{Funding}
This work was funded by the German Research Foundation (Deutsche Forschungsgemeinschaft,
DFG) through the project ``Curvature flows with local and nonlocal sources'', SCHE 1879/4-1.

\section{Preliminaries and horocap-convexity}\label{sec:prelim}

\subsection*{Notation}
In this section, we lay out the employed notation and conventions used in this paper. We consider smooth and convex hypersurfaces $\Si$ of the Euclidean unit ball $\bbB^{n+1}$ with metric $g$ and Levi-Civita connection $\n$ induced from the ambient Euclidean geometry with connection $D$. The second fundamental form $h$ and normal $\nu$ are chosen such that in the Gaussian formula \eqref{eq:Gauss formula} $h$ is non-negative definite. The hypersurface $\Si$, or also the time-dependent hypersurface $\Si_{t}$, will usually be embedded by maps $x(t,\cdot)$, which give rise to coordinate frames 
\eq{x_{i}:=\del_{i}x, \q 1\leq i\leq n.}
We use coordinate based notation, with common abbreviations
\eq{g_{ij} = g(x_{i},x_{j}),\q h_{ij}= h(x_{i},x_{j}),\q T_{ij;k} = (\n_{x_{k}}T)_{ij},}
for tensor fields $T$. We are raising and lowering indices with respect to the induced metric $g_{ij}$, e.g. $h^{i}_{j} = g^{ik}h_{kj}$, where $g^{ij}$ are the components of the inverse of $g$.

For our curvature function we use the standard notation, that it can be understood as a function of the Weingarten operator, $F=F(h^i_j)$, or as a function of the metric and the second fundamental form, $F=F(h_{ij},g_{ij})$. Then there holds, see \cite[Sec.~2.1]{Gerhardt:/2006},
\eq{F^{ij} := \fr{\del F}{\del h_{ij}} = \fr{\del F}{\del h^k_i}g^{kj}=:F^i_k g^{kj}.}

\subsection*{Horocap-convexity}
Our definition of horocap-convexity is geometrically motivated:

\begin{lemma}\label{horocap}
Let $\theta\in (0,\pi/2]$ and $\Si\sub \bbB^{n+1}$ be a $\theta$-capillary spherical cap with the properties
\eq{\fa x\in \Si\cn \ip{x}{e}\geq \cos\theta \q\mbox{and}\q \exists x_{0}\in \partial\Si\cn \ip{x_{0}}{e} = \cos\theta.} Then there holds
\eq{\ip{x-\cos\theta e}{e}h= (1+\ip{\nu}{e})g.}
\end{lemma}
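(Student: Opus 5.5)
The plan is to exploit that a spherical cap is umbilic, which reduces the tensor identity to a scalar identity, and then to locate the center of the cap from the capillary condition at the extremal boundary point $x_0$. Write $\Si\sub\partial B_r(c)$. With the sign convention of \eqref{eq:Gauss formula} and $h\geq 0$, the normal is the outward sphere normal $\nu=(x-c)/r$ and $h=\tfr 1r g$. Hence the claim $\ip{x-\cos\theta e}{e}h=(1+\ip{\nu}{e})g$ is equivalent to
\eq{\ip{x}{e}-\cos\theta = r+\ip{x-c}{e}\q\fa x\in\Si.}
This in turn is equivalent to the single scalar relation $\ip{c}{e}=r+\cos\theta$. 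So it remains to show that the lowest point of the full sphere in direction $e$ sits at height $\cos\theta$. (For a flat ball, $h=0$, and the same argument below should give $\nu\equiv -e$, which again yields the identity.)

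To find $\ip{c}{e}$ I would use the first-order condition at $x_0$. Since $\ip{\cdot}{e}|_{\Si}$ attains its minimum at the boundary point $x_0$, its tangential gradient there is tangent to $\Si$, vanishes along $\del\Si$, and points inward (or is zero). Hence $e=-a\mu+b\nu$ at $x_0$ with $a\geq0$ and $a^2+b^2=1$. The capillary relations give $\ip{x_0}{\mu}=\ip{\bar N}{\mu}=\sin\theta$ and $\ip{x_0}{\nu}=-\cos\theta$. Therefore
\eq{\cos\theta=\ip{x_0}{e}=-a\sin\theta-b\cos\theta,}
that is, $a\sin\theta=-\cos\theta(1+b)$. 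Since $a\geq 0$, $b\geq -1$ and $\sin\theta>0$, the left side is $\geq0$ and the right side is $\leq0$, so both vanish. If $\theta<\pi/2$ this forces $b=-1$ and $a=0$. If $\theta=\pi/2$ it forces $a=0$ and $b=\pm1$.

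The main obstacle is excluding $b=+1$ in the free boundary case $\theta=\pi/2$. If $\nu(x_0)=e$, then $\ip{x}{e}=\ip{c}{e}+r\ip{\nu}{e}$ on the sphere is maximised at $x_0$. A point of $\Si$ where $\ip{\cdot}{e}$ is simultaneously maximal over the sphere and minimal over $\Si$ forces this function to be constant on $\Si$, which is impossible for a nondegenerate cap. (For the flat ball case the analogous degenerate option is ruled out similarly, or by orientation.)

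So $\nu(x_0)=-e$, and therefore $c=x_0-r\nu(x_0)=x_0+re$. This gives $\ip{c}{e}=\cos\theta+r$, and plugging into the scalar identity above completes the proof.
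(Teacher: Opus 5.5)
Your proof is correct and follows the paper's route: umbilicity ($\nu=(x-c)/r$, $h=\tfr1r g$) reduces the claim to $\ip{c}{e}=r+\cos\theta$. Your first-order analysis at $x_0$ forces $\nu(x_0)=-e$ (for $\theta=\pi/2$ the option $\nu(x_0)=e$ is excluded by your max/min argument), and this supplies the justification of the center identity that the paper only asserts. The one imprecise point is your parenthetical on flat balls, which lie outside the lemma's scope anyway: for $\theta=\pi/2$ the flat disk $\{\ip{x}{e}=0\}$ with $\nu=e$ satisfies both hypotheses but violates the identity. So there the option $b=+1$ is ruled out neither by the constancy argument nor by $h\geq 0$, only by an additional orientation convention.
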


\pf{
For a unit vector $e_0\in\bbR^{n+1}$, the $\theta$-capillary spherical cap of radius $R$ around $e_0$ is given by
\eq{C_{\theta,R}(e_0)=\{x\in\bbB^{n+1}: \vert x-\hat e_0\vert=R\}}
where $\hat{e}_0=\sqrt{R^2+2R\cos\theta+1}e_0$.
Since  $\ip{x}{e}\geq \cos\theta$ and there exists a boundary point $x_0$ such that $\ip{x_{0}}{e} = \cos\theta$, the identity $\ip{\hat{e}_0}{e} = \cos\theta+R$ follows. 
On all spherical caps, there holds $h_{ij} = \fr1R g_{ij}$ and hence 
\eq{(1+\ip{\nu}{e})g_{ij} = R\br{1+\ip{\fr{x-\hat e_0}{R}}{e}}h_{ij} = (\ip{x}{e}-\cos\theta)h_{ij}.}
 }

We also need a geometric lemma.

\begin{lemma}\label{lemma:support function estimate}
		For $\theta\in(0,\pi/2]$ and any convex $\theta$-capillary $C^2$-hypersurface $\Sigma\sub \bbB^{n+1}$, we have
\eq{\ip{x-\cos\theta e}{\nu}\leq 0.}
\end{lemma}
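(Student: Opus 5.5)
The plan is to reduce the claim to an estimate for the support function $\ip{x}{\nu}$ alone. If we know
\eq{\ip{x}{\nu}\leq -\cos\theta \q\text{on } \Si,}
then, since $\abs{e}=1$ and $\cos\theta\geq 0$,
\eq{\ip{x-\cos\theta e}{\nu}\leq -\cos\theta-\cos\theta\ip{e}{\nu} = -\cos\theta\br{1+\ip{e}{\nu}}\leq 0.}
So the whole work is to prove $\varphi:=\ip{x}{\nu}\leq -\cos\theta$. This reduction also makes the statement independent of the choice of the unit vector $e$.

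The first step is the boundary. On $\del\Si$ we have $x=\bar N=\sin\theta\,\mu-\cos\theta\,\nu$ by the angle relation from the introduction. Hence $\varphi=-\cos\theta$ on $\del\Si$, and the inequality holds with equality there.

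The second step is the interior, where I would use the maximum principle. I would first work with strictly convex $\Si$ and recover the weakly convex case by approximation or by a continuity argument. The standard computations give
\eq{\varphi_{;i}=h_i^k\ip{x}{x_k},\q \varphi_{;ij}=h_{ij;k}\ip{x}{x^k}+h_{ij}-h_i^kh_{kj}\varphi.}
Suppose $\max_\Si\varphi>-\cos\theta$. Then the maximum is attained at an interior point $x_0$. There $\n\varphi=0$, so strict convexity forces $x_0^T=0$, i.e. $x_0=\varphi\nu$. The Hessian condition then reads $h-\varphi h^2\leq 0$. If $\varphi(x_0)\leq 0$, the left-hand side is $\geq h>0$, which is a contradiction. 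Note this case covers $\varphi(x_0)\in(-\cos\theta,0]$.

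The main obstacle is the remaining case $\varphi(x_0)>0$. I would exclude it geometrically, using convexity globally rather than the local maximum principle. Convexity means $\Si$ lies on the inner side of its tangent hyperplane at $x_0$, so $\ip{y}{\nu(x_0)}\leq\varphi(x_0)$ for all $y\in\Si$, in particular on $\del\Si$. On the other hand, I expect that $\varphi(x_0)>0$ with $x_0$ parallel to $\nu(x_0)$ is incompatible with the capillary angle condition. The idea is to follow the sign of $\varphi$ along a curve in $\Si$ from $x_0$ to $\del\Si$, where $\varphi=-\cos\theta\leq 0$.

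An alternative I would try first is to interpret $\varphi(x)=\max_{y\in\wh\Si}\ip{y}{\nu(x)}$ via convexity of $\wh\Si$. The set $\wh\Si$ is the intersection of the ball with the convex side of $\Si$. One would then compare with the supporting hyperplanes at boundary points, where the value $-\cos\theta$ is known. A monotonicity of the support function along the Gauss image, as the normal moves from an interior point to a boundary normal, would give the bound directly.
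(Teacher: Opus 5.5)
Your reduction to $\varphi=\langle x,\nu\rangle\le-\cos\theta$ is exactly what the paper proves; its final step $-\cos\theta(1+\langle e,\nu\rangle)\le 0$ is left implicit there. Your boundary value $\varphi=-\cos\theta$ and your Hessian argument excluding interior maxima with $\varphi(x_0)\le 0$ are also correct.

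\textbf{The gap.} The gap is the case $\varphi(x_0)>0$, i.e.\ an interior critical point where $\nu(x_0)=x_0/|x_0|$ points radially outward. You flag this case as the main obstacle but do not resolve it, and none of your sketched ideas closes it:
\begin{enumerate}
\item[(a)] The supporting-hyperplane inequality $\langle y,\nu(x_0)\rangle\le |x_0|$ only says that $\partial\Sigma$ avoids the spherical cap $\{y\in\mathbb{S}^n:\langle y,x_0/|x_0|\rangle>|x_0|\}$. That contradicts nothing.
\item[(b)] Following $\varphi$ along a curve to $\partial\Sigma$ only shows that $\varphi$ drops from a positive value to $-\cos\theta\le 0$. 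That is not a contradiction either.
\item[(c)] Trying to get ``monotonicity'' of the support function $s$ on the Gauss image via the maximum principle leads back to the same point. There $\nabla^2 s+s\,\sigma>0$, so an interior maximum only forces $s>0$. This is the same dichotomy your Hessian computation produced.
\end{enumerate}
Local information at $x_0$ cannot exclude this configuration. You need a global structural fact about convex capillary hypersurfaces.

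\textbf{The missing fact.} The paper takes it from Weng--Xia (Prop.~2.16): for $\theta\in(0,\pi/2]$ and $\Sigma$ convex (strictly, in the step where it is used), there is $e_0\in\widehat{\partial\Sigma}$ such that, at your point $x_0$, $\langle x_0,e_0\rangle\ge\cos\theta$ and $\langle \nu(x_0),e_0\rangle<0$.

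\textbf{How it closes the argument.} Since $x_0=\varphi(x_0)\nu(x_0)$, you get $\varphi(x_0)\langle\nu(x_0),e_0\rangle=\langle x_0,e_0\rangle\ge\cos\theta\ge 0$. This forces $\varphi(x_0)\le 0$. In fact $x_0=-|x_0|\nu(x_0)$, and hence $\varphi(x_0)=-|x_0|\le-\langle x_0,e_0\rangle\le-\cos\theta$. This handles every interior critical point at once, so with this fact your Hessian step becomes unnecessary. The weakly convex case then follows by approximation, as you planned.
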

\begin{proof}
	 Along the boundary $\partial\Sigma$, the $\theta$-capillary boundary condition means $\langle x,\nu\rangle=-\cos\theta$. If $\langle x,\nu\rangle$ attains its maximum at some interior point $p\in\Sigma$, then 
	\begin{align*}
		\nabla_i\langle x,\nu\rangle_{p}=\kappa_i\langle x,e_i\rangle_p.
	\end{align*}
	If $\Sigma$ is strictly convex, $x\parallel\nu$ follows at $p$. Next, by
	\cite[Prop 2.16]{WengXia:10/2022}, there exists a point $e_0\in\widehat{\partial\Sigma}$ such that $\langle p,e_0\rangle\geq\cos\theta\geq0$ and $\langle\nu(p),e_0\rangle<0$. Therefore, $x/\abs{x}=-\nu$ holds, and hence
	\begin{align*}
		\langle p,\nu\rangle=-\vert p\vert\leq-\cos\theta.
	\end{align*}
	This shows that  $\langle x,\nu\rangle\leq -\cos\theta$ when $\Sigma$ is  strictly convex. An approximation argument (see \cite[Appendix A]{HuWeiYangZhou:09/2023}) then implies that the same inequality remains valid for a general convex hypersurface  $\Sigma$.
\end{proof}

\section{Preservation of horocap-convexity}

\subsection*{Short-time existence and the route to long-time existence}
The maximal time of smooth existence $T>0$ of the flow \eqref{eq:flow} with the given boundary condition is guaranteed by parabolic existence theory with oblique boundary conditions. To transfer the geometric problem into this setting, we  employ the conformal transformation utilised in \cite[Sec.~3.2]{WangWeng:/2020} from $\bbB^{n+1}$ to the open half space $\bbR^{n+1}_{+}$. For this purpose, we note that the initial hypersurface $\Si_0$ is strictly $\theta$-horocap-convex and in particular strictly convex. Hence we can find some $e_0\in \widehat{\del \Si_0}$, such that $\Si_0$ lies in the hemisphere determined by $e_0$ and is starshaped with respect to the conformal Killing field $X_{e_0}$. We can transform the unit ball conformally to $\bbR^{n+1}_{+}$, where $\Si_0$ is starshaped around the origin. Then, in \cite[Equ.~(3.5)]{WangWeng:/2020}, a scalar parabolic equation with oblique boundary condition is deduced:
	\begin{equation}\label{eq:scalar PDE}
		\left\{\begin{aligned}
			&\partial_tu=G(D^{2}u,Du,u,\cdot), \quad \text{in}\  \bbS^{n}_+\times(0,T),\\
			&\nabla_{-e_0}u=\cos\theta\sqrt{1+\vert D u\vert^2},  \quad \text{in}\  \partial\bbS^{n}_+\times[0,T),\\
            &u(0,\cdot) = u_0.
		\end{aligned}
		\right.
	\end{equation}
	The short-time existence in the class $C^{1+\fr{\al}{2},2+\al}([0,T)\x \bbB^n)\cap C^{\8}((0,T)\x \bbB^n)$ can be deduced from \cite[Thm.~14.23]{Lieberman:/1998}, also compare \cite[Thm.~6]{Uraltseva:06/1994}.
    
This also means that $F>0$ holds up to $T$, and in the following, we prove that the strict horocap-convexity with respect to some $e$ is preserved along the flow \eqref{eq:flow}. This requires that several properties have to be preserved along the flow: namely,  there is no interior touching of the flow with $x\in \bbS^{n}$, $\ip{x}{e}>\cos\theta$ and the condition
\eq{\cR:=\ip{x-\cos\theta e}{e}h - (1+\ip{e}{\nu})g>0.}

Once this is accomplished, we prove a lower bound on $F$, as well as a time dependent bound on the full second fundamental form. Long time existence will then follow from a short additional argument. To make this plan work,
 we need a few evolution equations, for which we define the operator
\eq{\cL = \del_{t} - \fr{\ip{x+\cos\theta\nu}{e}}{F^{2}}F^{ij}\n^{2}_{ij}-\ip{X_{e}-\fr{\cos\theta}{F}e+\cT}{\n}.}

\subsection*{Evolution equations}

\begin{lemma}
Along the flow \eqref{eq:flow}, there hold
\eq{\label{eq:ev height}\cL\ip{x}{e} = \fr{2\ip{x+\cos\theta\nu}{e}}{F}\ip{\nu}{e}+\fr{\cos\theta}{F}\abs{e^{\top}}^{2}-\ip{x}{e}^{2}+\tfr 12(\abs{x}^{2}+1),}
where ${}^{\top}$ denotes projection onto the tangent space of $\Si_{t}$,
\eq{\label{eq:ev dual height} \cL\ip{\nu}{e}= \fr{\ip{x+\cos\theta\nu}{e}}{F^{2}}F^{ij}h^{k}_{i}h_{kj}\ip{\nu}{e}- \fr{\abs{e^{\top}}^{2}}{F}+\ip{x}{\nu} - \ip{x}{e}\ip{\nu}{e},}
\eq{\del_t g_{ij}=\cL g_{ij}=2\br{\fr{\ip{x+\cos\theta\nu}{e}}{F}- \ip{X_{e}}{\nu}}h_{ij} + \cT_{j;i} + \cT_{i;j},}
\eq{\label{eq:ev h}\cL h_{ij}&= h^{k}_{j}\cT_{k;i} + h^{k}_{i}\cT_{k;j}+  \fr{\ip{x_{j} + \cos\theta h^{k}_{j}x_{k}}{e}}{F^{2}}F_{;i} + \fr{\langle x_{i} + \cos\theta h^{k}_{i}x_{k},{e}\rangle}{F^{2}}F_{;j}- \fr{2\ip{x+\cos\theta\nu}{e}}{F^{3}}F_{;i}F_{;j}\\
				&\hp{=}+ \fr{\ip{x+\cos\theta\nu}{e}}{F^{2}}F^{kl,pq}h_{kl;i}h_{pq;j}+ \fr{\ip{x+\cos\theta\nu}{e}}{F^{2}}F^{kl}h^{r}_{k}h_{rl}h_{ij}+\fr{\ip{\nu}{e}}{F}h_{ij} + \fr{\cos\theta \ip{\nu}{e}}{F}h^{k}_{i}h_{kj}   \\
				&\hp{=} + \ip{x}{e}h_{ij} - 2\ip{X_{e}}{\nu}h_{i}^{k}h_{kj} - \ip{e}{\nu}g_{ij}\\
				&\hp{=} + h^{k}_{j}(\ip{x_{i}}{e}\ip{x}{x_{k}}-\ip{x_{i}}{x}\ip{e}{x_{k}}) +  h^{k}_{i}(\ip{x_{j}}{e}\ip{x}{x_{k}}-\ip{x_{j}}{x}\ip{e}{x_{k}}).}
\end{lemma}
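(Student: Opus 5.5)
The plan is a direct computation from the flow equation, written as $\dot x = f\nu + \cT$ with normal speed
\eq{f = \fr{\ip{x+\cos\theta\nu}{e}}{F} - \ip{X_{e}}{\nu}.}
I start from the standard variation formulas for a hypersurface moving with velocity $f\nu+\cT$:
\eq{\del_t g_{ij} = 2fh_{ij}+\cT_{i;j}+\cT_{j;i},\q \del_t\nu = -\n f + h(\cT,\cdot)^\sharp\text{-terms},\q \del_t h_{ij} = -f_{;ij}+fh^k_ih_{kj}+\cT\text{-terms}.}
Here the sign conventions are fixed by \eqref{eq:Gauss formula} and the Weingarten equation $D_{x_i}\nu = h^k_i x_k$. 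The tangential contributions are Lie derivatives, $\cL_{\cT}g$ and $\cL_{\cT}h$. They produce $\cT^k h_{ij;k}+h^k_j\cT_{k;i}+h^k_i\cT_{k;j}$ in the evolution of $h$. The transport term $\cT^k\n_k$ is absorbed into $\cL$, which explains the $\cT$ in the gradient part of $\cL$. The metric equation then needs no further work; note that $g$ is parallel, so $\cL g_{ij}=\del_t g_{ij}$.

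For the height $\ip{x}{e}$ and the dual height $\ip{\nu}{e}$, I compute $\del_t$ directly. In the first case this gives $f\ip{\nu}{e}+\ip{\cT}{e}$. In the second it gives $-\ip{\n f}{e^\top}$ plus a $\cT$-term, which becomes $\cT^k h_{kl}\ip{x_l}{e}=\ip{\cT}{\n\ip{\nu}{e}}$. I then compare with the spatial Hessians
\eq{\n^2_{ij}\ip{x}{e}=-h_{ij}\ip{\nu}{e},\q \n^2_{ij}\ip{\nu}{e}=h_{ij;k}\ip{x_k}{e}-h^k_ih_{kj}\ip{\nu}{e},}
the latter using Codazzi. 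Next I expand $\n f$ using $\n_i\ip{x+\cos\theta\nu}{e}=\ip{x_i+\cos\theta h^k_ix_k}{e}$. For the conformal Killing part I use $D_vX_e=\ip{v}{e}x+\ip{x}{e}v-\ip{x}{v}e$, which gives
\eq{\n_i\ip{X_e}{\nu}=\ip{x}{e}\ip{x_i}{\nu}\ldots,}
where in fact only the tangential pieces survive, plus $h^k_i\ip{X_e}{x_k}$. The term $\fr{1}{F^2}\ip{x+\cos\theta\nu}{e}\,F^{ij}h_{ij;k}\ip{x_k}{e}$ coming from $\n F$ cancels against the second-order part of $\cL$ applied to $\ip{\nu}{e}$. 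The term $-\fr{\cos\theta}{F}\ip{e}{\n\cdot}$ in $\cL$ is designed to absorb the $\cos\theta\,h^k_i x_k$ contributions. The $X_e$ term in $\cL$ absorbs $h^k_i\ip{X_e}{x_k}$. Euler homogeneity $F^{ij}h_{ij}=F$ then yields the stated formulas after collecting $\abs{x}^2$, $\ip{x}{e}^2$ and $\abs{e^\top}^2=1-\ip{\nu}{e}^2$.

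The main work is \eqref{eq:ev h}. I write $-f_{;ij}$ as
\eq{-\Big(\fr{\ip{x+\cos\theta\nu}{e}}{F}\Big)_{;ij}+\ip{X_e}{\nu}_{;ij}.}
Differentiating the quotient twice produces three groups. The first is $\fr{\ip{x+\cos\theta\nu}{e}}{F^2}F_{;ij}-\fr{2\ip{\cdot}{e}}{F^3}F_{;i}F_{;j}$. The second consists of the mixed terms $\fr{1}{F^2}\ip{x_j+\cos\theta h^k_jx_k}{e}F_{;i}$, symmetrized. The third is $-\fr1F\n^2_{ij}\ip{x+\cos\theta\nu}{e}$, which expands to $\fr1F(h_{ij}\ip{\nu}{e}-\cos\theta h_{ij;k}\ip{x_k}{e}+\cos\theta h^k_ih_{kj}\ip{\nu}{e})$. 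For $F_{;ij}$ I use $F_{;ij}=F^{kl}h_{kl;ij}+F^{kl,pq}h_{kl;i}h_{pq;j}$. I then commute derivatives by the Simons identity in flat ambient space,
\eq{h_{kl;ij}=h_{ij;kl}+h_{kl}h^r_ih_{rj}-h_{ij}h^r_kh_{rl}+h_{kj}h^r_ih_{rl}-h_{il}h^r_kh_{rj}.}
Contracting with $F^{kl}$ turns $F^{kl}h_{ij;kl}$ into the operator part of $\cL$. It also leaves $Fh^r_ih_{rj}-h_{ij}F^{kl}h^r_kh_{rl}$, where the remaining terms cancel by symmetry. Both the $+\fr{1}{F}\ip{\cdot}{e}h^r_ih_{rj}$ part and the $-\fr{\ip{\cdot}{e}}{F^2}F^{kl}h^r_kh_{rl}h_{ij}$ part appear with the correct signs after moving to the left side. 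The gradient term $-\fr{\cos\theta}{F}h_{ij;k}\ip{x_k}{e}$ is absorbed by $\cL$.

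The step I expect to be the main obstacle is $\ip{X_e}{\nu}_{;ij}$. I compute $\n_i\ip{X_e}{\nu}=\ip{D_{x_i}X_e}{\nu}+h^k_i\ip{X_e}{x_k}$. The first term vanishes to leading order because $D X_e$ is a multiple of the identity plus the skew part $\ip{v}{e}x-\ip{x}{v}e$. Explicitly,
\eq{\ip{D_{x_i}X_e}{\nu}=\ip{x_i}{e}\ip{x}{\nu}-\ip{x}{x_i}\ip{e}{\nu}.}
Differentiating once more generates $h_{ij;k}\ip{X_e}{x_k}$ (absorbed into $\cL$) together with $-h^k_ih_{kj}\ip{X_e}{\nu}$ and $h^k_i\ip{D_{x_j}X_e}{x_k}$, which equals $h^k_i(\ip{x}{e}g_{jk}+\text{skew})$. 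The skew parts give exactly the last line of \eqref{eq:ev h}. The derivative of the skew normal term gives $\ip{x}{e}h_{ij}$-type and $-\ip{e}{\nu}g_{ij}$ terms. The bookkeeping needed to land precisely on the coefficients $\ip{x}{e}h_{ij}-2\ip{X_e}{\nu}h^k_ih_{kj}-\ip{e}{\nu}g_{ij}$ is the delicate part. I will check the final formula on a spherical cap with $\cT=0$ as a consistency test.
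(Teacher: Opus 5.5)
Your overall route is the same as the paper's. You write $\dot x=f\nu+\cT$ with $f=\fr{\ip{x+\cos\theta\nu}{e}}{F}-\ip{X_e}{\nu}$, use the standard variation formulas, expand $-f_{;ij}$, and let $\cL$ absorb $F^{kl}h_{ij;kl}$ and all first-order transport terms. Your treatment of $\ip{x}{e}$ and $\ip{\nu}{e}$ is correct, including the cancellations of $F^{ij}h_{ij;k}\ip{x_k}{e}$, $\cos\theta\, h(e^\top,e^\top)$ and $h(X_e^\top,e^\top)$.

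The step that fails is the one you use for $\cL h_{ij}$: your Simons identity has the cubic terms with the wrong sign. The correct commutation formula is
\[
h_{ij;kl}=h_{kl;ij}+h_{kl}h^r_ih_{rj}-h_{ij}h^r_kh_{rl}+h_{il}h^r_kh_{rj}-h_{kj}h^r_ih_{rl}.
\]
Contracting gives $F_{;ij}=F^{kl}h_{ij;kl}+F^{kl,pq}h_{kl;i}h_{pq;j}+F^{kl}h^r_kh_{rl}h_{ij}-Fh^k_ih_{kj}$, which is exactly what the paper uses. A quick test: for $F=H$ your version gives $\Delta h_{ij}=H_{;ij}-Hh^k_ih_{kj}+\abs{h}^2h_{ij}$. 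On minimal hypersurfaces that would mean $\Delta h_{ij}=+\abs{h}^2h_{ij}$, whereas Simons' identity says $\Delta h_{ij}=-\abs{h}^2h_{ij}$.

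This is not cosmetic. Abbreviate $\phi=\ip{x+\cos\theta\nu}{e}$. With the correct identity, $\fr{\phi}{F^2}F_{;ij}$ contributes $-\fr{\phi}{F}h^k_ih_{kj}+\fr{\phi}{F^2}F^{kl}h^r_kh_{rl}h_{ij}$. The first piece cancels the $\fr{\phi}{F}h^k_ih_{kj}$ inside $fh^k_ih_{kj}$. The rest of $fh^k_ih_{kj}$, namely $-\ip{X_e}{\nu}h^k_ih_{kj}$, adds to the $-\ip{X_e}{\nu}h^k_ih_{kj}$ coming from $\ip{X_e}{\nu}_{;ij}$, giving the coefficient $-2\ip{X_e}{\nu}$ in the statement. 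With your sign you would instead get $\fr{2\phi}{F}h^k_ih_{kj}-\fr{\phi}{F^2}F^{kl}h^r_kh_{rl}h_{ij}$ where the statement has $\fr{\phi}{F^2}F^{kl}h^r_kh_{rl}h_{ij}$.

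Your phrase ``after moving to the left side'' does not repair this: these zeroth-order terms are not part of $\cL$ and stay on the right. Note also that the term $\fr{\cos\theta\ip{\nu}{e}}{F}h^k_ih_{kj}$ in the statement comes from the Hessian of $\phi$, not from the Simons identity.

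Your planned spherical-cap check cannot detect the error. At umbilic points $Fh^k_ih_{kj}=F^{kl}h^r_kh_{rl}h_{ij}=\ka^3g_{ij}$, so both versions coincide there.

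A minor attribution point in $\ip{X_e}{\nu}_{;ij}$: $\ip{x}{e}h_{ij}$ and only the $h^k_i(\cdots)$ skew term come from $h^k_i\ip{D_{x_j}X_e}{x_k}$. Differentiating $\ip{x_i}{e}\ip{x}{\nu}-\ip{x}{x_i}\ip{e}{\nu}$ yields $-\ip{e}{\nu}g_{ij}$ and the $h^k_j(\cdots)$ skew term, with the two $h_{ij}\ip{x}{\nu}\ip{\nu}{e}$ pieces cancelling. With the corrected Simons identity, your computation lands exactly on the stated formula.
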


\pf{
Equation \eqref{eq:ev height} was proven in \cite[Prop.~3.5]{WengXia:10/2022}. For \eqref{eq:ev dual height} we use \cite[Prop.~2.11]{WengXia:10/2022}, as well as
\eq{\del_{t}\nu = \n\ip{X_{e}}{\nu} - \n\br{\fr{\ip{x+\cos\theta \nu}{e}}{F}} + h_{k}^{j}\cT^{k}x_{j},}

\eq{\label{eq:nu} \nu_{;i} = h^{k}_{i}x_{k},\q \nu_{;ij} = h^{k}_{i;j}x_{k} - h^{k}_{i}h_{kj}\nu,}
to compute
\eq{\cL\ip{\nu}{e} &= \ip{\n\ip{ X_{e}}{\nu}}{e} + \fr{\ip{x+\cos\theta\nu}{e}}{F^{2}}\ip{\n F}{e} - \fr{\abs{e^{\top}}^{2}}{F} - \fr{\cos\theta}{F}\ip{\n\ip{\nu}{e}}{e}\\
		&\hp{=} + h^{j}_{k}\cT^{k}\ip{x_{j}}{e} - \fr{\ip{x+\cos\theta\nu}{e}}{F^{2}}F^{ij}\ip{h^{k}_{i;j}x_{k} - h^{k}_{i}h_{kj}\nu}{e}-\ip{X_{e}-\fr{\cos\theta}{F}e+\cT}{\n \ip{\nu}{e}}\\
		&= \ip{\n\ip{ X_{e}}{\nu}}{e}  - \fr{\abs{e^{\top}}^{2}}{F} + \fr{\ip{x+\cos\theta\nu}{e}}{F^{2}}F^{ij}h^{k}_{i}h_{kj}\ip{\nu}{e}-\ip{X_{e}}{\n \ip{\nu}{e}}.}
The claimed equation follows from
\eq{\ip{\n\ip{X_{e}}{\nu}}{e} - \ip{X_{e}}{\n\ip{\nu}{e}} &= \ip{\n(\ip{x}{e}\ip{x}{\nu})}{e} - \tfr 12 \ip{\n((\abs{x}^{2}+1)\ip{\nu}{e})}{e}\\
				&\hp{=} - \ip{x}{e}\ip{x}{\n\ip{\nu}{e}} + \tfr 12 (\abs{x}^{2}+1)\ip{e}{\n\ip{\nu}{e}}\\
				&=\abs{e^{\top}}^{2}\ip{x}{\nu} + \ip{x}{e}h(x^{\top},e^{\top}) -  \ip{x}{e^{\top}}\ip{\nu}{e} - \ip{x}{e}\ip{x}{\n\ip{\nu}{e}} \\
				&=(1-\ip{e}{\nu}^{2})\ip{x}{\nu} -  \ip{x}{e-\ip{e}{\nu}\nu}\ip{\nu}{e}.
}
The evolution of the metric is standard and is deduced in \cite[Prop.~2.11]{WengXia:10/2022}. For the evolution of $h$, we start with
\eq{\label{ev-h-eq1}\del_{t}h_{ij} =& -\br{\fr{\ip{x+\cos\theta\nu}{e}}{F}- \ip{X_{e}}{\nu}}_{;ij} + \br{\fr{\ip{x+\cos\theta\nu}{e}}{F}- \ip{X_{e}}{\nu}}h_{ik}h^{k}_{j}\\
&+ h_{ij;k}\cT^{k}+h^{k}_{j}\cT_{k;i} + h^{k}_{i}\cT_{k;j},}
see \cite[Prop.~2.11]{WengXia:10/2022}. 
As usual, to get a formula for $\cL h$, we need to expand the Hessian term. This gives
\eq{-\br{\fr{\ip{x+\cos\theta\nu}{e}}{F}- \ip{X_{e}}{\nu}}_{;ij}&=-\fr{\ip{x_{;ij}+\cos\theta \nu_{;ij}}{e}}{F} + \fr{\ip{x_{j} + \cos\theta \nu_{;j}}{e}}{F^{2}}F_{;i} + \fr{\ip{x_{i} + \cos\theta\nu_{;i}}{e}}{F^{2}}F_{;j}\\
				&\hp{=} + \fr{\ip{x+\cos\theta\nu}{e}}{F^{2}}F_{;ij} - \fr{2\ip{x+\cos\theta\nu}{e}}{F^{3}}F_{;i}F_{;j} + \ip{X_{e}}{\nu}_{;ij}\\
				&=-\fr{\ip{-h_{ij}\nu+\cos\theta (h^{k}_{i;j}x_{k} - h^{k}_{i}h_{kj}\nu)}{e}}{F} + \fr{\ip{x_{j} + \cos\theta h^{k}_{j}x_{k}}{e}}{F^{2}}F_{;i}\\
				&\hp{=} + \fr{\ip{x_{i} + \cos\theta h^{k}_{i}x_{k}}{e}}{F^{2}}F_{;j}- \fr{2\ip{x+\cos\theta\nu}{e}}{F^{3}}F_{;i}F_{;j}\\
				&\hp{=} + \fr{\ip{x+\cos\theta\nu}{e}}{F^{2}}(F^{kl}h_{ij;kl} + F^{kl,pq}h_{kl;i}h_{pq;j} + F^{kl}h^{r}_{k}h_{rl}h_{ij} - F h^{k}_{i}h_{kj})\\
				&\hp{=} + h^{k}_{i;j}\ip{X_{e}}{x_{k}} + \ip{x}{e}h_{ij} - h_{i}^{k}h_{kj}\ip{X_{e}}{\nu} - g_{ij}\ip{e}{\nu}\\
				&\hp{=} + h^{k}_{j}(\ip{x_{i}}{e}\ip{x}{x_{k}}-\ip{x_{i}}{x}\ip{e}{x_{k}}) +  h^{k}_{i}(\ip{x_{j}}{e}\ip{x}{x_{k}}-\ip{x_{j}}{x}\ip{e}{x_{k}}), 
				}
where we also used \cite[Lemma~3.3]{ScheuerWangXia:02/2022} and the standard Simon's identity
\eq{F_{;ij} = F^{kl}h_{ij;kl} + F^{kl,pq}h_{kl;i}h_{pq;j} + F^{kl}h^{r}_{k}h_{rl}h_{ij} - F h^{k}_{i}h_{kj}.}
Putting things together gives
\eq{\cL h_{ij}&= h^{k}_{j}\cT_{k;i} + h^{k}_{i}\cT_{k;j}+  \fr{\ip{x_{j} + \cos\theta h^{k}_{j}x_{k}}{e}}{F^{2}}F_{;i} + \fr{\ip{x_{i} + \cos\theta h^{k}_{i}x_{k}}{e}}{F^{2}}F_{;j}- \fr{2\ip{x+\cos\theta\nu}{e}}{F^{3}}F_{;i}F_{;j}\\
				&\hp{=}+ \fr{\ip{x+\cos\theta\nu}{e}}{F^{2}}F^{kl,pq}h_{kl;i}h_{pq;j}+\fr{\ip{\nu}{e}}{F}h_{ij} + \fr{\cos\theta \ip{\nu}{e}}{F}h^{k}_{i}h_{kj}+ \fr{\ip{x+\cos\theta\nu}{e}}{F^{2}}F^{kl}h^{r}_{k}h_{rl}h_{ij}   \\
				&\hp{=} + \ip{x}{e}h_{ij} - 2\ip{X_{e}}{\nu}h_{i}^{k}h_{kj} - \ip{e}{\nu}g_{ij}\\
				&\hp{=} + h^{k}_{j}(\ip{x_{i}}{e}\ip{x}{x_{k}}-\ip{x_{i}}{x}\ip{e}{x_{k}}) +  h^{k}_{i}(\ip{x_{j}}{e}\ip{x}{x_{k}}-\ip{x_{j}}{x}\ip{e}{x_{k}}).}
}
The previous lemma provides all the necessary ingredients for computing the crucial tensor $\cR$.
\begin{lemma}
In coordinates, where $h_{ij}$ is diagonal and $g_{ij} = \de_{ij}$, along the flow \eqref{eq:flow}, the tensor $\cR$ evolves by
\eq{\cL\cR_{ii} &=  2\cR_{ki}\cT^{k}_{;i}+\fr{\ip{x+\cos\theta\nu}{e}}{F^{2}}F^{kl}h^{r}_{k}h_{rl}(\cR_{ii}+1)+ \fr{\cos\theta \ip{\nu}{e}}{F}h^{i}_{i}\cR_{ii} + \fr{\ip{\nu}{e}}{F}\cR_{ii}- \fr{1}{F}\cR_{ii} \\
		 &\hp{=}-\fr{\ip{x+\cos\theta\nu}{e}}{F}h_{ii}-2\ip{X_{e}}{\nu}h^{i}_{i}\cR_{ii}+ \tfr 12(\abs{x}^{2}+1-2\cos\theta\ip{x}{e})h_{ii}-\ip{x-\cos\theta e}{\nu}	\\
		&\hp{=} + \ip{x-\cos\theta e}{e}\br{  \fr{2\ip{x_{i} + \cos\theta h^{i}_{i}x_{i}}{e}}{F^{2}}F_{;i}  - \fr{2\ip{x+\cos\theta\nu}{e}}{F^{3}}(F_{;i})^{2}+ \fr{\ip{x+\cos\theta\nu}{e}}{F^{2}}F^{kl,pq}h_{kl;i}h_{pq;i}}\\
		&\hp{=} - \fr{2\ip{x+\cos\theta\nu}{e}}{F^{2}}F^{kl}\ip{x_{k}}{e} h_{ii;l}.	} 
\end{lemma}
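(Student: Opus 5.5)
The approach is direct: write $\cR_{ij} = \rho\, h_{ij} - (1+\ip{e}{\nu})g_{ij}$ with $\rho:=\ip{x-\cos\theta e}{e} = \ip{x}{e}-\cos\theta$, and apply $\cL$ via the product rule,
\eq{\cL\cR_{ij} = (\cL\rho)h_{ij} + \rho\,\cL h_{ij} - \cL\ip{\nu}{e}\, g_{ij} - (1+\ip{e}{\nu})\cL g_{ij} - \fr{2\ip{x+\cos\theta\nu}{e}}{F^{2}}F^{kl}\rho_{;k}h_{ij;l}.}
The last term is the usual cross term from the second-order part of $\cL$, since $g$ is parallel. With $\rho_{;k}=\ip{x_k}{e}$ it gives exactly the final line of the claim. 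For $\cL\rho=\cL\ip{x}{e}$ we use \eqref{eq:ev height}, for $\cL h_{ij}$ we use \eqref{eq:ev h}, for $\cL\ip{\nu}{e}$ we use \eqref{eq:ev dual height}, and for $\cL g_{ij}=\del_t g_{ij}$ we use the metric evolution. Afterwards we pass to coordinates with $g_{ij}=\de_{ij}$ and $h$ diagonal at the point, and examine the $ii$-component.

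We sort the terms into groups.

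First, the $\cT$-terms. From $\rho\,\cL h$ we get $2\rho h^k_i\cT_{k;i}$, and from $-(1+\ip{e}{\nu})\cL g$ we get $-2(1+\ip{e}{\nu})\cT_{i;i}$. Together these form $2\cR_{ki}\cT^k_{;i}$.

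Second, the terms with $F^{kl}h^r_kh_{rl}$. From $\rho\cL h$ we get $\rho\fr{\ip{x+\cos\theta\nu}{e}}{F^2}F^{kl}h^r_kh_{rl}h_{ii}$. From $-\cL\ip{\nu}{e}$ we get $-\fr{\ip{x+\cos\theta\nu}{e}}{F^2}F^{kl}h^r_kh_{rl}\ip{\nu}{e}$. Their sum is $\fr{\ip{x+\cos\theta\nu}{e}}{F^2}F^{kl}h^r_kh_{rl}(\cR_{ii}+1)$.

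Third, the $1/F$ terms. The sources are:
\begin{itemize}
\item $\fr{2\ip{x+\cos\theta\nu}{e}\ip{\nu}{e}}{F}h_{ii}$ and $\fr{\cos\theta}{F}\abs{e^\top}^2h_{ii}$ from $\cL\rho$;
\item $\rho\fr{\ip{\nu}{e}}{F}h_{ii}$ and $\rho\fr{\cos\theta\ip{\nu}{e}}{F}h_{ii}^2$ from $\rho\cL h$;
\item $+\fr{\abs{e^\top}^2}{F}$ from $-\cL\ip{\nu}{e}$;
\item $-2(1+\ip{e}{\nu})\fr{\ip{x+\cos\theta\nu}{e}}{F}h_{ii}$ from $\cL g$.
\end{itemize}
We substitute $\rho h_{ii}=\cR_{ii}+1+\ip{e}{\nu}$ and $\abs{e^\top}^2=1-\ip{\nu}{e}^2$. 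The aim is to collapse these into $\fr{\cos\theta\ip{\nu}{e}}{F}h_{ii}\cR_{ii}+\fr{\ip{\nu}{e}-1}{F}\cR_{ii}-\fr{\ip{x+\cos\theta\nu}{e}}{F}h_{ii}$. This is the step we expect to be the main obstacle, since the algebra must close exactly. The residual constants $\fr{1}{F}(1-\ip{\nu}{e}^2)(1-\dots)$ have to cancel against the $\cos\theta$ parts, using $\ip{x+\cos\theta\nu}{e}=\rho+\cos\theta(1+\ip{\nu}{e})$.

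Fourth, the zeroth-order terms. The sources are:
\begin{itemize}
\item $(-\ip{x}{e}^2+\tfr12(\abs{x}^2+1))h_{ii}$ from $\cL\rho$;
\item $\rho(\ip{x}{e}h_{ii}-2\ip{X_e}{\nu}h_{ii}^2-\ip{e}{\nu})$ from $\rho\cL h$;
\item $-\ip{x}{\nu}+\ip{x}{e}\ip{\nu}{e}$ from $-\cL\ip{\nu}{e}$;
\item $+2(1+\ip{e}{\nu})\ip{X_e}{\nu}h_{ii}$ from $\cL g$.
\end{itemize}
The $X_e$ terms combine into $-2\ip{X_e}{\nu}h_{ii}\cR_{ii}$. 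Expanding $\rho\ip{x}{e}-\ip{x}{e}^2=-\cos\theta\ip{x}{e}$ gives $\tfr12(\abs{x}^2+1-2\cos\theta\ip{x}{e})h_{ii}$. The remaining terms $-\rho\ip{e}{\nu}-\ip{x}{\nu}+\ip{x}{e}\ip{\nu}{e}$ reduce to $-\ip{x-\cos\theta e}{\nu}$.

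The rotational terms $h^k_j(\ip{x_i}{e}\ip{x}{x_k}-\dots)$ vanish on the diagonal when $h$ is diagonal. The gradient terms of $F$ and the $F^{kl,pq}$ term from $\rho\cL h$ carry over unchanged and form the third line of the claim.
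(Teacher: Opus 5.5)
Your proposal is correct and follows the paper's own proof. The paper likewise applies the product rule to $\cR_{ij}=\ip{x-\cos\theta e}{e}h_{ij}-(1+\ip{e}{\nu})g_{ij}$, inserts the four evolution equations, and sorts by powers of $1/F$ using $\abs{e^{\top}}^2=1-\ip{\nu}{e}^2$. The step you flagged as the main obstacle does close: the $h_{ii}^2$ and $(1-\ip{\nu}{e}^2)$ terms cancel, and what remains is $\frac{h_{ii}}{F}\big(\ip{x-\cos\theta e}{e}+\cos\theta(1+\ip{\nu}{e})-\ip{x+\cos\theta\nu}{e}\big)=0$.
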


\pf{
We compute
\eq{\cL\cR_{ij} &= \cL\ip{x}{e}h_{ij} + \ip{x-\cos\theta e}{e}\cL h_{ij} - \fr{2\ip{x+\cos\theta\nu}{e}}{F^{2}}F^{kl}\ip{x_{k}}{e} h_{ij;l}-\cL\ip{e}{\nu}g_{ij} - (1+\ip{e}{\nu})\cL g_{ij}\\
		&= \br{\fr{2\ip{x+\cos\theta\nu}{e}}{F}\ip{\nu}{e}+\fr{\cos\theta}{F}\abs{e^{\top}}^{2}-\ip{X_{e}}{e}}h_{ij}\\
		&\hp{=} + \ip{x-\cos\theta e}{e}\Big(  \fr{\ip{x_{j} + \cos\theta h^{k}_{j}x_{k}}{e}}{F^{2}}F_{;i} + \fr{\ip{x_{i} + \cos\theta h^{k}_{i}x_{k}}{e}}{F^{2}}F_{;j}- \fr{2\ip{x+\cos\theta\nu}{e}}{F^{3}}F_{;i}F_{;j}\\
				&\hp{=}+ \fr{\ip{x+\cos\theta\nu}{e}}{F^{2}}F^{kl,pq}h_{kl;i}h_{pq;j}+ \fr{\ip{x+\cos\theta\nu}{e}}{F^{2}}F^{kl}h^{r}_{k}h_{rl}h_{ij}+\fr{\ip{\nu}{e}}{F}h_{ij} + \fr{\cos\theta \ip{\nu}{e}}{F}h^{k}_{i}h_{kj}   \\
				&\hp{=} + \ip{x}{e}h_{ij} - 2\ip{X_{e}}{\nu}h_{i}^{k}h_{kj} - \ip{e}{\nu}g_{ij}+ h^{k}_{j}(\ip{x_{i}}{e}\ip{x}{x_{k}}-\ip{x_{i}}{x}\ip{e}{x_{k}})\\
				&\hp{=} +  h^{k}_{i}(\ip{x_{j}}{e}\ip{x}{x_{k}}-\ip{x_{j}}{x}\ip{e}{x_{k}})\Big)- \fr{2\ip{x+\cos\theta\nu}{e}}{F^{2}}F^{kl}\ip{x_{k}}{e} h_{ij;l}\\
		&\hp{=}-\br{\fr{\ip{x+\cos\theta\nu}{e}}{F^{2}}F^{kl}h^{r}_{k}h_{rl}\ip{\nu}{e}- \fr{\abs{e^{\top}}^{2}}{F}+\ip{x}{\nu} - \ip{x}{e}\ip{\nu}{e}}g_{ij}\\
		 &\hp{=}- 2(1+\ip{e}{\nu})\br{\fr{\ip{x+\cos\theta\nu}{e}}{F}- \ip{X_{e}}{\nu}}h_{ij} + \cR_{ki}\cT^{k}_{;j} + \cR_{kj}\cT^{k}_{;i}.}
Viewing these terms by order of $1/F$, one can observe, after some computations involving $\abs{e^\top}^2 = 1-\ip{e}{\nu}^2$,
 \eq{        
		 \cL \cR_{ij}&=  \cR_{ki}\cT^{k}_{;j} + \cR_{kj}\cT^{k}_{;i}+\fr{\ip{x+\cos\theta\nu}{e}}{F^{2}}F^{kl}h^{r}_{k}h_{rl}\cR_{ij} + \fr{\ip{x+\cos\theta\nu}{e}}{F^{2}}F^{kl}h^{r}_{k}h_{rl}g_{ij}\\
		 &\hp{=}+ \fr{\cos\theta \ip{\nu}{e}}{F}h^{k}_{i}\cR_{kj} + \fr{\ip{\nu}{e}}{F}\cR_{ij}- \fr{1}{F}\cR_{ij} -\fr{\ip{x+\cos\theta\nu}{e}}{F}h_{ij} -2\ip{X_{e}}{\nu}h^{k}_{i}\cR_{kj}\\
		 &\hp{=} + \tfr 12(\abs{x}^{2}+1)h_{ij} - \cos\theta\ip{x}{e}h_{ij}+(\cos\theta\ip{\nu}{e}-\ip{x}{\nu})g_{ij}	\\
		&\hp{=} + \ip{x-\cos\theta e}{e}\Big(  \fr{\ip{x_{j} + \cos\theta h^{k}_{j}x_{k}}{e}}{F^{2}}F_{;i} + \fr{\ip{x_{i} + \cos\theta h^{k}_{i}x_{k}}{e}}{F^{2}}F_{;j}- \fr{2\ip{x+\cos\theta\nu}{e}}{F^{3}}F_{;i}F_{;j}\\
				&\hp{=}+ \fr{\ip{x+\cos\theta\nu}{e}}{F^{2}}F^{kl,pq}h_{kl;i}h_{pq;j}+ h^{k}_{j}(\ip{x_{i}}{e}\ip{x}{x_{k}}-\ip{x_{i}}{x}\ip{e}{x_{k}}) \\
				&\hp{=}  +  h^{k}_{i}(\ip{x_{j}}{e}\ip{x}{x_{k}}-\ip{x_{j}}{x}\ip{e}{x_{k}})\Big)- \fr{2\ip{x+\cos\theta\nu}{e}}{F^{2}}F^{kl}\ip{x_{k}}{e} h_{ij;l}.}
The proof is complete.
}

\subsection*{Preservation of horocap-convexity}

We denote
\eq{T^\star :=\sup\{t<T\cn \Si_{t'}~\mbox{is strictly $\theta$-horocap-convex for all}\ t'\leq t\}}
 and suppose for the sake of contradiction that $T^{\star}<T$.
We first take care of the height estimate.

\begin{lemma}\label{height est}
Under the assumptions of \autoref{thm:flow}, along the flow \eqref{eq:flow} there holds
\eq{\fa t\in [0,T^{\star}]~\fa x\in \Si_{t}\cn\q \ip{x-\cos\theta e}{e}\geq  \de_{0}}
for some positive constant $\de_{0}$ which only depends on $\Si_0$. 
\end{lemma}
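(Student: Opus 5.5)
Set $u:=\ip{x}{e}-\cos\theta$. The plan is to apply the maximum principle to $u$, whose evolution follows from \eqref{eq:ev height} because $\cL$ annihilates constants: $\cL u=\cL\ip{x}{e}$. We take $\de_0:=\min_{\Si_0}u>0$, which is positive by strict horocap-convexity of $\Si_0$, and aim to show that $\min_{\Si_t}u$ is non-decreasing on $[0,T^\star]$. Lower bounds will then simply persist.

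First we exclude the boundary as the location of a new minimum. On $\del\Si_t$ we have $x\in\bbS^n$. Writing $e$ in the orthonormal frame $(\mu,\nu)$ of the normal space of $\del\Si_t\sub\Si_t$, the capillary relation gives $x=\sin\theta\mu-\cos\theta\nu$. We then compute $\n_\mu u=\ip{\mu}{e}$ and want to show $\n_\mu u\le 0$ whenever $u$ is small, or at least at a boundary minimum. Then a boundary minimum is either non-strict in the Hopf sense or forces the value to be controlled. Concretely: $\ip{\mu}{e}=\fr{1}{\sin\theta}\br{\ip{x}{e}+\cos\theta\ip{\nu}{e}}$. At a boundary point where $u$ is minimal among boundary points with $\ip{x}{e}$ close to $\cos\theta$, we would use $\ip{\nu}{e}\ge -1$ and the horocap inequality $\cR\ge 0$ on $[0,T^\star]$. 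For tangential directions along $\del\Si_t$ the minimum of $\ip{x}{e}$ on $\del\Si_t\sub\bbS^n$ gives first-order information. Alternatively, and more simply, we would note that $\ip{x+\cos\theta\nu}{e}=\sin\theta\ip{\mu}{e}$ on the boundary and examine the sign of the flow speed directly. Indeed $\ip{X_e}{\nu}=\ip{x}{e}\ip{x}{\nu}-\ip{e}{\nu}=-\cos\theta\ip{x}{e}-\ip{e}{\nu}$ on $\bbS^n$. So the normal speed there, as we understand it, pushes $\del\Si_t$ toward larger $\ip{x}{e}$ exactly where $\ip{x}{e}\le\cos\theta$ would be approached. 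We expect this boundary step to be the main obstacle. The first thing we would try is to show $\n_\mu u<0$ at a point where $\min u$ is attained on $\del\Si_t$ with $\ip{\nu}{e}$ bounded away from $-1$, and to combine this with Hopf's lemma.

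In the interior, at a spatial minimum point of $u$ we have $\n u=e^\top=0$, so $\abs{e^\top}=0$ and $\ip{\nu}{e}=\pm1$. By \autoref{lemma:support function estimate} and convexity, $\ip{\nu}{e}=-1$ at such a point, because the minimal height point of a convex cap has downward normal there. Then \eqref{eq:ev height} together with $\n^2 u\ge0$ gives
\eq{\del_t u\ge -\fr{2\ip{x+\cos\theta\nu}{e}}{F}-\ip{x}{e}^2+\tfr12(\abs{x}^2+1).}
Here $x\parallel e$, so $\abs{x}=\ip{x}{e}$, and the last two terms combine to $\tfr12(1-\ip{x}{e}^2)\ge0$. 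Moreover $\ip{x+\cos\theta\nu}{e}=\ip{x}{e}-\cos\theta=u$, so $\del_t u\ge -2u/F+\tfr12(1-\abs{x}^2)$.

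We still have to handle the term $-2u/F$. Since $F>0$ is only known qualitatively, we would instead compare with $\ti u:=u\,\mrm{e}^{\int_0^t 2/\min F}$, or, better, observe that $\cL$ acting on $\log u$ absorbs this linear term. The resulting bound is $\min u(t)\ge \de_0\exp\br{-\int_0^t 2\max_{\Si_s}F^{-1}\,ds}$, which is positive on the compact interval $[0,T^\star]\sub[0,T)$ because $F>0$ is smooth up to $T^\star<T$. To obtain $\de_0$ depending only on $\Si_0$, as the statement asks, we would additionally use $\cR>0$ at the minimum point. Since $\ip{\nu}{e}=-1$ there, the tensor reduces to $u\,h\ge 0$, which is useless. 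So instead we would try a barrier comparison with the $\theta$-horocap through the initial data, which is a stationary solution of \eqref{eq:flow} by \autoref{horocap}, and apply the avoidance principle.
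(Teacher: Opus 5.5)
Your interior computation is essentially right. One correction: the claim $x\parallel e$ at the minimum point is unjustified, since the normal there is $-e$ but the point need not lie on the $e$-axis. Still, $\abs{x}^2\ge\ip{x}{e}^2$ gives the same sign. The problem is that all it yields is $\min_{\Si_t}u\ge\de_0\exp\br{-2\int_0^t\max_{\Si_s}F^{-1}\,ds}$, a bound depending on $\inf F$ over $[0,T^\star]$. The statement requires $\de_0$ to depend only on $\Si_0$, and this matters: the lower bound on $F$ in \autoref{prop:est-F} and the constant $\al$ in the curvature estimates are derived from this height bound, so an $F$-dependent version would be circular.

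Your proposed remedy does not work. A $\theta$-horocap is a tilted cap: in \autoref{horocap} its center satisfies $\ip{\hat e_0}{e}=R+\cos\theta<\sqrt{R^2+2R\cos\theta+1}=\abs{\hat e_0}$. On a cap $\abs{x-\hat e_0}=R$, the normal speed of \eqref{eq:flow} works out to $\fr1R\ip{x}{\abs{\hat e_0}^2e-\ip{\hat e_0}{e}\hat e_0}$, which vanishes identically only if $\hat e_0\parallel e$. \autoref{horocap} only says $\cR=0$ on horocaps; it says nothing about stationarity. Moreover, a horocap reaches the level $\{\ip{x}{e}=\cos\theta\}$, so lying above it could give at most $u\ge0$.

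The missing idea is the right barrier. Take a $\theta$-capillary cap $C_{\theta,R}(e)$ centered on the $e$-axis; such caps are stationary. Choose $R$ so large that $\Si_0$ lies strictly inside $\wh{C_{\theta,R}(e)}$, which is possible because $\min_{\Si_0}u>0$ and these caps approach the slice $\{\ip{x}{e}=\cos\theta\}$ as $R\to\8$. Its lowest point has height $\sqrt{R^2+2R\cos\theta+1}-R>\cos\theta$. Showing that $\Si_t$ never touches this cap then gives $\de_0=\de_0(\Si_0)$.

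The boundary step is left open, and the goal you set there has the wrong sign. Since $\mu$ is the outward conormal, $\n_\mu u\le0$ holds automatically at a boundary minimum, and the Hopf lemma only makes it strict; a contradiction requires $\n_\mu u\ge0$. Your own identity actually gives this. With $\cos\theta\ge0$ and $\ip{\nu}{e}\ge-1$, you get $\n_\mu u=\ip{\mu}{e}=\fr{1}{\sin\theta}\br{\ip{x}{e}+\cos\theta\ip{\nu}{e}}\ge u/\sin\theta>0$, so $u$ has no boundary minima while $u>0$.

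In the barrier argument, however, the boundary event to rule out is a first touching of $\del\Si_t$ with $\del C_{\theta,R}(e)$. The paper shows $\ip{X_e}{\nu}=-\sin\theta\ip{e}{\bar\nu}>0$ at such a point, so $\Si_t$ is locally starshaped there, and then applies the Hopf-lemma argument of Wang--Weng. Interior touching is excluded by the avoidance principle.

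Finally, the monotonicity of $\min_{\Si_t}u$ that you first aim for cannot come out of your inequality, because the Hessian term you discarded matters. At the bottom point of the stationary cap $C_{\theta,R}(e)$ one has $\del_t u=0$ but $\cL u=-Ru<0$.
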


\pf{
Assume that $C_{\theta,R}(e)$ is a $\theta$-capillary spherical cap around the $e$-axis such that $\Si_0\subset\widehat{C_{\theta,R}(e)}$ and $\Si_0$ does not touch $C_{\theta,R}$ anywhere. Then $C_{\theta,R}(e)$ is a stationary solution to the flow \eqref{eq:flow}, see \cite[Equ.~(2.6)]{WengXia:10/2022}.
Define
\eq{d(t)=\inf\{\vert x(\cdot,t)-y\vert\cn (x(\cdot,t),y)\in\Si_t\times C_{\theta,R}(e)\}.}
Then $d(0)>0$ 
and we claim that $d$ remains positive. If $T^\star = 0$, there is nothing to prove.
Now, let $0<t<T^\star$ be the first time, where $d(t)=0$ and let $(x_t,y_t)\in\Si_t\times C_{\theta,R}(e)$ realize $d(t)$, i.e. $x_t = y_t$.
We first exclude the possibility that $x_t\in \del\Si_t$. If this were the case, then
\eq{
	\langle X_e,\nu\rangle(x_t)=-\langle e,\nu\rangle-\cos\theta\langle x_t,e\rangle=-\sin\theta\langle e,\bar{\nu}\rangle(x_t)>0,
}
because $x_t$ maximizes the spherical distance along $\del \Sigma_t$ to the north pole, and $\bar\nu$ is the outer normal to $\widehat{\del\Sigma_t}$, which then has to point downwards.
Hence, $\Sigma_t$ is  locally starshaped near the point $x_t$. This allows us to apply the argument  in \cite[Prop 4.2]{WangWeng:/2020} for starshaped surfaces to exclude this case via the Hopf boundary point lemma.
 Consequently, $x_t\in\text{int}(\Si_t)$, and the avoidance principle tells us that this case can not occur either. Hence $d(t)>0$ for all $t$, which shows that $\Sigma_t$ stays above $C_{\theta,R}(e)$. 
}

Now we need to keep the flow in the interior of the unit ball.

\begin{lemma}
Under the assumptions of \autoref{thm:flow}, along the flow \eqref{eq:flow} there holds $T^{\star}>0$ and 
\eq{\fa t\in [0,T^{\star}]~\fa x\in \mrm{int}(\Si_{t})\cn\q \abs{x}<1.}
\end{lemma}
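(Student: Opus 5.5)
The statement has two parts: $T^{\star}>0$, and for $t\in[0,T^{\star}]$ no interior point of $\Si_{t}$ reaches the unit sphere. I would prove them in that order.

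\emph{Step 1: $T^{\star}>0$.} The solution lies in $C^{1+\fr{\al}{2},2+\al}([0,T)\x\bbB^{n})$, so $h$, $\nu$, $x$ and $g$ depend continuously on $t$ in $C^{0}$ up to the boundary. The initial hypersurface $\Si_{0}$ is strictly $\theta$-horocap-convex on the compact set $\Si_{0}$. This gives a uniform margin $\ip{x-\cos\theta e}{e}\geq 2\de>0$ and $\cR\geq 2\de g$ on $\Si_{0}$. For $\abs{x}<1$ on the interior, the initial surface is embedded with $\del\Si_{0}=\Si_{0}\cap\bbS^{n}$. Near the boundary the contact angle $\theta>0$ is transversal, so interior points close to $\del\Si_{0}$ stay inside the ball. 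Away from the boundary we have a positive distance to $\bbS^{n}$. All of these open conditions persist for a short time, hence $T^{\star}>0$.

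\emph{Step 2: the interior stays inside the ball.} The main obstacle is showing that no interior point of $\Si_{t}$ touches $\bbS^{n}$ for $t\leq T^{\star}$. I would argue by contradiction. Let $t_{0}\leq T^{\star}$ be the first time at which an interior point $p$ of $\Si_{t_{0}}$ lies on $\bbS^{n}$. Then $\abs{x}^{2}$ attains its maximum $1$ at $p$, so $\Si_{t_{0}}$ is tangent to $\bbS^{n}$ there and $\nu=\pm x$.

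By \autoref{lemma:support function estimate}, $\ip{x-\cos\theta e}{\nu}\leq 0$. Together with $\ip{x}{e}\geq\cos\theta$, this forces $\nu=-x$ at $p$. Indeed, if $\nu=x$, we would get $1-\cos\theta\ip{x}{e}\leq 0$, which is impossible because $\theta\in(0,\pi/2]$ and $\abs{x}=1$ give $\cos\theta\ip{x}{e}\leq\cos\theta<1$.

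At $p$ the surface is touched from inside by the unit sphere. The maximum principle then gives $h\leq g$ there; with $\nu=-x$ the comparison has to be done carefully, but at least the eigenvalues of $h$ are bounded by those of the sphere. Now use the horocap condition $\cR\geq 0$, valid up to $T^{\star}$ by continuity. At $p$ it reads
\[
\ip{x-\cos\theta e}{e}\,h\geq(1-\ip{e}{x})g .
\]
Combined with $h\leq g$, this gives $\ip{x}{e}-\cos\theta\geq 1-\ip{x}{e}$. Next I would compare with the support-function inequality of \autoref{lemma:support function estimate} at $p$, where it gives $-1+\cos\theta\ip{x}{e}\leq0$. I would also use \autoref{height est} ($\ip{x}{e}\geq\cos\theta+\de_{0}$) to try to reach a contradiction.

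If this pointwise check turns out to be consistent, the fallback is a strong maximum principle argument. I would look at $\abs{x}^{2}$, or better the conformally natural quantity $\ip{X_{e}}{\nu}$ with $\nu=-x$ on $\bbS^{n}$. I would compute $\cL\abs{x}^{2}$ and show that at an interior maximum equal to $1$ its sign is incompatible with a first touching. Here the time derivative comes from the normal speed $\fr{\ip{x+\cos\theta\nu}{e}}{F}-\ip{X_{e}}{\nu}$. At $p$ we have $\ip{X_{e}}{\nu}=-\ip{x}{e}+\ip{e}{x}=0$, so the normal speed reduces to $\fr{(1-\cos\theta)\ip{x}{e}}{F}>0$ in the direction $\nu=-x$, that is, pointing inward. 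This strict inward velocity at a first contact point contradicts the fact that $\abs{x}^{2}$ reaches its maximum $1$ for the first time at $t_{0}$. That closes the argument.
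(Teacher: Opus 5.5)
Your closing (``fallback'') argument is correct, but it takes a different route from the paper's. The first attempt in Step 2 rests on a reversed inequality and should be deleted. At an interior maximum $p$ of $\abs{x}^2$ the second-order condition is $\tfr12\n^2\abs{x}^2=g-\ip{x}{\nu}h\leq 0$. For $\nu=x$ this gives $h\geq g$, not $h\leq g$: a surface touching $\bbS^n$ from inside is more curved than the sphere. For $\nu=-x$ it reads $g+h\leq 0$, which is impossible since $h\geq 0$. So once you have ruled out $\nu=x$ via \autoref{lemma:support function estimate}, you are already done at the level of second derivatives. The comparison with $\cR\geq 0$ is not needed.

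That pointwise observation is the paper's proof in traced form. Since $\ip{x}{\nu}\leq -\cos\theta\leq 0$ (from the proof of \autoref{lemma:support function estimate}) and $H\geq 0$, one has $\tfr12\De\abs{x}^2=n-H\ip{x}{\nu}\geq n>0$ on every $\Si_t$ with $t\leq T^\star$. Hence $\abs{x}^2$ has no interior maximum, and interior points satisfy $\abs{x}<1$. For short times, $\De\abs{x}^2>0$ persists by $C^2$-continuity from $t=0$, which replaces your transversality argument in Step 1. This route is purely static: it uses no first contact time, no flow speed, no positivity of $F$ and no \autoref{height est}. It shows that interior touching is impossible for any convex capillary hypersurface with $\ip{x}{\nu}\leq 0$.

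Your kinematic route is also valid. At a first interior contact with $\nu=-x$ you correctly get $\ip{X_e}{\nu}=0$. The tangential part $\cT$ does not change $\abs{x}^2$ to first order, because $x\perp T_p\Si_{t_0}$. The normal speed $(1-\cos\theta)\ip{x}{e}/F$ is strictly positive, using $F>0$ for $t_0\leq T^\star<T$ and \autoref{height est} when $\theta=\pi/2$. So $\del_t\abs{x}^2<0$ at $(t_0,p)$, contradicting contact from inside. This costs the extra flow-dependent inputs just listed.

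You should also say why a first contact time is attained at an interior point. Either note that for $t<T^\star$ the interior lies in the open ball by definition, so $t_0=T^\star$. Or use your collar argument with $\ip{x}{\mu}=\sin\theta>0$ along $\del\Si_t$, which prevents contact points from accumulating at the boundary.
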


\pf{
The positivity of $\cR$ and $\ip{x-\cos\theta e}{e}$ for a short time is trivial due to the compactness of $\Si_t$. Hence, for the first claim we only have to prove $\abs{x}<1$ is preserved for a short time. From the proof of \autoref{lemma:support function estimate} we obtain $\ip{x}{\nu}\leq -\cos\theta$ at $t=0$. This implies that for short time we have
\eq{\tfr 12\De\abs{x}^2 = -\ip{x}{\nu}H + n > 0. }
and hence $\abs{x}^2$ can not obtain any interior maxima for a short time.
To prove that $\Si_{T^\star}$ shares the desired property, we observe that $\Si_t$ is horocap-convex for all $t\leq T^\star$ and hence $\ip{x}{\nu}\leq 0$ continues to hold up to $T^\star$. The same argument as above implies that $\abs{x}^2$ can not attain any interior maximum on $\Si_{T^\star}$.
}

\begin{lemma}\label{lemma: R>0 pres}
	Assume that $\cR>0$ on the initial hypersurface $\Sigma_0$, then
the property $\cR>0$ is preserved up to and including $T^{\star}$. In particular we have $T^\star = T$.
\end{lemma}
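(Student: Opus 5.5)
We argue by contradiction using the maximum principle for the tensor $\cR$. Suppose $\cR>0$ fails at some time $t_0\le T^\star$, taken to be the first such time. Then $\cR\ge 0$ on $[0,t_0]$, and there is a point $p\in\Si_{t_0}$ and a unit vector $\xi$ with $\cR(\xi,\xi)=0$. By \autoref{height est} and the preceding lemma, $\ip{x-\cos\theta e}{e}\ge\de_0$ and $\abs{x}<1$ in the interior up to $t_0$. Hence $\cR$ stays well defined, $h$ stays positive definite, and $\ip{x+\cos\theta\nu}{e}>0$ (since $\ip{x}{e}\ge\cos\theta\ge -\cos\theta\ip{\nu}{e}$), so $\cL$ is uniformly parabolic. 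We treat interior and boundary points separately.

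\emph{Interior points.} Work in coordinates at $p$ with $g_{ij}=\de_{ij}$, $h$ diagonal and $\xi=e_1$, so that $\cR_{11}=0$, i.e. $h_{11}=(1+\ip{e}{\nu})/\ip{x-\cos\theta e}{e}$ is the smallest eigenvalue of the relevant normalized tensor. At a first zero we have $\cL\cR_{11}\le 0$, while $\n\cR_{11}=0$ and $\n^2\cR_{11}\ge0$ in the usual tensor maximum principle sense. We then plug into the evolution equation for $\cR$.
\begin{itemize}
\item The $\cT$ terms and all terms carrying a factor $\cR_{11}$ vanish.
\item Using $\n\cR_{11}=0$, we replace $h_{11;l}$ by derivatives of $\ip{x}{e}$ and $\ip{\nu}{e}$. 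This turns $h_{11;l}$ into lower-order terms involving $\ip{x_l}{e}$, $h_{ll}\ip{x_l}{e}$ and $h_{11}$.
\item The gradient terms $F_{;1}$, $(F_{;1})^2$ and $F^{kl,pq}h_{kl;1}h_{pq;1}$ are handled as in Hu/Li/Wei \cite{HuLiWei:04/2022}. We use the inverse concavity of $F$ (Assumption (v)) via the standard inequality
\[
F^{kl,pq}\eta_{kl}\eta_{pq}+2\sum_{k>l}\frac{F^{k}-F^{l}}{\ka_k-\ka_l}\eta_{kl}^2\ge -2F^{-1}(F^{k}\eta_{kk})^2+2F^{kl}h^{pq}\eta_{kp}\eta_{lq}\cdots
\]
together with the Codazzi equation and the vanishing gradient of $\cR_{11}$. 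This absorbs the bad term $-\tfr{2\ip{x+\cos\theta\nu}{e}}{F^3}(F_{;1})^2$ and the mixed term $\ip{x_1+\cos\theta h_{11}x_1}{e}F_{;1}/F^2$ by completing a square. The inequality $\cR\ge0$ in the other directions ($h_{kk}\ge h_{11}$ after normalization) gives the needed sign on $\tfr{F^k-F^1}{\ka_k-\ka_1}$-type terms.
\item The zeroth-order terms remain: $\tfr{\ip{x+\cos\theta\nu}{e}}{F^2}F^{kl}h_k^rh_{rl}$, $-\tfr{\ip{x+\cos\theta\nu}{e}}{F}h_{11}$, $\tfr12(\abs{x}^2+1-2\cos\theta\ip{x}{e})h_{11}$ and $-\ip{x-\cos\theta e}{\nu}$. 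By \autoref{lemma:support function estimate} the last one is $\ge0$, and the third is $\tfr12\abs{x-\cos\theta e}^2h_{11}+\tfr12\sin^2\theta h_{11}>0$. For the first two, concavity and homogeneity give $F^{kl}h^r_kh_{rl}\ge F^2$ (inverse concavity yields $F^{kl}h_k^rh_{rl}\ge F^2$). Since $h_{11}$ is the minimal curvature, $F\ge h_{11}$, so these combine to $\tfr{\ip{x+\cos\theta\nu}{e}}{F}(F-h_{11})\ge0$.
\end{itemize}
Altogether $\cL\cR_{11}>0$, a contradiction.

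\emph{Boundary points.} If $p\in\del\Si_{t_0}$, we compute $\n_\mu\cR(\xi,\xi)$ using the capillary boundary relations. These are: $\mu$ is a principal direction, $h_{\mu\mu;\mu}$-type relations from \cite{WengXia:10/2022}, $\ip{x}{\nu}=-\cos\theta$, $\ip{x}{\mu}=\sin\theta$, and $\n_\mu\ip{x}{e}=\ip{\mu}{e}$, $\n_\mu\ip{\nu}{e}=h_{\mu\mu}\ip{\mu}{e}$. We distinguish the cases $\xi=\mu$ and $\xi\perp\mu$. The tangential case uses $h_{\alpha\beta;\mu}=\cot\theta\,(\cdots)$, with second fundamental form of $\del\Si\sub\bbS^n$ relations. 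The aim is to show $\n_\mu\cR(\xi,\xi)\ge0$ whenever $\cR\ge0$ and $\cR(\xi,\xi)=0$, contradicting the Hopf lemma, or to reduce to $\cR_{\mu\mu}\ge$ a combination of tangential entries. We expect the boundary derivative identity to be the main obstacle. The interior gradient absorption is the other delicate step, and there we would follow \cite{HuLiWei:04/2022} closely; on the boundary, the horocap equality case (\autoref{horocap}) serves as a guide, since all such terms must vanish identically there.

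Finally, at $T^\star$ all defining conditions are strict: the height bound comes from \autoref{height est}, $\abs{x}<1$ from the previous lemma, and $\cR>0$ from the argument above. By continuity they persist slightly beyond $T^\star$, so $T^\star=T$.
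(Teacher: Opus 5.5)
Your overall architecture matches the paper's: first zero of the lowest eigenvalue of $\cR$, interior maximum principle, Hopf lemma at $\del\Si_t$, then openness and closedness giving $T^\star=T$. However, the decisive interior step does not close as you describe it.

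\textbf{Interior case.} After using $\cR_{11}=0$, the mixed term becomes $\fr{2\ip{x+\cos\theta\nu}{e}}{F^2}\ip{x_1}{e}F_{;1}$, which is \emph{linear} in $F_{;1}$. Completing a square against a quadratic good term always leaves a zeroth-order remainder, strictly negative unless $\ip{x_1}{e}=0$. In the paper that remainder is paid for by exactly the term $\fr{\ip{x+\cos\theta\nu}{e}}{F^2}(F^{kl}h^r_kh_{rl}-Fh_{11})$, which you instead count as a separate nonnegative leftover.

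The quadratic good term also does not come out of what you wrote. Your inverse-concavity inequality has the signs reversed. The correct form is $F^{kl,pq}\eta_{kl}\eta_{pq}\geq\fr2F(F^{kl}\eta_{kl})^2-2F^{kl}b^{pq}\eta_{kp}\eta_{lq}$ with $b=h^{-1}$, and it is the $+\fr2F(F_{;1})^2$ that cancels $-\fr{2\ip{x+\cos\theta\nu}{e}}{F^3}(F_{;1})^2$. The resulting error $-2F^{kk}\ka_l^{-1}(h_{kl;1})^2$ can only be controlled by the extra term $2\sum_{l>D}(\cR_{1l;k})^2/(\rho_l-\rho_1)$ in the refined second-order condition for the lowest eigenvalue (Choi--Kim--Lee). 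The plain condition $\n^2\cR_{11}\geq 0$ is not enough.

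Contrary to your claim, the $\fr{F^{kk}-F^{11}}{\ka_k-\ka_1}$ terms have the bad sign under concavity. They are cancelled exactly, via $\ip{x-\cos\theta e}{e}h_{11;l}=(\ka_l-\ka_1)\ip{x_l}{e}$, by $-\fr{2\ip{x+\cos\theta\nu}{e}}{F^2}F^{kl}\ip{x_k}{e}h_{11;l}$ together with the $k=1$ part of that refined term.

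What remains is $\fr{2\ip{x+\cos\theta\nu}{e}\ip{x-\cos\theta e}{e}}{F^2}\sum_{l>D}\fr{\ka_1F^{ll}(h_{ll;1})^2}{(\ka_l-\ka_1)\ka_l}$. Cauchy--Schwarz and $\ip{x-\cos\theta e}{e}\ka_1=1+\ip{\nu}{e}$ bound this below by $\fr{\ip{x+\cos\theta\nu}{e}}{F^2}\cdot\fr{2(1+\ip{\nu}{e})}{F^{kk}\ka_k^2-F\ka_1}(F_{;1})^2$. Completing the square using $\ip{x_1}{e}^2\leq 1-\ip{\nu}{e}^2\leq 2(1+\ip{\nu}{e})$ then costs at most $\fr{\ip{x+\cos\theta\nu}{e}}{F^2}(F^{kk}\ka_k^2-F\ka_1)$, i.e.\ precisely that zeroth-order term. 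This exact matching is where the normalisation in the definition of horocap-convexity enters, and it is missing from your sketch. Only $\tfr12(\abs{x}^2+1-2\cos\theta\ip{x}{e})\ka_1-\ip{x-\cos\theta e}{\nu}$ survives to give the contradiction.

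Finally, your assertion that $h$ stays positive definite is unjustified at the first zero. If $\nu=-e$ there, then $\ka_1=0$ and the Cauchy--Schwarz step is unavailable. One must use $\ip{x_1}{e}=0$ and $\ip{x}{e}-\cos\theta\geq\de_0$ instead.

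\textbf{Boundary case.} You leave this open, and it needs two explicit computations, one of which uses an ingredient you do not mention.

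If the null direction $x_1$ is tangent to $\del\Si$, use that $\mu$ is principal, $h_{11;\mu}=(\cot\theta\ka_1+\tfr{1}{\sin\theta})(\ka_\mu-\ka_1)$, and $\mu=\tfr{x}{\sin\theta}+\cot\theta\nu$. These give $\n_\mu\cR_{11}=\cot\theta\,\cR_{11}(\ka_\mu-\ka_1)=0$, contradicting the Hopf lemma.

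If the null direction is $\mu$, you need the boundary condition $\n_\mu F=0$ satisfied along the capillary flow (Weng--Xia). It gives $F^{\mu\mu}h_{\mu\mu;\mu}=-\sum_\al F^{\al\al}(\cot\theta\ka_\al+\tfr{1}{\sin\theta})(\ka_\mu-\ka_\al)\geq 0$, since $\ka_\mu=\ka_1$ is minimal. Hence $\cR_{\mu\mu;\mu}=\ip{x-\cos\theta e}{e}h_{\mu\mu;\mu}\geq 0$, again contradicting Hopf.

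Without these computations, the boundary part of your proposal remains a plan rather than a proof.
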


\pf{
Let $\rho_{m} = \ip{x-\cos\theta e}{e}\ka_{m} - (1+\ip{e}{\nu})$, $m=1,\cdots,n$, be the eigenvalues of $\cR$ and $D$ be the multiplicity of $\rho_{1}$. Suppose the claim is false, then there is some point $\xi_0\in\Si_{T^\star}$ such that $\rho_{1}=0$ at $\xi_0$.

{\bf Case 1:} $\xi_{0}\notin \del\Si_{T^\star}$.
Then,
$\eta\equiv0$ is a smooth lower support of $\rho_1$ at $(T^\star,\xi_{0})$, i.e. $\eta= \rho_1(T^\star,\xi_{0})$ and  $\eta\leq \rho_1$. We also choose coordinates, such that $g_{ij} = \de_{ij}$ and $h_{ij}$ is diagonal.

Then by \cite[Lemma~4.1]{ChoiKimLee:12/2024}, at $(T^\star,\xi_{0})$ we obtain 
\eq{\label{gradient S}\cR_{ij;k} =0, \q \text{for} \quad 1 \le i,j  \le D,}
\eq{\label{maximum S}\del_{t}\cR^1_{1} \leq 0, \q \cR_{11;kk} - 2\sum_{l>D}\fr{(\cR_{1l;k})^{2}}{\rho_{l}-\rho_{1}}\geq0.}
We have to plug  the evolution of $\cR_{11}$ into   \eqref{maximum S}, and then, we can ignore all the terms of  gradients of $\cR_{11}$ by \eqref{gradient S}. We obtain, using $\cR_{11}=0$ at $(T^\star,\xi_{0})$
\eq{\label{pf:pres h-conv 3}0&\geq \fr{\ip{x+\cos\theta\nu}{e}}{F^{2}}F^{kl}h^{r}_{k}h_{rl}-\fr{\ip{x+\cos\theta\nu}{e}}{F}h_{11} + \tfr 12(\abs{x}^{2}+1-2\cos\theta\ip{x}{e})h_{11}-\ip{x-\cos\theta e}{\nu}	\\
		&\hp{=} + \ip{x-\cos\theta e}{e}\br{  \fr{2\ip{x_{1} + \cos\theta h^{1}_{1}x_{1}}{e}}{F^{2}}F_{;1}  - \fr{2\ip{x+\cos\theta\nu}{e}}{F^{3}}(F_{;1})^{2}+ \fr{\ip{x+\cos\theta\nu}{e}}{F^{2}}F^{kl,pq}h_{kl;1}h_{pq;1}}\\
		&\hp{=} - \fr{2\ip{x+\cos\theta\nu}{e}}{F^{2}}F^{kl}\ip{x_{k}}{e} h_{11;l} + \fr{2\ip{x+\cos\theta\nu}{e}}{F^{2}}F^{kk}\sum_{l>D}\fr{(\cR_{1l;k})^{2}}{\rho_{l}-\rho_{1}} \\
		&= \fr{\ip{x+\cos\theta\nu}{e}}{F^{2}}F^{kl}h^{r}_{k}h_{rl}-\fr{\ip{x+\cos\theta\nu}{e}}{F}h_{11} + \tfr 12(\abs{x}^{2}+1-2\cos\theta\ip{x}{e})h_{11}-\ip{x-\cos\theta e}{\nu}	\\
		&\hp{=} + \ip{x-\cos\theta e}{e}\br{  \fr{2\ip{x_{1} + \cos\theta h^{1}_{1}x_{1}}{e}}{F^{2}}F_{;1}  - \fr{2\ip{x+\cos\theta\nu}{e}}{F^{3}}(F_{;1})^{2}+ \fr{\ip{x+\cos\theta\nu}{e}}{F^{2}}F^{kl,pq}h_{kl;1}h_{pq;1}}\\
		&\hp{=} - \fr{2\ip{x+\cos\theta\nu}{e}}{F^{2}\ip{x-\cos\theta e}{e}}F^{kk}\ip{x_{k}}{e}^{2}(\ka_{k}-\ka_{1}) + \fr{2\ip{x+\cos\theta\nu}{e}\ip{x-\cos\theta e}{e}}{F^{2}}F^{kk}\sum_{l>D}\fr{(h_{1l;k})^{2}}{\ka_{l}-\ka_{1}},     }
where we used that for all $k,l,m$ we have
\eq{\label{pf:pres conv 1}\cR_{ml;k} = \ip{x_{k}}{e}h_{ml} + \ip{x-\cos\theta e}{e}h_{ml;k} - h^{r}_{k}\ip{e}{x_{r}}g_{ml} }
and the fact $\cR_{11;l} = 0$, which gives $\ip{x-\cos\theta e}{e}h_{11;l} = (\ka_{l}-\ka_{1})\ip{x_{l}}{e}$.	
We continue with
\eq{&\fr{2\ip{x+\cos\theta\nu}{e}\ip{x-\cos\theta e}{e}}{F^{2}}F^{kk}\sum_{l>D}\fr{(h_{1l;k})^{2}}{\ka_{l}-\ka_{1}}\\
=~&
\fr{2\ip{x+\cos\theta\nu}{e}\ip{x-\cos\theta e}{e}}{F^{2}}\sum_{k,l>D}F^{kk}\fr{(h_{1l;k})^{2}}{\ka_{l}-\ka_{1}}+ \fr{2\ip{x+\cos\theta\nu}{e}}{F^{2}\ip{x-\cos\theta e}{e}}\sum_{l>D}F^{11}(\ka_{l}-\ka_{1})\ip{x_{l}}{e}^{2},
}
which then yields
\eq{0&\geq  \fr{\ip{x+\cos\theta\nu}{e}}{F^{2}}F^{kl}h^{r}_{k}h_{rl}-\fr{\ip{x+\cos\theta\nu}{e}}{F}h_{11} + \tfr 12(\abs{x}^{2}+1-2\cos\theta\ip{x}{e})h_{11}-\ip{x-\cos\theta e}{\nu}	\\
		&\hp{=} + \ip{x-\cos\theta e}{e}\br{  \fr{2\ip{x_{1} + \cos\theta h^{1}_{1}x_{1}}{e}}{F^{2}}F_{;1}  - \fr{2\ip{x+\cos\theta\nu}{e}}{F^{3}}(F_{;1})^{2}+ \fr{\ip{x+\cos\theta\nu}{e}}{F^{2}}F^{kl,pq}h_{kl;1}h_{pq;1}}\\
		&\hp{=} + \fr{2\ip{x+\cos\theta\nu}{e}}{F^{2}\ip{x-\cos\theta e}{e}}(F^{11}-F^{ll})\ip{x_{l}}{e}^{2}(\ka_{l}-\ka_{1}) + \fr{2\ip{x+\cos\theta\nu}{e}\ip{x-\cos\theta e}{e}}{F^{2}}\sum_{k,l>D}\fr{F^{kk}(h_{1l;k})^{2}}{\ka_{l}-\ka_{1}}. }

Now we deal with the second derivatives of $F$. There holds, using $h_{kk;1} = 0$ for all $k\leq D$, \cite[Lemma~2.1.14]{Gerhardt:/2006}, and an estimate for inverse concave functions \cite[Thm.~2.1, Cor.~2.4]{Andrews:/2007},
\eq{F^{kl,pq}h_{kl;1}h_{pq;1}&=\fr{\del^{2}F}{\del \ka_{k}\del\ka_{l}}h_{kk;1}h_{ll;1} + 2\sum_{k>l}\fr{F^{kk}-F^{ll}}{\ka_{k}-\ka_{l}}(h_{kl;1})^{2}\\
						&=\sum_{k,l >D}\fr{\del^{2}F}{\del \ka_{k}\del\ka_{l}}h_{kk;1}h_{ll;1} + 2\sum_{l>D,k>l}\fr{F^{kk}-F^{ll}}{\ka_{k}-\ka_{l}}(h_{kl;1})^{2}+ 2\sum_{l\leq D,k>l}\fr{F^{kk}-F^{ll}}{\ka_{k}-\ka_{l}}(h_{kl;1})^{2}\\
						&=F^{kl,pq}\eta_{kl1}\eta_{pq1}+ 2\sum_{l\leq D,k>l}\fr{F^{kk}-F^{ll}}{\ka_{k}-\ka_{\ell}}(h_{kl;1})^{2}\\
						&\geq  \fr{2}{F}(F_{;1})^{2}-2\sum_{k,l>D}F^{kp}b^{l q}\eta_{kl1}\eta_{pq1}+ 2\sum_{l\leq D,k>l}\fr{F^{kk}-F^{ll}}{\ka_{k}-\ka_{l}}(h_{kl;1})^{2},}
where
\eq{\eta_{kl1} = \begin{cases} 0, &k\leq D~ \mbox{or}~ l\leq D\\
						h_{kl;1} & \mbox{else}.
			\end{cases} 
			}			
We obtain
\eq{0&\geq  \fr{\ip{x+\cos\theta\nu}{e}}{F^{2}}F^{kl}h^{r}_{k}h_{rl}-\fr{\ip{x+\cos\theta\nu}{e}}{F}h_{11} + \tfr 12(\abs{x}^{2}+1-2\cos\theta\ip{x}{e})h_{11}-\ip{x-\cos\theta e}{\nu}	\\
		&\hp{=} + \ip{x-\cos\theta e}{e}\left(  \fr{2\ip{x_{1} + \cos\theta h^{1}_{1}x_{1}}{e}}{F^{2}}F_{;1} + \fr{2\ip{x+\cos\theta\nu}{e}}{F^{2}}\sum_{l>D}\fr{\ka_{1}}{(\ka_{l}-\ka_{1})\ka_{l}}F^{ll}(h_{ll;1})^{2}\right)\\
		&= \fr{\ip{x+\cos\theta\nu}{e}}{F^{2}}F^{kl}h^{r}_{k}h_{rl}-\fr{\ip{x+\cos\theta\nu}{e}}{F}h_{11} +\tfr 12(\abs{x}^{2}+1-2\cos\theta\ip{x}{e})h_{11}-\ip{x-\cos\theta e}{\nu}	\\
		&\hp{=} +  \fr{2\ip{x+\cos\theta\nu}{e}\ip{x_{1}}{e}}{F^{2}}F_{;1} + \fr{2\ip{x-\cos\theta e}{e}\ip{x+\cos\theta\nu}{e}}{F^{2}}\sum_{l>D}\fr{\ka_{1}}{(\ka_{l}-\ka_{1})\ka_{l}}F^{ll}(h_{ll;1})^{2}, }
		where we also used the formula \eqref{pf:pres conv 1} to cancel the remaining term of the $F^{kl,pq}$ estimate.

Next, we deal with the remaining derivatives of curvature. If $\ka_1=0$, then $\ip{\nu}{e}=-1$ and $\ip{x_1}{e}=0$. Thus, 
the last two terms can be discarded. Otherwise,
we want to apply H\"older to control $\abs{F_{;1}}^{2}$ and therefore estimate
\eq{(F_{;1})^{2}\leq (\sum_{l>D}F^{ll}\abs{h_{ll;1}})^{2}&\leq \sum_{l>D}\fr{\ka_{1}}{(\ka_{l}-\ka_{1})\ka_{l}}F^{ll}(h_{ll;1})^{2} F^{kk}\fr{(\ka_{k}-\ka_{1})\ka_{k}}{\ka_{1}}, }
plug it in and obtain
\eq{0&\geq   \fr{\ip{x+\cos\theta\nu}{e}}{F^{2}}F^{kl}h^{r}_{k}h_{rl}-\fr{\ip{x+\cos\theta\nu}{e}}{F}h_{11} + \tfr 12(\abs{x}^{2}+1-2\cos\theta\ip{x}{e})h_{11}-\ip{x-\cos\theta e}{\nu}\\
		&\hp{=}+ \fr{\ip{x+\cos\theta\nu}{e}}{F^{2}}\left(  2\ip{x_{1}}{e}F_{;1} + 2\fr{1+\ip{\nu}{e}}{F^{kk}\ka_{k}^{2} - F\ka_{1}}(F_{;1})^{2}\right). }
Note that 
\eq{  2\ip{x_{1}}{e}F_{;1} + 2\fr{1+\ip{\nu}{e}}{F^{kk}\ka_{k}^{2} - F\ka_{1}}(F_{;1})^{2} \geq -\fr{F^{kk}\ka_{k}^{2}-F\ka_{1}}{2(1+\ip{\nu}{e})}\ip{x_{1}}{e}^{2}\geq -F^{kk}\ka_{k}^{2}+F\ka_{1},}
where we used $\langle x_1,e\rangle^2\leq 1-\langle\nu,e\rangle^2\leq 2(1+\langle\nu,e\rangle)$.
Hence we get
\eq{0&\geq  \tfr 12(\abs{x}^{2}+1- 2\cos\theta\ip{x}{e})\ka_{1}-\ip{x-\cos\theta e}{\nu}\\
	&= \tfr 12(\vert x-\cos\theta e\vert^2+1-\cos^2\theta)\ka_1-\ip{x-\cos\theta e}{\nu}.}
In case $\nu=-e$, we have $\kappa_1=0$ and 
$$-\ip{x-\cos\theta e}{\nu}=\langle x,e\rangle-\cos\theta\geq\delta_0>0$$ by \autoref{height est}, which yields a contradiction. Otherwise, we have $\ka_1>0$. Combining this with \autoref{lemma:support function estimate}, we again obtain a contradiction.

{\bf Case 2:} $\xi_{0}\in \del \Si_{T^\star}$. To exclude this case, using the notation $\ka_{\mu} = \ip{A(\mu)}{\mu}$, also see \cite[Prop.~2.5(4)]{WengXia:10/2022}, we calculate in case $x_{1}\perp\mu$,
\eq{\n_{\mu}\cR_{11}&=\ip{\mu}{e}(\ka_{1}-\ka_{\mu}) + \ip{x-\cos\theta e}{e}\n_{\mu}h_{11}\\
					&=\br{\ip{x-\cos\theta e}{e}\br{\cot\theta\ka_{1}+\tfr{1}{\sin\theta}}-\ip{\mu}{e}}(\ka_{\mu}-\ka_{1})\\
					&=\br{\ip{x-\cos\theta e}{e}\br{\cot\theta\ka_{1}+\tfr{1}{\sin\theta}}-\ip{\tfr{x}{\sin\theta} + \cot\theta\nu}{e}}(\ka_{\mu}-\ka_{1})\\
					&=\br{\cot\theta\ip{x-\cos\theta e}{e}\ka_{1}-\cot\theta(1+\ip{\nu}{e}}(\ka_{\mu}-\ka_{1})\\
					&=\cot\theta\cR_{11}(\ka_{\mu}-\ka_{1}).
			}
Hence this case is excluded by the Hopf-lemma. Otherwise, i.e. $\ka_{\mu}=\ka_{1}$, we use $\n_{\mu}F = 0$, see \cite[Prop.~3.6]{WengXia:10/2022}, and obtain for an orthonormal frame $(e_{\al})$ of $T\del\Si$,
\eq{\label{boundaryidentity}0=F^{\mu\mu}h_{\mu\mu;\mu} + F^{\al\al}h_{\al\al;\mu} = F^{\mu\mu}h_{\mu\mu;\mu} + F^{\al\al}(\cot\theta \ka_{\al} + \tfr{1}{\sin\theta})(\ka_{\mu}-\ka_{\al}),}
which implies $h_{\mu\mu;\mu}\geq 0$ and in turn $\cR_{\mu\mu;\mu}\geq 0$.

The conclusion $T^\star = T$ now follows from the fact that the set of times up to $T$, where $\Si_t$ is strictly horocap-convex, is open and closed, as we have proven in the three previous results. 
}

\section{Further curvature estimates}

To obtain regularity for the flow, we need control on the curvature function, the ellipticity constant and the full second fundamental form.
We start with the curvature function $F$.

\begin{lemma}
Along the flow \eqref{eq:flow} there holds
\eq{\label{eq:ev F}\cL F  &=  \fr{2F^{ij}}{F^2}\ip{x_{j} + \cos\theta h^{k}_{j}x_{k}}{e}F_{;i} - \fr{2\ip{x+\cos\theta\nu}{e}}{F^{3}}F^{ij}F_{;i}F_{;j}\\
		&\hp{=}+ \fr{\ip{x}{e}}{F}(F^{2}-F^{kl}h^{r}_{k}h_{rl})+(1-F^{ij}g_{ij})\ip{\nu}{e}. }
\end{lemma}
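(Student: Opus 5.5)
The plan is to contract the already established evolution equation \eqref{eq:ev h} with $F^{ij}$ and use the homogeneity of $F$. By the chain rule, $\del_t F = F^{ij}\del_t h_{ij} + \fr{\del F}{\del g_{ij}}\del_t g_{ij}$, and for a function of the Weingarten map we have $\fr{\del F}{\del g_{ij}} = -F^{ik}h_k^j$. Writing $\psi := \fr{\ip{x+\cos\theta\nu}{e}}{F}-\ip{X_e}{\nu}$, the metric contribution is $-2\psi F^{ik}h_k^jh_{ij}$. The $\cT$-terms from $\del_t g$ and $\del_t h$ combine to $\cT^k F_{;k}$, which cancels with the transport term of $\cL$. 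Concretely, $F^{ij}(h^k_j\cT_{k;i}+h^k_i\cT_{k;j}) - F^{ik}h^j_k(\cT_{i;j}+\cT_{j;i}) = 0$, since $F^{ij}$ commutes with $h$. It is convenient to start not from \eqref{eq:ev h} but from \eqref{ev-h-eq1} together with the Hessian expansion in its proof, since the quadratic terms there still appear in the form $\psi h_{ik}h^k_j$ before the Simons identity is used.

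The key steps are as follows.

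(a) Contracting $\fr{\ip{x+\cos\theta\nu}{e}}{F^{2}}F^{kl}h_{ij;kl}$ with $F^{ij}$ gives $\fr{\ip{x+\cos\theta\nu}{e}}{F^{2}}F^{kl}F_{;kl}-\fr{\ip{x+\cos\theta\nu}{e}}{F^{2}}F^{kl}F^{ij,pq}h_{ij;k}h_{pq;l}$. The first piece is exactly the diffusion part of $\cL F$.

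(b) The first-derivative terms $F^{ij}\fr{\ip{x_j+\cos\theta h^k_j x_k}{e}}{F^2}F_{;i}$ (which appears twice) and $-\fr{2\ip{x+\cos\theta\nu}{e}}{F^3}F^{ij}F_{;i}F_{;j}$ give the first line of the claim directly.

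(c) The term $-\cos\theta\fr{\ip{h^k_{i;j}x_k}{e}}{F}$ contracts to $-\fr{\cos\theta}{F}\ip{\n F}{e}$, and $h^k_{i;j}\ip{X_e}{x_k}$ contracts to $\ip{X_e}{\n F}$. Both are absorbed by the gradient term $\ip{X_e-\fr{\cos\theta}{F}e}{\n F}$ in $\cL$.

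(d) The zeroth-order terms are treated using $F^{ij}h_{ij}=F$. The contributions are:
\begin{itemize}
\item $\fr{\ip{\nu}{e}}{F}F$ from the $-h_{ij}\nu$ term;
\item $\fr{\cos\theta\ip{\nu}{e}}{F}F^{ij}h^k_ih_{kj}$;
\item $\fr{\ip{x+\cos\theta\nu}{e}}{F^2}(F^{kl}h^r_kh_{rl}F - F F^{ij}h^k_ih_{kj})=0$;
\item $\ip{x}{e}F$;
\item $-\ip{X_e}{\nu}F^{ij}h^k_ih_{kj}$ from the Hessian of $\ip{X_e}{\nu}$;
\item $-\ip{e}{\nu}F^{ij}g_{ij}$;
\item the antisymmetric terms $h^k_j(\ip{x_i}{e}\ip{x}{x_k}-\ip{x_i}{x}\ip{e}{x_k})$, which vanish after contraction with $F^{ij}$ because $F^{ij}h^k_j$ is symmetric in $i,k$.
\end{itemize}
To these we add $\psi F^{ij}h_{ik}h^k_j$ from \eqref{ev-h-eq1} and $-2\psi F^{ij}h_{ik}h^k_j$ from the metric. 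Collecting the coefficients of $F^{ij}h_{ik}h^k_j$ gives
$$\fr{\cos\theta\ip{\nu}{e}}{F} - \ip{X_e}{\nu} - \psi = -\fr{\ip{x}{e}}{F}.$$
Together with $\ip{\nu}{e}+\ip{x}{e}F-\ip{\nu}{e}F^{ij}g_{ij}$, this yields $\fr{\ip{x}{e}}{F}(F^2-F^{kl}h^r_kh_{rl})+(1-F^{ij}g_{ij})\ip{\nu}{e}$.

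The main obstacle is step (a). The second-derivative terms do not simply cancel: contracting the Hessian expansion with $F^{ij}$ produces $\fr{\ip{x+\cos\theta\nu}{e}}{F^2}\big(F^{ij}F^{kl,pq}h_{kl;i}h_{pq;j}-F^{kl}F^{ij,pq}h_{ij;k}h_{pq;l}\big)$. This vanishes only because the index roles are symmetric once Codazzi $h_{kl;i}=h_{ki;l}$ is applied, so that both expressions equal $F^{ij}F^{kl,pq}h_{kl;i}h_{pq;j}$. I would check this carefully. The cleanest route is to avoid the Simons identity altogether: write $-\psi_{;ij}$ directly as $\fr{\ip{x+\cos\theta\nu}{e}}{F^2}F_{;ij}+\dots$ and note that $F^{ij}F_{;ij}$ is precisely the operator in $\cL$, so no $F^{kl,pq}$ terms arise. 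The sign and coefficient bookkeeping in step (d) is the other place where errors are likely, and I would verify it on a spherical cap, where $\cL F$ must vanish for the stationary solutions.
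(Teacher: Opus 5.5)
Your proposal is correct and is essentially the paper's argument: the paper writes $\cL F = F^{ij}\cL h_{ij} - \fr{\ip{x+\cos\theta\nu}{e}}{F^2}F^{ij}F^{kl,rs}h_{kl;i}h_{rs;j} + \fr{\del F}{\del g_{ij}}\del_t g_{ij}$ with $\fr{\del F}{\del g_{ij}} = -F^{ik}h^j_k$, contracting the final form \eqref{eq:ev h} rather than going back to \eqref{ev-h-eq1}; the $\cT$-terms and the $F^{kl,pq}$-terms then cancel, and the zeroth-order terms collect exactly as in your step (d). The only inaccuracy is in your step (a): the two $F^{kl,pq}$-expressions coincide simply by renaming the dummy index pairs $(i,j)\leftrightarrow(k,l)$, so Codazzi is not needed there; where you actually use Codazzi is step (c), to get $F^{ij}h^k_{i;j}x_k = \n F$.
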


\pf{
Using \eqref{eq:ev h}, there holds
\eq{
\cL F&=F^{ij}\cL h_{ij}-\fr{\ip{x+\cos\theta\nu}{e}}{F^2}F^{ij}F^{kl,rs}h_{kl;i}h_{rs;j} + \fr{\del F}{\del g_{ij}}\del_t g_{ij}\\
&=2F^{ij}h^{k}_{j}\cT_{k;i}+  \fr{2F^{ij}}{F^2}\ip{x_{j} + \cos\theta h^{k}_{j}x_{k}}{e}F_{;i} - \fr{2\ip{x+\cos\theta\nu}{e}}{F^{3}}F^{ij}F_{;i}F_{;j}+ \fr{\ip{x+\cos\theta\nu}{e}}{F}F^{kl}h^{r}_{k}h_{rl}\\
		&\hp{=}+\ip{\nu}{e} + \fr{\cos\theta \ip{\nu}{e}}{F}F^{ij}h^{k}_{i}h_{kj}   + \ip{x}{e}F - 2\ip{X_{e}}{\nu}F^{ij}h_{i}^{k}h_{kj} - \ip{e}{\nu}F^{ij}g_{ij} + \fr{\del F}{\del g_{ij}}\del_{t}g_{ij}\\
		&=  \fr{2F^{ij}}{F^2}\ip{x_{j} + \cos\theta h^{k}_{j}x_{k}}{e}F_{;i} - \fr{2\ip{x+\cos\theta\nu}{e}}{F^{3}}F^{ij}F_{;i}F_{;j}\\
		&\hp{=}+ \fr{\ip{x}{e}}{F}(F^{2}-F^{kl}h^{r}_{k}h_{kl})+(1-F^{ij}g_{ij})\ip{\nu}{e},}
where we used
\eq{\fr{\del F}{\del g_{ij}} = -F^{ik}h^{j}_{k},}
see \cite[Equ.~(2.1.150)]{Gerhardt:/2006}.
}

Now we provide the lower bound for the curvature function.

\begin{prop}\label{prop:est-F}
Along the flow \eqref{eq:flow} starting from a strictly $\theta$-horocap-convex hypersurface, the function $F$ is bounded from below by a positive constant depending on initial data.
\end{prop}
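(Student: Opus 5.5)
The lower bound will come from the maximum principle applied to the evolution \eqref{eq:ev F}, but not for $F$ alone: the zero-order terms $\fr{\ip{x}{e}}{F}(F^2-F^{kl}h^r_kh_{rl})$ and $(1-F^{ij}g_{ij})\ip{\nu}{e}$ have no good sign as $F\to0$. The horocap-convexity of \autoref{lemma: R>0 pres} and the height bound of \autoref{height est} should supply the missing control.

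Along the flow we have $\cR>0$ and $\ip{x-\cos\theta e}{e}\geq\de_0$ for all $t<T$. Hence
\[
\ka_i\geq \fr{1+\ip{\nu}{e}}{\ip{x-\cos\theta e}{e}}\geq 0
\]
for every $i$. The cleanest route is therefore to bound from below the auxiliary quantity
\[
\Phi=\fr{\ip{x+\cos\theta\nu}{e}}{F}-\ip{X_e}{\nu},
\]
or rather to bound from above the normal speed $\fr{\ip{x+\cos\theta\nu}{e}}{F}$, since a bound on it gives the lower bound for $F$. First note that $\ip{x+\cos\theta\nu}{e}\geq \ip{x}{e}-\cos\theta\geq\de_0$, so this numerator is positive, but that alone does not bound $F$.

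Set $\chi=\ip{x+\cos\theta\nu}{e}/F$, the speed. I would compute $\cL\chi$, or more standardly $\cL\Phi$, using \eqref{eq:ev F} together with \eqref{eq:ev height} and \eqref{eq:ev dual height}. For locally constrained flows of this type, $\Phi$ satisfies a linear equation of the form
\[
\cL\Phi=\fr{\ip{x+\cos\theta\nu}{e}}{F^{2}}F^{ij}h_i^kh_{kj}\,\Phi+(\text{gradient terms})+(\text{lower order}).
\]
This is exactly the time derivative of a normal speed, $\del_t\Phi=\mathcal{J}\Phi$ for the linearized operator, as in Guan/Li, Wang/Xia and Weng/Xia. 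The key structural point is that $\Phi$ solves the linearized equation. At an interior maximum of $\chi$, the term $-\fr{\ip{x}{e}}{F}F^{kl}h^r_kh_{rl}$ contributes the good sign. By concavity, $F^{kl}h^r_kh_{rl}\geq F^2$, since $F^{ij}g_{ij}\geq1$ and we may use Cauchy--Schwarz. By horocap-convexity, $F^{kl}h^r_kh_{rl}\geq \ka_1F$, with $\ka_1$ bounded below in terms of $1+\ip{\nu}{e}$. I expect that at a large maximum of $\chi$ the leading term is $-c\chi^2$, with $c>0$ depending on $\de_0$, while the remaining terms grow at most linearly in $\chi$. That yields $\chi\leq C$.

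On the boundary I would use the capillary identities. From \cite[Prop.~3.6]{WengXia:10/2022} we have $\n_\mu F=0$, and one computes $\n_\mu\ip{x+\cos\theta\nu}{e}=\ip{\mu+\cos\theta\ka_\mu\mu}{e}$. Combined with $\mu=\tfr{x}{\sin\theta}+\cot\theta\nu$, this should show that $\n_\mu\chi$ is a positive multiple of $\chi$ times a bounded factor, with a sign that rules out a boundary maximum by Hopf. If the sign is bad, I would instead multiply $\chi$ by a weight $e^{\al\ip{x}{e}}$ or $(1+\ip{\nu}{e})^{-1}$-type factor to make the Neumann derivative non-positive. Handling the boundary is where I expect the main obstacle.

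The interior term $(1-F^{ij}g_{ij})\ip{\nu}{e}$ is bounded, since $F^{ij}g_{ij}\leq n$ for concave $F$ normalized as in \autoref{assum:F}. The gradient terms vanish at critical points after the usual rearrangement. The conclusion is $F\geq \de_0/C>0$.
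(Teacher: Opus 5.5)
There is a genuine gap in the interior step, and the boundary step as you set it up does not give a contradiction.

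\textbf{Interior.} Write $N=\ip{x+\cos\theta\nu}{e}$ and $\chi=N/F$. From \eqref{eq:ev height}, \eqref{eq:ev dual height} and \eqref{eq:ev F}, at a critical point of $\chi$ one finds
\[
\cL\chi=\fr{N\ip{\nu}{e}}{F^{2}}\br{1+F^{ij}g_{ij}}+\fr{N^{2}}{F^{3}}F^{kl}h^{r}_{k}h_{rl}-2\ip{x}{e}\chi+\fr{1}{2F}\br{1+\abs{x}^{2}+2\cos\theta\ip{x}{\nu}}.
\]
The term $-\fr{\ip{x}{e}}{F}F^{kl}h^r_kh_{rl}$ of $\cL F$ enters $\cL\chi$ with the factor $-N/F^2$. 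Together with the corresponding term from $\cos\theta\,\cL\ip{\nu}{e}$, it produces $+\fr{N^2}{F^3}F^{kl}h^r_kh_{rl}\geq N\chi$. This is the bad sign for an upper bound on $\chi$, not the good one. It is of lower order only if $F^{kl}h^r_kh_{rl}\leq CF^2$. That holds for $H_k/H_{k-1}$, but not for the class of \autoref{assum:F}: for $F=H_n^{1/n}$ one has $F^{kl}h^r_kh_{rl}=FH_1$, and no curvature bound is available at this stage.

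The only potentially negative quadratic term is the first one. It is built from $\fr{2N}{F}\ip{\nu}{e}$ in \eqref{eq:ev height} and from the term $(1-F^{ij}g_{ij})\ip{\nu}{e}$ that you set aside as bounded. That term is in fact not bounded in general: $F^{ij}g_{ij}\leq n$ fails for $H_n^{1/n}$. The sign and size of this quadratic term are governed by $-\ip{\nu}{e}$, not by $\de_0$, and it is useless where $\ip{\nu}{e}\geq 0$. Separately, your inequality $F^{kl}h^r_kh_{rl}\geq F^2$ is true, but it comes from inverse concavity; Cauchy--Schwarz only yields $F^2/F^{ij}g_{ij}$.

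\textbf{What is missing.} The paper's argument rests on two ideas your plan lacks.

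First, on $\{\ip{\nu}{e}\geq-\tfr12\}$, horocap-convexity alone gives $F\geq\ka_1\geq(1+\ip{\nu}{e})/\ip{x-\cos\theta e}{e}\geq\tfr14$. So only $\{\ip{\nu}{e}<-\tfr12\}$ matters.

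Second, on that set the paper bounds $w=\chi/(-\ip{\nu}{e})$ rather than $\chi$. By \eqref{eq:ev dual height}, $\ip{\nu}{e}$ carries the same unbounded zeroth-order coefficient $\fr{N}{F^2}F^{ij}h^k_ih_{kj}$ as the speed. This is how your remark on the linearized operator has to be used: divide by a second solution. Then in $\cL\log w$ all $F^{kl}h^r_kh_{rl}$-terms cancel exactly at a critical point. What remains is $\fr{\ip{\nu}{e}}{F}(1+F^{ij}g_{ij})+\fr{\abs{e^{\top}}^2}{F\ip{\nu}{e}}$ plus bounded terms, so $\cL\log w\leq-\fr1F+C$ there. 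No control of $F^{ij}g_{ij}$ or of $F^{kl}h^r_kh_{rl}/F^2$ is needed.

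\textbf{Boundary.} Your computation does give $\n_\mu\chi=\fr{1+\cos\theta\ka_\mu}{\sin\theta}\chi>0$, since $\ip{\mu}{e}=N/\sin\theta$ on $\del\Si_t$. But a positive outward derivative is exactly what the Hopf lemma predicts at a boundary maximum, so nothing is excluded.

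The paper needs no Hopf argument here. On the boundary, $\nu=-\cos\theta\,x+\sin\theta\,\bar\nu$. Combine this with $\ip{x}{e}=\cos\al\geq\cos\al_0>\cos\theta$ from \autoref{height est} and with $\ip{x}{e}^2+\ip{\bar\nu}{e}^2\leq1$. This gives $1+\ip{\nu}{e}\geq1-\cos(\al_0-\theta)>0$. Horocap-convexity then yields $F\geq\ka_1\geq\fr{1-\cos(\al_0-\theta)}{\ip{x}{e}-\cos\theta}$ pointwise on $\del\Si_t$.
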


\pf{
On the set 
\eq{G=\{(t,\xi)\in [0,T)\x \bbB^{n}\cn \ip{\nu}{e}\geq  -1/2\}}
there holds
\eq{F\geq \ka_{1}\geq \tfr14,}
due to the monotonicity of $F$ and the $\theta$-horocap-convexity. Hence, in order to bound $F$ from below, we may consider the function
\eq{w = \fr{\ip{x+\cos\theta \nu}{e}}{-\ip{\nu}{e}F}}
on $\ti G=([0,T)\x \bbB^{n})\bs G$,
and observe that an upper bound on $w$ implies a lower bound on $F$. Suppose that $w$ attains a spatial maximum at some $(t_{0},\xi_{0})\in[0,T)\x\del\bbB^{n}$, then from the $\theta$-horocap-convexity we get
\eq{\ka_{1}\geq \fr{1+\ip{\nu}{e}}{\ip{x-\cos\theta e}{e}} = \fr{1-\cos\theta \ip{x}{e} + \sin\theta\ip{\bar\nu}{e}}{\ip{x}{e}-\cos\theta}\geq \fr{1-\cos(\al-\theta)}{\ip{x}{e}-\cos\theta} \geq  \fr{1-\cos(\al_{0}-\theta)}{\ip{x}{e}-\cos\theta},}
where we used $\ip{x}{e}=:\cos\al\geq\cos \al_{0}>\cos\theta$ due to the presence of the initial barrier and
\eq{1 = \abs{e}^{2}\geq \ip{x}{e}^{2} + \ip{\bar \nu}{e}^{2}.}
Hence, on the boundary $w$ is uniformly bounded and we may restrict the investigation to spatial interior maxima. Therefore, we need the evolution equation of $w$ and it is computationally convenient to consider $\log w$ instead. Using \eqref{eq:ev height}, \eqref{eq:ev dual height} and \eqref{eq:ev F}, we have
\eq{\cL \log w &= \cL \log \ip{x+\cos\theta \nu}{e} - \cL\log(-\ip{\nu}{e}) - \cL\log F\\
			&=\fr{1}{F^{2}\ip{x+\cos\theta\nu}{e}}F^{ij}\ip{x_{;i}+\cos\theta \nu_{;i}}{e}\ip{x_{;j}+\cos\theta\nu_{;j}}{e} - \fr{\ip{x+\cos\theta\nu}{e}}{F^{2}\ip{\nu}{e}^{2}}F^{ij}\ip{\nu_{;i}}{e}\ip{\nu_{;j}}{e}\\
			&\hp{=} - \fr{\ip{x+\cos\theta\nu}{e}}{F^{4}}F^{ij}F_{;i}F_{;j} + \fr{\cL \ip{x}{e}+\cos\theta \cL\ip{\nu}{e}}{\ip{x+\cos\theta\nu}{e}}- \fr{\cL\ip{\nu}{e}}{\ip{\nu}{e}} - \fr{\cL F}{F}\\
			&= \fr{1}{\ip{x+\cos\theta\nu}{e}}\br{-\ip{X_{e}}{e}+\cos\theta (\ip{x}{\nu} - \ip{x}{e}\ip{\nu}{e})}+ \fr{1}{\ip{\nu}{e}}\br{ \fr{\abs{e^{\top}}^{2}}{F}-\ip{x}{\nu}}\\
			&\hp{=}+\fr{1}{F}(1+F^{ij}g_{ij})\ip{\nu}{e},
			  }
where we have already included obvious cancellations and also used 
\eq{\fr{F_{;i}}{F} =  \fr{\ip{x_{;i}+\cos\theta \nu_{;i}}{e}}{\ip{x+\cos\theta \nu}{e}} - \fr{\ip{\nu}{e}_{;i}}{\ip{\nu}{e}}, }
which holds at a spatial interior maximum of $w$. Due to strict negativity of $\ip{\nu}{e}$, the maximum principle implies the result.
} 

We continue with the second fundamental form.

\begin{prop}\label{prop:time kappa bound}
Along the flow \eqref{eq:flow} starting from a strictly $\theta$-horocap-convex hypersurface, the second fundamental form is bounded by a constant depending on initial data and $T$.
\end{prop}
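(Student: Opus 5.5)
We aim for a time-dependent upper bound on the largest principal curvature $\ka_n$; since $\Si_t$ is convex ($h\geq 0$) by \autoref{lemma: R>0 pres}, this bounds all of $h$. We work with $\ka_n$ directly, in the viscosity sense at a maximum point (or through $\log h_{\mu\nu}\zeta^\mu\zeta^\nu$ maximised over unit vectors $\zeta$). On this quantity we run the maximum principle with the auxiliary exponential factor $e^{-Ct}$. The ingredients from earlier in the paper are the following. By \autoref{height est}, $\ip{x-\cos\theta e}{e}\geq\de_0$. By \autoref{prop:est-F}, $F\geq c_0>0$. The coefficient $\ip{x+\cos\theta\nu}{e}$ is bounded and, we claim, bounded below by a positive constant. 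Indeed, where $\ip{\nu}{e}\geq -1/2$ this is at least $\ip{x}{e}-\tfr12\cos\theta\geq \cos\theta/2+\de_0$. In the other region the quantity $w$ is controlled, because the proof of \autoref{prop:est-F} bounds $w$ from above, so $\ip{x+\cos\theta\nu}{e}\leq C$. A lower bound there still has to be checked. Finally, by concavity and homogeneity, $F\leq \ka_n$ and $\sum F^{ii}\geq 1$.

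\textbf{Interior maximum.} Fix a point where $\ka_n=h_{nn}$ attains a new maximum at an interior point. Evaluating \eqref{eq:ev h} at $i=j=n$, we treat the terms as follows.
\begin{itemize}
\item[(a)] The term $F^{kl,pq}h_{kl;n}h_{pq;n}\leq 0$ is discarded by concavity of $F$.
\item[(b)] The gradient terms are $\fr{2\ip{x_n+\cos\theta\ka_n x_n}{e}}{F^2}F_{;n}-\fr{2\ip{x+\cos\theta\nu}{e}}{F^3}(F_{;n})^2$. For fixed $F_{;n}$ this is a concave quadratic, so it is bounded above by $\fr{F}{2\ip{x+\cos\theta\nu}{e}}\ip{x_n}{e}^2(1+\cos\theta\ka_n)^2/F^2\leq C(1+\ka_n^2)$, using $F\geq c_0$ and the positive lower bound on $\ip{x+\cos\theta\nu}{e}$.
\item[(c)] The zero-order terms are $\fr{\ip{x+\cos\theta\nu}{e}}{F^2}F^{kl}h_k^rh_{rl}h_{nn}$, $\fr{\cos\theta\ip{\nu}{e}}{F}\ka_n^2$, $-2\ip{X_e}{\nu}\ka_n^2$, and the remaining terms, which are linear in $\ka_n$.
\end{itemize}

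The dangerous term in (c) is $\fr{\ip{x+\cos\theta\nu}{e}}{F^2}F^{kl}h^r_kh_{rl}\ka_n$. This is cubic in general, since $F^{kl}h^r_kh_{rl}$ can be as large as $F\ka_n$. This is also where horocap-convexity enters. At such a point the lower bound $\ka_1\geq \fr{1+\ip{\nu}{e}}{\ip{x-\cos\theta e}{e}}$ only degenerates where $\nu=-e$. To handle this we use the standard trick: instead of $\ka_n$, consider $\ka_n/F$ or $\log\ka_n-\log F$, and subtract \eqref{eq:ev F}. The term $\fr{\ip{x}{e}}{F}(F^2-F^{kl}h^r_kh_{rl})$ in $\cL F/F$ produces $-\fr{\ip{x}{e}}{F^2}F^{kl}h^r_kh_{rl}$. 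Its coefficient does not match $\ip{x+\cos\theta\nu}{e}/F^2$ exactly, so after cancellation the remaining cubic part carries the coefficient $\cos\theta\ip{\nu}{e}/F^2$. Since $\ip{\nu}{e}$ has no sign, this is the main obstacle.

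\textbf{The main obstacle.} The first attempt is to use the time-dependent bound honestly. Bound the leftover by $C F^{kl}h^r_kh_{rl}/F^2\leq C\ka_n/F$ per unit of $\ka_n$, using $F^{kl}h^r_kh_{rl}\leq F\ka_n$ for concave, $1$-homogeneous $F$ on convex hypersurfaces. With $F\geq c_0$, this reduces the inequality to $\fr{d}{dt}\log(\ka_n/F)\leq C\,\ka_n$. That gives only a Riccati-type bound, not a bound up to a general $T$. The fix is to note that when the cubic term has a positive sign, $\cos\theta\ip{\nu}{e}>0$. There $\ip{\nu}{e}\geq 0$, so $\ka_1\geq c$ by horocap-convexity and \autoref{height est}. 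Inverse concavity then gives $F\geq c\,\ka_n\cdot$const, which makes $\ka_n/F$ bounded outright. In the complementary region the cubic term has a favourable sign. Either way we obtain $\ka_n/F\leq Ce^{Ct}$. Upper bounds for $F$ follow similarly from \eqref{eq:ev F}, where $-\fr{\ip{x}{e}}{F}F^{kl}h^r_kh_{rl}\leq 0$, yielding $F\leq Ce^{Ct}$.

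\textbf{Boundary maximum.} At a boundary point, where $\mu$ is the conormal, we must exclude a boundary maximum. If $\ka_n$ is tangential, we use \cite[Prop.~2.5]{WengXia:10/2022}: $h_{\al\al;\mu}=(\cot\theta\ka_\al+\tfr1{\sin\theta})(\ka_\mu-\ka_\al)\leq 0$ when $\ka_\al$ is maximal. If the maximum is attained in the $\mu$ direction, we use \eqref{boundaryidentity}, which gives $h_{\mu\mu;\mu}\leq0$ when $\ka_\mu$ is largest. Combined with $\n_\mu F=0$, both cases are compatible with the Hopf lemma.
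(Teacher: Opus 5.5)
You have a genuine gap, located exactly at the step you call the main obstacle.

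\textbf{What fails in the proposal.} The proposed fix is false. Inverse concavity together with $\ka_1\geq c$ does not give $F\geq c\,\ka_n$. Take $F=H_n/H_{n-1}=n\big/\sum_i\ka_i^{-1}$, which satisfies \autoref{assum:F}. With $\ka_1=\dots=\ka_{n-1}=c$ and $\ka_n\to\infty$, $F$ stays bounded, so $\ka_n/F$ is unbounded.

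The cancellation you claim for $\log\ka_n-\log F$ also has the wrong sign. Since $\cL F$ contains $-\fr{\ip{x}{e}}{F}F^{kl}h^r_kh_{rl}$, the term $-\cL\log F$ contributes $+\fr{\ip{x}{e}}{F^2}F^{kl}h^r_kh_{rl}$. The coefficients therefore add up to $\ip{x+\cos\theta\nu}{e}+\ip{x}{e}$; they do not leave $\cos\theta\ip{\nu}{e}$.

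Your bound in (b), $C(1+\ka_n^2)$, produces a Riccati term that nothing in your argument absorbs. Even in your ``favourable'' region you would only get a short-time bound.

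The lower bound you left open is trivial: $\ip{x+\cos\theta\nu}{e}\geq\ip{x}{e}-\cos\theta\geq\de_0$, because $\ip{\nu}{e}\geq-1$.

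\textbf{Where the obstacle comes from.} It is an artefact of evaluating \eqref{eq:ev h} for $h_{nn}$ rather than computing the evolution of the eigenvalue, i.e.\ of $h^n_n$. The latter contains the extra term $-\del_t g_{nn}\,\ka_n=-2\br{\fr{\ip{x+\cos\theta\nu}{e}}{F}-\ip{X_e}{\nu}}\ka_n^2$, plus $\cT$-terms that cancel. This removes your $-2\ip{X_e}{\nu}\ka_n^2$ and turns all quadratic zero-order terms into
\[\fr{\ip{x+\cos\theta\nu}{e}}{F^2}\br{F^{kl}h^r_kh_{rl}-F\ka_n}\ka_n-\fr{\ip{x}{e}}{F}\ka_n^2 .\]
The first part is nonpositive, by the inequality $\sum_kF^{kk}\ka_k^2\leq F\ka_n$ that you already quote and by positivity of $\ip{x+\cos\theta\nu}{e}$.

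The gradient terms must then be estimated sharply, not by $C(1+\ka_n^2)$. Complete the square, then use $\ip{x_n}{e}^2\leq 1-\ip{\nu}{e}^2$ and $\ip{x+\cos\theta\nu}{e}\geq\cos\theta(1+\ip{\nu}{e})$. This shows the gradient terms are at most $\fr{\cos\theta}{F}\ka_n^2+\fr{c(1+\ka_n)}{F}$. That is dominated by $-\fr{\ip{x}{e}}{F}\ka_n^2$, since $\ip{x}{e}\geq\cos\theta+\al$.

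This is how the paper obtains $\cL h^n_n\leq-\fr{\al}{F}(\ka_n^2-c\ka_n-c)+\ka_n+1$ at large interior maxima, and hence $\ka_n\leq Ce^{t}$. It needs no auxiliary quotient $\ka_n/F$, no lower or upper bound on $F$ (only $F>0$), and no inverse concavity. Your boundary argument is fine and coincides with the paper's.
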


\pf{
From the identity 
$$\cL h^{i}_{j} = g^{ik}\cL h_{kj}  - g^{im}\del_{t}g_{mr} h^{r}_{j},$$ we work in normal coordinates where $h$ is diagonal. Using \eqref{eq:ev h} and the fact  $\ip{x}{e}>\cos\theta$, we obtain
\eq{\cL h^{n}_{n}&= \fr{2}{F^{2}}(1+\cos\theta\ka_{n})\ip{x_{n}}{e}F_{;n} - \fr{2}{F^{3}}\ip{x+\cos\theta \nu}{e}(F_{;n})^{2} + \fr{\ip{x+\cos\theta \nu}{e}}{F^{2}}F^{kl,pq}h_{kl;n}h_{pq;n}\\
		&\hp{=}+\fr{\ip{x+\cos\theta\nu}{e}}{F^{2}}F^{kl}h^{r}_{k}h_{rl}\ka_{n}+\fr{\ip{\nu}{e}}{F}\ka_{n}-\fr{2}{F}\ip{x}{e}\ka_{n}^{2} - \fr{\cos\theta\ip{\nu}{e}}{F}\ka_{n}^{2} + \ip{x}{e}\ka_{n}-\ip{e}{\nu}\\
		&\leq \fr{(1+\cos\theta \ka_{n})^{2}\ip{x_{n}}{e}^{2}}{2F\ip{x+\cos\theta \nu}{e}}+\fr{\ip{x+\cos\theta\nu}{e}}{F^{2}}(F^{kl}h^{r}_{k}h_{rl}-F\ka_{n})\ka_{n}+\fr{\ip{\nu}{e}}{F}\ka_{n}-\fr{1}{F}\ip{x}{e}\ka_{n}^{2}\\
		&\hp{=} + \ip{x}{e}\ka_{n}-\ip{e}{\nu},}
where we also used the concavity of $F$.
Now
\eq{\label{pf:time kappa bound 1}\fr{\cos^{2}\theta\ip{x_{n}}{e}^{2}}{2\ip{x+\cos\theta\nu}{e}}\leq \fr{\cos^{2}\theta(1-\ip{\nu}{e}^{2})}{2\cos\theta(1+\ip{\nu}{e})}\leq \cos\theta}
and we obtain with the help of $\ip{x}{e}\geq \cos\theta + \al$ for some $\al>0$,
\eq{\cL h^{n}_{n}\leq -\fr{\al}{F}(\ka_{n}^{2} - c\ka_{n}-c) + \ka_{n} + 1, }
where $c$ depends only  on initial data. 
We obtain time-dependent bound on $\ka_{n}$, because 
at the boundary we have
\eq{\n_{\mu}h_{nn} = \br{\cot\theta\ka_{n}+\tfr{1}{\sin\theta}}(\ka_{\mu}-\ka_{n})} 
in case $x_{n}\neq \pm\mu$ and otherwise
\eq{h_{\mu\mu;\mu} = -\fr{F^{\al\al}{}h_{\al\al;\mu}}{F^{\mu\mu}}=-\fr{F^{\al\al}\br{\cot\theta\ka_{\al}+\tfr{1}{\sin\theta}}(\ka_{\mu}-\ka_{\al})}{{F^{\mu\mu}}} \leq 0.}
Hence $\ka_{n}$ will not attain large boundary maxima. 
}

\begin{cor}
The flow \eqref{eq:flow} starting from a strictly $\theta$-horocap-convex hypersurface exists smoothly for all times.
\end{cor}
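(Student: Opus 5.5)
We argue by contradiction: suppose the maximal existence time $T$ of the flow \eqref{eq:flow} is finite. By \autoref{lemma: R>0 pres} we have $T^\star=T$, so every $\Si_t$, $t<T$, is strictly $\theta$-horocap-convex. By \autoref{height est} there is the uniform height bound $\ip{x}{e}-\cos\theta\geq\de_0$, and by the preceding lemma the interior of $\Si_t$ stays inside the open ball. \autoref{prop:est-F} gives a uniform positive lower bound on $F$, and \autoref{prop:time kappa bound} bounds $\abs{h}$ by a constant that depends on $T$, which is finite here. Consequently the principal curvatures stay in a compact subset of $\Ga$ on $[0,T)$. Indeed, $\ka_1\geq (1+\ip{\nu}{e})/\ip{x-\cos\theta e}{e}\geq 0$ and $\ka_n\leq C(T)$, while $F\geq c>0$. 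Since $F$ vanishes on $\del\Ga$ and is monotone and homogeneous, the curvature vector stays a definite distance from $\del\Ga$. The operator $\cL$ is therefore uniformly parabolic: $F^{ij}$ is bounded above and below, and the coefficient $\ip{x+\cos\theta\nu}{e}/F^{2}$ is bounded above and, since $\ip{x+\cos\theta\nu}{e}\geq \ip{x}{e}-\cos\theta\geq\de_0$, also below.

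Next we transfer these bounds to the scalar problem \eqref{eq:scalar PDE}. The hypersurfaces stay strictly convex and lie in a hemisphere with respect to a fixed $e_0$; the height barrier keeps them away from the north-pole configuration. This gives uniform starshapedness with respect to $X_{e_0}$, so after the conformal change of \cite{WangWeng:/2020} the radial function $u$ has uniform $C^0$ and $C^1$ bounds. The curvature bounds then give $C^2$ bounds on $u$. The equation $\del_t u=G(D^2u,Du,u,\cdot)$ is uniformly parabolic and concave in $D^2u$, because $F$ is concave and the speed is $-\ip{x+\cos\theta\nu}{e}/F$ up to lower-order terms, i.e.\ of the form $-1/F$ times a positive factor. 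By Assumption~\ref{assum:F}(v), inverse concavity makes $-1/F$ concave, so Krylov--Safonov-type estimates apply. We will invoke the Hölder estimates for fully nonlinear concave parabolic equations with oblique boundary condition (Lieberman \cite{Lieberman:/1998}, Ch.~14, or Ural'tseva \cite{Uraltseva:06/1994}). These give $C^{2+\al,1+\al/2}$ bounds up to time $T$, and Schauder theory bootstraps them to all higher orders. The boundary condition $\n_{-e_0}u=\cos\theta\rt{1+\abs{Du}^2}$ is uniformly oblique because of the $C^1$ bound.

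With uniform smooth bounds on $[0,T)$, $\Si_t$ converges smoothly to a limit $\Si_T$ as $t\to T$. The limit is again a strictly $\theta$-horocap-convex capillary hypersurface: $\cR>0$ holds up to and including $T^\star$ by \autoref{lemma: R>0 pres}. Restarting the flow via the short-time existence theory from $\Si_T$ extends the solution beyond $T$, contradicting maximality.

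The main obstacle is the $C^{2,\al}$ step, i.e.\ checking that the scalar operator $G$ really is concave, or can be rewritten in a concave form, in $D^2u$ after the conformal transformation. The positive factor $\ip{x+\cos\theta\nu}{e}$ depends only on lower-order quantities, and the inverse-concavity assumption is exactly what is needed here. If a direct concavity check is awkward, we would instead take the $C^{2,\al}$ estimate from the corresponding step in \cite{WengXia:10/2022,ScheuerWangXia:02/2022}, whose structure is identical.
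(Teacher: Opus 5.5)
\textbf{Gap: uniform starshapedness with respect to a fixed $X_{e_0}$.} Your regularity chain depends on writing all $\Si_t$, for $t$ up to $T$, as graphs of a single function $u$ solving \eqref{eq:scalar PDE}. That requires uniform starshapedness with respect to one fixed field $X_{e_0}$. You assert this (``lie in a hemisphere with respect to a fixed $e_0$; the height barrier keeps them away from the north-pole configuration''), but you do not derive it, and the reasons you give do not imply it. This is exactly the step the paper's proof is about.

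For a convex capillary hypersurface, a quantitative bound on $\ip{X_{e_0}}{\nu}$ needs $e_0$ to lie in $\wh{\del\Si_t}\subset\bbS^n$ at a definite distance from $\del\Si_t$, uniformly in $t$. The $e_0$ from short-time existence is adapted only to $\Si_0$. The boundary $\del\Si_t$ moves on $\bbS^n$ and may approach or pass over it. The horocap vector $e$ does not help either. The bound $\ip{x}{e}\geq\cos\theta+\de_0$ only places $\Si_t$ in a half-space, which is not starshapedness with respect to $X_e$. That $e\in\wh{\del\Si_t}$ is established only later, after a waiting time, in a lemma that presupposes long-time existence. So your uniform $C^0$/$C^1$ bounds on $u$, and everything built on them, are unsupported as written.

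\textbf{How to close it (the paper's route).} The estimates you already listed suffice.
\begin{enumerate}
\item On $\del\Si_t$, the height bound gives $1+\ip{\nu}{e}\geq\de>0$. Horocap-convexity then yields a uniform positive lower bound on $\ka_1$ along $\del\Si_t$.
\item Combine this with the $T$-dependent upper bound of \autoref{prop:time kappa bound} and the relation between the second fundamental forms of $\del\Si_t\subset\bbS^n$ and of $\Si_t$ (\cite[Prop.~2.5(2)]{WengXia:10/2022}). This gives $\del\Si_t$ two-sided curvature bounds, hence $\wh{\del\Si_t}$ has a uniform inradius.
\item Choose $t_1$ close to $T$ and let $e_0$ be the center of an inscribed ball of $\wh{\del\Si_{t_1}}$.
\item The flow moves with bounded speed (using the lower bound on $F$ from \autoref{prop:est-F}). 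So $e_0$ stays a definite distance inside $\wh{\del\Si_t}$ for all $t\in[t_1,T)$.
\item This gives uniform starshapedness with respect to $X_{e_0}$ on $[t_1,T)$. Only then does your scalar-PDE and regularity step apply.
\end{enumerate}

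\textbf{Smaller points.} The step you single out as the main obstacle, the $C^{2,\al}$ estimate, is the standard part. Also, concavity of $-1/F$ follows from concavity of $F$ (assumption (iv)), because $s\mapsto -1/s$ is increasing and concave. Inverse concavity (v) plays no role there; it is used for preserving $\cR>0$. Finally, you do not need strict horocap-convexity of the limit $\Si_T$, and \autoref{lemma: R>0 pres} only covers $t<T$ anyway. For the restart, smoothness, capillarity, $F>0$ and the inherited starshapedness suffice.
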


\pf{
Suppose that $T<\8$.
From the height estimates we obtain
\eq{\ip{\nu}{e} + 1 \geq \de>0}
along the boundary and
for some constant $\de$. Employing the curvature estimates and the $\theta$-horocap-convexity, we see that the hypersurfaces $\del \Si_t\sub \bbS^n$ satisfy uniform upper and lower curvature bounds, see \cite[Prop.~2.5(2)]{WengXia:10/2022}. This translates to a uniform spherical inradius bound of $\del\Si_t\sub \bbS^n$.  Hence, provided $t$ is sufficiently close to $T$, there exists a center $e_0 \in \widehat{\del\Si_t}$ with a uniformly controlled distance to $\del\Si_t$, because the whole flow moves with bounded speed. Hence, all $\Si_t$ are uniformly starshaped with respect to the conformal Killing field $X_{e_0}$ and then, employing the scalar PDE \eqref{eq:scalar PDE}, we can extract a unique graphical limit, which can serve as a new initial hypersurface. Thus the flow can be extended beyond $T$. 
}

\begin{lemma}
    Let the initial hypersurface $\Si_0$ be strictly horocap-convex, then the flow becomes starshaped
around the north pole after some waiting time and then remains a starshaped flow.
\end{lemma}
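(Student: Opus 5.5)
Starshapedness around the north pole $e=e_{n+1}$ for capillary hypersurfaces in the ball means, in the sense used for \eqref{eq:scalar PDE}, that $\ip{X_{e}}{\nu}>0$ on $\Si_t$ (equivalently, $\Si_t$ is radially graphical after the conformal transformation to $\bbR^{n+1}_+$ centred at $e$). The plan is to show that the function $\ip{X_e}{\nu}$, or more conveniently a quantity built from it, becomes positive in finite time and that positivity is preserved thereafter.

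\textbf{Step 1: evolution of the support function.} I will compute $\cL\ip{X_e}{\nu}$ using the evolution of $\nu$ stated earlier together with the identity $D_Y X_e=\ip{x}{e}Y+\ip{Y}{e}x-\ip{x}{Y}e$. The expected structure is
\[
\cL\ip{X_e}{\nu}=\frac{\ip{x+\cos\theta\nu}{e}}{F^2}F^{ij}h_{ik}h^k_j\,\ip{X_e}{\nu}-\frac{\ip{x+\cos\theta\nu}{e}}{F}\cdot(\text{positive term})+\text{gradient terms},
\]
in which the zero-order reaction term has a good sign whenever $\ip{X_e}{\nu}\le 0$, because $\ip{x+\cos\theta\nu}{e}\geq \ip{x-\cos\theta e}{e}\ge\de_0>0$ by \autoref{height est}. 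On the boundary, the capillary condition gives $\ip{X_e}{\nu}=-\sin\theta\ip{e}{\bar\nu}$, and I will compute $\n_\mu\ip{X_e}{\nu}$ from the boundary relations of \cite[Prop.~2.5]{WengXia:10/2022} to exclude boundary minima by the Hopf lemma.

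\textbf{Step 2: waiting time.} To get into the starshaped regime, I will not rely on the maximum principle alone. Instead I will use the strict horocap-convexity, which is preserved by \autoref{lemma: R>0 pres}, together with the uniform bounds from \autoref{prop:est-F} and \autoref{height est}. The speed in \eqref{eq:flow} is a positive-coefficient combination that pushes $\Si_t$ toward the stationary caps, so I will compare with the family of capillary caps $C_{\theta,R}(e)$ from \autoref{height est}. The argument has three parts. First, show that the enclosed volume functional $W_0$ (or $W_k$ via \eqref{general variation}) is monotone and bounded, so that $\int_0^\infty\int_{\Si_t}\bigl(\ip{x+\cos\theta\nu}{e}-F\ip{X_e}{\nu}\bigr)(\cdots)\,dt<\infty$. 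Second, deduce that there is a sequence of times along which $\Si_t$ is close to a stationary cap around $e$, on which $\ip{X_e}{\nu}>0$ uniformly. Third, use the preservation from Step 1 afterwards.

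\textbf{Step 3: preservation.} Once $\ip{X_e}{\nu}>0$ at some $t_0$, I apply the maximum principle to $\min\ip{X_e}{\nu}$ using Step 1. Interior first zeros are excluded by the sign of the reaction term. Boundary zeros are excluded by Hopf, using that at such a point $\ip{e}{\bar\nu}=0$ and the oblique derivative has the right sign.

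\textbf{Main obstacle.} The hardest part is Step 2: producing the waiting time without a priori curvature bounds uniform in $t$, since the bound in \autoref{prop:time kappa bound} depends on $T$. My first attempt will avoid integral arguments. Instead I will directly show that a quantity such as $\ip{X_e}{\nu}/\ip{x+\cos\theta\nu}{e}$ satisfies an ODE-type inequality $\cL\phi\ge c(1-\phi)$ at its minimum. This uses $\ip{x+\cos\theta\nu}{e}\ge\de_0$ and the lower bound on $F$, and would force $\phi>0$ after an explicit time.
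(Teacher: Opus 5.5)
There is a genuine gap: as written, your proposal establishes neither the waiting time nor the permanence.

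First, the sign in Steps 1 and 3 is wrong. With the structure you predict, at a first interior zero of $u=\ip{X_e}{\nu}$ you get $\cL u=-\fr{\ip{x+\cos\theta\nu}{e}}{F}(\cdots)<0$. This is compatible with $\del_t u\leq 0$, $\n u=0$ and $\n^2u\geq 0$, so nothing is excluded. At a negative minimum it even pushes $u$ further down.

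If you actually carry out the computation, using the Hessian formula for $\ip{X_e}{\nu}$ from the derivation of \eqref{eq:ev h}, the zero-order part contains $\fr{\ip{x+\cos\theta\nu}{e}\ip{\nu}{e}}{F^{2}}F^{ij}g_{ij}$. At a zero of $u$ one has $\tfr12(\abs{x}^2+1)\ip{\nu}{e}=\ip{x}{e}\ip{x}{\nu}\leq 0$. So this term is non-positive and of order $F^{-2}$, while the remaining terms are of order $F^{-1}$, and no sign can be read off. The same term appears in $\cL\phi$ at a zero of $\phi=u/\ip{x+\cos\theta\nu}{e}$, so your fallback inequality $\cL\phi\geq c(1-\phi)$ is unsupported.

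Second, Step 2 cannot by itself produce the waiting time. $W_0$ (and $W_k$, $W_{k-1}$ for $F=H_k/H_{k-1}$) is constant along any flow of $\theta$-capillary caps of fixed radius, whether or not they are centred on the $e$-axis. Take a sufficiently small cap $C_{\theta,R}(e_0)$ with $\cos\theta<\ip{e_0}{e}<1$.
\begin{itemize}
\item It is strictly $\theta$-horocap-convex, since by the computation in \autoref{horocap}, $\cR=\fr1R(\ip{\hat e_0}{e}-\cos\theta-R)g$.
\item It has $e\notin\wh{\del C_{\theta,R}(e_0)}$, so $\ip{X_e}{\nu}=-\sin\theta\ip{e}{\bar\nu}<0$ at the boundary point nearest to $e$.
\end{itemize}
For such data your integral argument gives no information at all. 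In general it could at best give closeness to \emph{some} cap, not to one around $e$. Even that would need time-uniform compactness, which is not available here: the paper's time-independent curvature bound is proved after this lemma and uses it.

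The missing idea is to use an inner spherical cap as a barrier whose evolution you can determine exactly.
\begin{itemize}
\item A $\theta$-capillary cap realises equality in Weng--Xia's inequality $W_n\geq f_n(W_{n-1})$. The flow with $F=H_n/H_{n-1}$ preserves $W_n$ and increases $W_{n-1}$, so it keeps a cap a cap of the same radius.
\item Since $H_n/H_{n-1}=F$ on caps, uniqueness shows this is also the solution $N_t$ of \eqref{eq:flow} starting from that cap.
\item The barrier argument of \autoref{height est} makes the maximal spherical distance from $\del N_t$ to the north pole decrease. The only stationary caps are those centred on the $e_{n+1}$-axis, so $N_t$ converges to such a cap.
\item Choosing $N_0$ inside $\wh\Si_0$, the avoidance principle keeps $\wh N_t\subset\wh\Si_t$. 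Hence $e$ enters the interior of $\wh{\del\Si_t}$ after finite time and stays there with a uniform margin, which is what the subsequent estimate uses through $\ip{\nu}{e}\leq-2\ep$.
\end{itemize}
This gives both the waiting time and the permanence without any maximum principle for $\ip{X_e}{\nu}$.
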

\begin{proof}
 We argue with certain properties of the related flow
 \begin{align}\label{pf:starshaped 1}
     \dot{y} = \br{\fr{\ip{y+\cos\theta \nu}{e}}{\frac{H_n}{H_{n-1}}} - \ip{X_{e}}{\nu}}\nu + \cT,
 \end{align}
 which preserves $W_n$ and increases $W_{n-1}$. Now we start the flow $(N_{t})$ solving \eqref{pf:starshaped 1} from any $\theta$-capillary spherical cap lying above the hyperplane $\{x^{n+1}=\cos\theta\}$. Then, $N_0$ achieves equality in the inequality $W_{n}\geq f_n(W_{n-1})$, as established by Weng/Xia \cite[Theorem 1.3]{WengXia:10/2022}. Therefore, each subsequent $N_{t}$ must also be a $\theta$-capillary spherical cap with the same radius; otherwise, $W_{n-1}(N_t)$ would strictly increase while $W_n(N_t)$ remains constant. 
Hence, the flow $(N_{t})$ satisfies
 \begin{align*}
     \dot{y} = &\br{\fr{\ip{y+\cos\theta \nu}{e}}{\frac{H_n}{H_{n-1}}} - \ip{X_{e}}{\nu}}\nu + \cT\\
     =&\br{\fr{\ip{y+\cos\theta \nu}{e}}{F} - \ip{X_{e}}{\nu}}\nu + \cT,
 \end{align*}
because $\frac{H_n}{H_{n-1}} = F$ for spherical caps. Therefore, $(N_{t})$ also satisfies \eqref{eq:flow} and by uniqueness we have $M_{t} = N_{t}$. Consequently,  for \eqref{eq:flow} with an initial spherical cap, the flow hypersurfaces are spherical caps of the same radius. Due to the spherical barrier estimate, we know that the maximum distance from $\partial N_t$ to the north pole is decreasing. It follows that $N_t$ must converge to a spherical cap around the $e_{n+1}$-axis, since  the only stationary points of the flow \eqref{eq:flow} are such spherical caps. 

 Now let $\Si_{0}$ be a strictly horo-convex initial hypersurface and $(\Si_{t})_{t>0}$ be the flow \eqref{eq:flow} emerging from it. Consider a $\theta$-spherical cap lying inside $\widehat\Si_0$, it moves to a spherical cap  around the $e_{n+1}$-axis by the previous argument. From the avoidance principle, after some time the north pole must be in the region bounded by $\Si_{t}$, which ensures the starshapedness.
\end{proof}

In order to obtain time-independent curvature bounds, we modify the evolution of $h^{n}_{n}$ with a well-known test function. 
\begin{prop}
Along the flow \eqref{eq:flow} starting from a strictly $\theta$-horocap-convex hypersurface, the second fundamental form is bounded by a constant depending on initial data.
\end{prop}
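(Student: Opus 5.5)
Once $\Si_t$ has become starshaped around the north pole $e=e_{n+1}$ (previous lemma, after a waiting time $t_0$), I would use the Guan/Li-type test function
\eq{\varphi = \log h^{n}_{n} - \log\br{u - a},\q u:=-\ip{X_{e}}{\nu},}
for a small constant $a>0$. On $[0,t_0]$ the curvature is already bounded by \autoref{prop:time kappa bound}, so only $t\geq t_0$ matters. The starshapedness is quantitative: since the flow moves within a compact family and the north pole lies at a uniformly positive distance inside $\widehat{\Si_t}$ (avoidance with the inner spherical cap, which converges to a cap around the $e_{n+1}$-axis), I expect $u\geq 2a>0$ for $t\geq t_0$ and a suitable $a$. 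The point of $u$ is that it evolves under $\cL$ with the good term $\fr{\ip{x+\cos\theta\nu}{e}}{F^{2}}F^{kl}h^{r}_{k}h_{rl}\,u$. Indeed, $\ip{X_e}{\nu}$ is the normal part of a conformal Killing field, so its Hessian produces $F^{ij}h^k_ih_{kj}$ up to lower-order terms, just as in the computation of $\ip{X_e}{\nu}_{;ij}$ used for \eqref{eq:ev h}.

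At an interior maximum of $\varphi$, I would combine the estimate on $\cL h^n_n$ from the proof of \autoref{prop:time kappa bound}, namely
\eq{\cL h^{n}_{n}\leq -\tfr{\al}{F}(\ka_{n}^{2}-c\ka_{n}-c)+\ka_{n}+1,}
with the evolution of $u$. Here $\al>0$ is the uniform height gap from \autoref{height est}. The gradient terms are handled in the standard way: at the maximum, $\n\log h^n_n=\n\log(u-a)$, and concavity of $F$ absorbs the $F^{kl,pq}$ term. The remaining terms include $-\fr{\ip{x+\cos\theta\nu}{e}}{F^{2}}F^{kl}h^r_kh_{rl}\,\fr{u}{u-a}$ from $u$ and $+\fr{\ip{x+\cos\theta\nu}{e}}{F^2}F^{kl}h^r_kh_{rl}$ from $h^n_n$. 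Their combination is negative with a factor $-\fr{a}{u-a}$.

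Next I use \autoref{prop:est-F}, which gives $F\geq c_0>0$, together with the bound $F\leq \ka_n$. The dominant term $-\fr{\al}{F}\ka_n^2\leq -\al\ka_n$ then competes with the linear terms, and I expect to need the extra $-\fr{a}{u-a}F^{kl}h^r_kh_{rl}\gtrsim$ term to close the estimate. \textbf{This is the main obstacle.} One has to check that $\fr{\ip{x+\cos\theta\nu}{e}}{F^2}F^{kl}h^r_kh_{rl}\geq c\,\fr{\ka_n^{2}F^{nn}}{F^{2}}$ dominates the linear terms $\ka_n+1$. This is delicate because $F^{nn}$ may degenerate when $\ka_n\to\infty$. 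If it fails, I would fall back on the inverse-concavity of $F$, i.e. $\sum F^{kk}\ka_k^2\geq F^2$, and accept a slightly weaker intermediate bound $\ka_n\leq C(1+F)$. I would then bound $F$ from above separately by the maximum principle applied to $\log F-\log(u-a)$, using \eqref{eq:ev F}. In that equation the term $\fr{\ip{x}{e}}{F}(F^2-F^{kl}h^r_kh_{rl})$ is nonpositive by inverse concavity, and the $u$-term supplies the decay.

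Finally, boundary maxima must be excluded. For $h^n_n$ I reuse the boundary computations of \autoref{prop:time kappa bound}: either $\n_\mu h_{nn}=(\cot\theta\ka_n+\tfr1{\sin\theta})(\ka_\mu-\ka_n)\leq0$, or $h_{\mu\mu;\mu}\leq 0$. For $u$, I need $\n_\mu u\geq 0$ or a compatible sign along $\del\Si_t$. Using $\bar N=x$, the capillary relations and $\n_\mu \ip{X_e}{\nu}=h(\mu,X_e^\top)$, I expect $\n_\mu u$ to be a multiple of $\cot\theta\,\ka_\mu\,u$ plus harmless terms, as in \cite{WangWeng:/2020}. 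If the sign is wrong, I would replace $u$ by $u\,e^{\be\ip{x}{e}}$ or a similar correction to make the Hopf-lemma argument go through.
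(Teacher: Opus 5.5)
There is a genuine gap: the test function $\log h^n_n-\log(u-a)$ cannot give the strictly negative term you need, and neither can your fallback. There is first a sign slip. In the paper's orientation ($\nu\approx -e$ near the limiting caps), starshapedness means $\ip{X_e}{\nu}>0$: on $\del\Si$ one has $\ip{X_e}{\nu}=-\sin\theta\ip{\bar\nu}{e}$, and stationary caps satisfy $\ip{X_e}{\nu}=\ip{x+\cos\theta\nu}{e}/F>0$. So your $u=-\ip{X_e}{\nu}$ is negative, and you must work with $\tilde u=\ip{X_e}{\nu}$.

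The real problem is the zero-order term coming from the conformal factor of $X_e$, which you dropped. Since $\ip{D_\nu X_e}{\nu}=\ip{x}{e}$, one finds
\[\cL\tilde u=-\ip{x}{e}\,\tilde u+\fr{\ip{x+\cos\theta\nu}{e}}{F^{2}}F^{kl}h^{r}_{k}h_{rl}\,\tilde u+O(F^{-1}).\]
Hence $-\cL\log(\tilde u-a)$ contributes $+\fr{\tilde u}{\tilde u-a}\ip{x}{e}$, which adds to the bad $+\ip{x}{e}$ already present in $\cL\log h^n_n$. Your good terms, $-\al\ka_n/F$ and $-\fr{a}{\tilde u-a}\fr{\ip{x+\cos\theta\nu}{e}}{F^2}F^{kl}h^r_kh_{rl}$, are only of order one when $\ka_n/F$ is bounded. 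At an umbilic point ($F=\ka_n$, $F^{kl}h^r_kh_{rl}=F^2$), the order-one terms of $\cL\varphi$ that you control add up to $-\fr{\tilde u}{\tilde u-a}\cos\theta\ip{\nu}{e}\geq 0$. So nothing forces a contradiction at a large maximum.

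The fallback fails for the same reason. With $Q=F^{kl}h^r_kh_{rl}/F^2\geq 1$, the order-one part of $\cL\br{\log F-\log(\tilde u-a)}$ is
\[(1-Q)\ip{x}{e}\br{1+\fr{\tilde u}{\tilde u-a}}-\fr{\tilde u}{\tilde u-a}\,Q\cos\theta\ip{\nu}{e}.\]
This is positive at $Q=1$ when $\theta<\pi/2$, since $\ip{\nu}{e}<0$ after the waiting time. So the $u$-term does not supply decay.

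What is missing is a negative zero-order term independent of $F$. The paper gets it from $z=\log h^n_n-\La\ip{x}{e}$. The evolution of $\ip{x}{e}$ contains $-\ip{X_e}{e}$, so $-\La\cL\ip{x}{e}$ contributes $\La\ip{X_e}{e}\leq -\tfr{\La}{2}\abs{x-e}^2\leq -\tfr{\La\ep}{2}$ once the north pole lies uniformly inside. For $\La$ large this kills the constant $+1$ left over from $\cL h^n_n/h^n_n$.

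The price is a set of $O(\La/F)$ terms and the gradient term $\fr{\La^2\ip{x+\cos\theta\nu}{e}}{F^2}F^{kl}\ip{x_k}{e}\ip{x_l}{e}$. Horocap-convexity gives $(1+\ip{\nu}{e})F^{ij}g_{ij}\leq \ip{x-\cos\theta e}{e}F$, hence $F^{kl}\ip{x_k}{e}\ip{x_l}{e}\leq 2F$, so this term is $O(\La^2/F)$. All of these are absorbed by $-\al\ka_n/F$ once $\ka_n$ is large, so no upper bound on $F$ is ever needed. At the boundary, $\ip{\mu}{e}>0$ gives $\n_\mu z<0$.
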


\pf{
Since $e$  lies  in the interior of the spherically convex set bounded by $\del \Si_{t}\sub \bbS^{n}$ after a waiting time $t_{0}$, we have $\ip{\nu}{e}\leq -2\ep <0$ and $\abs{x-e}^{2}\geq \ep$ for some $\ep>0$. We define
\eq{z= \log h^{n}_{n} - \La\ip{x}{e},}
where $\La$ is a positive constant to be determined later.

Similarly to the proof of \autoref{prop:time kappa bound},  in suitable coordinates and at large maximal interior points, we obtain
\eq{\cL z &\leq -\fr{\al}{F}\ka_{n} + 1 - \fr{2\La\ip{x+\cos\theta\nu}{e}}{F}\ip{\nu}{e}-\La\fr{\cos\theta}{F}\abs{e^{\top}}^{2}+\La\ip{X_{e}}{e}+ \fr{\La^{2}\ip{x+\cos\theta\nu}{e}}{F^{2}}F^{kl}\ip{x_{k}}{e}\ip{x_{l}}{e}\\
		&\leq -\fr{1}{F}(\al\ka_{n} - c\La) + 1 -\fr{\La}{2}\abs{x-e}^{2}+ \fr{\La^{2}\ip{x+\cos\theta\nu}{e}}{F^{2}}F^{kl}\ip{x_{k}}{e}\ip{x_{l}}{e}. }
From the $\theta$-horocap-convexity, we have
\eq{(1+\ip{\nu}{e})F^{ij}g_{ij}\leq \ip{x-\cos\theta e}{e}F}
and hence
\eq{F^{kl}\ip{x_{;k}}{e}\ip{x_{;l}}{e}\leq F^{ij}g_{ij}(1-\ip{\nu}{e}^2)\leq (1-\ip{\nu}{e})\ip{x-\cos\theta e}{e} F\leq  2F. }
Hence, the term involving $F^{2}$ can be absorbed into the term involving $\al$, after choosing $\La$ large enough to eliminate $+1$.
This excludes large interior maxima.

At large boundary values of $\ka_{n}$ we have, due to $\ip{\mu}{e}>0$,
\eq{\n_{\mu}z = \br{\cot\theta\ka_{n}+\tfr{1}{\sin\theta}}(\ka_{\mu}-\ka_{n})\ka_{n}^{-1} - \La\ip{\mu}{e}<0,} 
in case $x_{n}\neq \pm\mu$ and otherwise
\eq{\n_{\mu}z = -\fr{F^{\al\al}{}h_{\al\al;\mu}}{\ka_{n}F^{\mu\mu}} - \La\ip{\mu}{e}=-\fr{F^{\al\al}\br{\cot\theta\ka_{\al}+\tfr{1}{\sin\theta}}(\ka_{\mu}-\ka_{\al})}{{\ka_{n}F^{\mu\mu}}} -\La\ip{\mu}{e} < 0.}
Hence $z$ will not attain large boundary maxima. 
}
	
\begin{cor}
	Let the initial hypersurface $\Sigma_{0}\sub \bbB^{n+1}$ be strictly $\theta$-horocap-convex. Then the flow \eqref{eq:flow} starting from $\Sigma_{0}$ exists for all times and converges smoothly to a spherical cap, which is rotationally symmetric around the $e_{n+1}$-axis.
\end{cor}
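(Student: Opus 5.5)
Long-time existence is already settled by the preceding corollary. The corollary reduces to convergence, and I would argue it in three steps: uniform higher regularity, a monotone quantity that forces subsequential limits to be stationary, and identification of the limit.

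\emph{Uniform regularity.} Fix the waiting time $t_0$ after which $\Si_t$ is starshaped around the north pole, so $\ip{\nu}{e}\leq -2\ep$. Afterwards the flow is the scalar problem \eqref{eq:scalar PDE} with respect to $e_0=e$, and the following hold uniformly in time:
\begin{enumerate}[(i)]
\item a uniform $C^0$ bound, from \autoref{height est} and $\abs{x}\leq 1$;
\item a uniform $C^1$ bound, from the starshapedness;
\item a uniform $C^2$ bound, from the time-independent curvature estimate together with the lower bound on $F$ in \autoref{prop:est-F}.
\end{enumerate}
The lower bound on $F$ and the upper bound on $h$ keep the principal curvatures in a compact subset of $\Ga$, so the equation is uniformly parabolic. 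Since $F$ is concave, Krylov–Safonov type estimates with oblique boundary conditions (Lieberman) give $C^{2,\al}$ bounds, and Schauder theory bootstraps these to all higher orders. The family $\Si_t$ is therefore precompact in $C^\8$.

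\emph{Monotone quantity and stationary limits.} By \eqref{general variation}, with $f=\ip{x+\cos\theta\nu}{e}/F-\ip{X_e}{\nu}$, the volume functional $W_0$ should be monotone along the flow. I would use the Minkowski-type identities for capillary hypersurfaces, namely $\int_\Si H_k\ip{X_e}{\nu}=\int_\Si H_{k-1}\ip{x+\cos\theta\nu}{e}$ with $k=1$. Combined with $F\leq H_1$ (concavity and normalisation), this gives
\[
\int_\Si\Big(\fr{\ip{x+\cos\theta\nu}{e}}{F}-\ip{X_e}{\nu}\Big)=\int_\Si\fr{\ip{x+\cos\theta\nu}{e}}{F H_1}\,(H_1-F)\geq 0 .
\]
The factor $\ip{x+\cos\theta\nu}{e}$ is positive here; this follows from horocap-convexity and the height estimate, since $1+\ip{\nu}{e}\geq0$ and $\ip{x}{e}>\cos\theta$. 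So $W_0(\wh\Si_t)$ is nondecreasing and bounded by $\abs{\bbB^{n+1}}$, hence
\[
\int_0^\8\!\!\int_{\Si_t}\fr{\ip{x+\cos\theta\nu}{e}}{F H_1}(H_1-F)\,dt<\8 .
\]
With the uniform $C^k$ bounds, the time derivative of the integrand is bounded, so the integrand tends to $0$. Every subsequential limit therefore has $F=H_1$ at each point. Strict concavity of $F$ away from the radial direction would make it umbilic; if that property is unavailable for general $F$, I would instead use $W_1$ or $H_1\geq$ the quotient with $H_k$. An umbilic $\theta$-capillary hypersurface is a spherical cap (or a flat ball). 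Horocap-convexity is preserved in the limit in weak form, and the limit is stationary for \eqref{eq:flow}, so by \cite[Equ.~(2.6)]{WengXia:10/2022} the stationary caps are exactly the caps $C_{\theta,R}(e)$ around the $e_{n+1}$-axis.

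\emph{Uniqueness of the limit.} This is the main obstacle: different subsequences might converge to caps of different radii. The caps around the $e_{n+1}$-axis form a one-parameter nested family, ordered by $R$. I would use the avoidance argument from \autoref{height est} against these stationary caps. The radius $R_-(t)$ of the largest such cap contained below $\Si_t$ is nondecreasing, and the radius $R_+(t)$ of the smallest cap containing $\Si_t$ is nonincreasing. Hence both have limits, and any subsequential limit cap is squeezed between them. If $R_-<R_+$ in the limit, a subsequential limit cap would have to touch both bounding caps, which contradicts the strong maximum principle for stationary solutions. So $R_-=R_+$, all subsequential limits coincide, and by interpolation with the uniform $C^k$ bounds the convergence is smooth.
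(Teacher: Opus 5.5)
Your regularity step matches the paper, but the monotonicity step rests on a false identity. The capillary Minkowski formula with $k=1$ is $\int_\Si H_1\ip{X_e}{\nu}=\int_\Si\ip{x+\cos\theta\nu}{e}$. Your display is equivalent to $\int_\Si\ip{X_e}{\nu}=\int_\Si\ip{x+\cos\theta\nu}{e}/H_1$, which amounts to dividing the Minkowski formula pointwise by $H_1$; that is legitimate only when $H_1$ is constant. What holds in general is the inequality $\int_\Si\ip{x+\cos\theta\nu}{e}/H_1\geq\int_\Si\ip{X_e}{\nu}$. This is the capillary Heintze--Karcher inequality \cite[Thm.~1.1]{WangXia:11/2024}, with equality exactly on spherical caps, and it does not follow from Minkowski identities. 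The paper combines it with $F\leq H_1$ to get $\partial_tW_0\geq 0$.

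The same error breaks your identification of the limit. From your identity you only extract $F=H_1$ on the limit, and this says nothing for a general concave $F$, since no strict concavity is assumed. For $F=H_1$, which satisfies \autoref{assum:F}, your integrand vanishes identically. Your identity would then even give $\partial_tW_0\equiv0$, contradicting the strict Heintze--Karcher inequality off caps. Your fallback via $W_1$ is not worked out. In the paper, $\partial_tW_0=0$ along a limit flow forces equality in Heintze--Karcher, and its rigidity gives caps for every admissible $F$.

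Second, your claim that the limit is stationary is unsupported. Vanishing $W_0$-dissipation (equivalently, equality in Heintze--Karcher) yields a spherical cap, possibly not centred on the $e_{n+1}$-axis, and such caps move under \eqref{eq:flow}. Your squeeze argument does not repair this for static limits. Let $\phi$ be the parameter with $\phi=R$ on $C_{\theta,R}(e)$. Any limit hypersurface automatically attains $\min\phi=R_-$ and $\max\phi=R_+$, so touching both bounding caps is no contradiction.

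The squeeze does work if you pass, as the paper does, to limit flows $x(t+k,\cdot)$. By the monotonicity of $R_\pm(t)$, $\min\phi$ and $\max\phi$ are constant along such an eternal solution. So it touches the stationary cap $C_{\theta,R_+}(e)$ from one side at every time. The strong maximum principle, together with the Hopf lemma at the boundary (as in \autoref{height est}), then forces it to coincide with that cap, hence $R_-=R_+$. Done this way, the barrier argument identifies the limit on its own, without any monotone quantity. The paper instead obtains the caps from Heintze--Karcher and the centring from \cite[Prop.~3.8]{ScheuerWangXia:02/2022}.
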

\begin{proof}
	In order to apply parabolic regularity, we come back to \eqref{eq:scalar PDE}.  Our geometric bounds yield uniform estimates up to $C^{2}$. By (ii) of \autoref{assum:F}, they also implies the uniform parabolicity of the curvature function $F$ along the evolution, since the principal curvatures remain within a compact subset of the cone $\Ga$. Standard parabolic regularity theory gives parabolic H\"older estimates for the second derivatives, and then applying Schauder theory to the linearisation of the operator gives estimates to arbitrary order.    Due to these uniform estimates, a diagonal argument together with the Arzela-Ascoli theorem implies that the sequence of flows 
	\eq{x_k(t,\cdot)=x(t+k,\cdot)}
	subsequentially converges to a limit flow $x_{\infty}$. Next, we show that any limit flow $x_{\infty}$ is a spherical cap $C_{\theta,r_{\infty}}(e)$ with fixed radius $r_{\infty}$, rotationally symmetric around the $e_{n+1}$-axis. By the concavity of $F$ we have $F\leq H_1$, and this implies that $W_{0}(\Sigma_t)$ is nondecreasing along the flow \eqref{eq:flow}. Indeed, 
	\begin{align*}
		\partial_tW_0(\wh{\Sigma}_t)&=\frac{n}{n+1}\int_{\Si_{t}}\br{\langle x+\cos\theta\nu,e\rangle\frac{1}{F}-\langle X_e,\nu\rangle}\geq0,
	\end{align*}
	where in the second equality we  have used the Heintze-Karcher type inequality proved in \cite[Thm.~1.1]{WangXia:11/2024}. Consequently, $W_0(\wh{\Sigma}_t)$ converges to a constant $c_0$ and along the limit flow, it attains this constant for all times. Hence along the limit flow we have $\del_t W_0 = 0$, the Heintze-Karcher inequality holds with equality and thus the limit flow is a flow of spherical caps. 

    Finally, by the same argument as in \cite[Prop 3.8]{ScheuerWangXia:02/2022} or \cite[Prop 3.15]{WengXia:10/2022}, we conclude that the $\theta$-capillary boundary spherical cap must be rotationally symmetric around the $e_{n+1}$-axis.
\end{proof}

\section{The proof of \autoref{thm:QM}}\label{inequality}

\subsection{Strictly horocap-convex case.}
Firstly, we use the flow result  \autoref{thm:flow} to prove the quermassintegral inequalities \eqref{thm:QM-A} for the stricty $\theta$-horocap-convex capillary hypersurfaces, the following proof is straightforward and well-known.

Assume that $\Si_0$ is a strictly $\theta$-horocap-convex capillary hypersurface, let $\Si_t$ be the solution to the flow \eqref{eq:flow} with $F=H_{k}/H_{k-1}$ starting from $\Si_0$ for $k\geq 1$.
We use the general variation formula \eqref{general variation}  and compute

\eq{\label{mon-1}\del_{t}W_{k}(\wh\Si_{t}) = \fr{n+1-k}{n+1}\int_{\Si_{t}}(H_{k-1}\ip{x+\cos\theta \nu}{e} - H_{k}\ip{X_{e}}{\nu}) = 0,}
which is a Hsiung-Minkowski type formula proved in \cite[Prop.~2.8]{WengXia:10/2022}.

Furthermore,  we obtain
\eq{\label{mon-2}\del_{t}W_{k-1}(\wh\Si_{t}) = \fr{n+2-k}{n+1}\int_{\Si_{t}}\br{\fr{H^2_{k-1}}{H_{k}}\ip{x+\cos\theta\nu}{e}-H_{k-1}\ip{X_{e}}{\nu}}\geq 0,}
where we used the Newton-Maclaurin inequalities
\eq{H_{k-2}H_{k}\leq H_{k-1}^{2}}
and again a Hsiung-Minkowski identity in case $k\geq2$ and the Heintze-Karcher  inequality in case $k=1$.
Hence, the flow \eqref{eq:flow} starting from $\Si_{0}=\Si$ with $F = H_{k}/H_{k-1}$ preserves $W_{k}$ and increases $W_{k-1}$ for $k\geq1$. As the flow converges to a spherical cap, at $t=\8$ the equality in \eqref{thm:QM-A} is attained, and therefore the inequality is evident for $\Si=\Si_{0}$.

\subsection{General horocap-convex case.}
Next, we use the result on strictly $\theta$-horocap-convex hypersurfaces in conjunction with an approximation argument to prove the quermassintegral inequalities in the $\theta$-horocap-convex case. The task is to approximate a $\theta$-horocap-convex hypersurface
by strictly $\theta$-horocap-convex hypersurfaces. As the capillarity and the convexity condition have to be respected in combination, this is a nontrivial task. In the case of usual convexity, an argument using mean curvature flow was given in \cite{LambertScheuer:09/2017} for the free boundary case and later in \cite[Thm.~A.3]{HuWeiYangZhou:09/2023} for the capillary case. This approach does not work for us, due to the more complicated convexity condition. In the following, we present a new argument.

We consider the mean curvature flow
\begin{equation}\label{MCF}
	\begin{cases}
		\partial_t x = -H\nu + V,
		& \text{in } {\bbB}^n \times [0,T), \\[0.4em]
		\langle \bar N \circ x , \nu \rangle = -\cos\theta,
		& \text{on } \partial {\bbB}^n \times [0,T),
	\end{cases}
\end{equation}
and denote $\Sigma_t = x({\bbB}^n,t)$. Here $\nu$ is the outward unit normal of $\Sigma_t$ and $V$ is the tangential component of $\partial_t x$, chosen so that
$V|_{\partial\Sigma_t}$ is tangent to $\partial\Sigma_t$.
\begin{lemma}\label{R-mcf}
	Along the flow \eqref{MCF},  there hold
 \eq{\partial_t\cR_{ij}
 	=\Delta\cR_{ij}+\cR_{ij;k}V^{k}+\cR_j^kV_{k;i}+\cR_i^kV_{k;j}-2\langle x^k,e\rangle h_{ij;k}+\vert h\vert^2\cR_{ij}-2Hh^k_i\cR_{kj}+\vert h\vert^2 g_{ij}}
and for every fixed $i$,
\eq{\partial_t\cR_{i}^i
 	=\Delta\cR^i_i+\cR^i_{i;k}V^{k} -2\langle x^k,e\rangle h^i_{i;k}+\vert h\vert^2\cR^i_{i}+\vert h\vert^2.  }
\end{lemma}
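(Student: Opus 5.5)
We compute the evolution of $\cR_{ij}=\ip{x-\cos\theta e}{e}h_{ij}-(1+\ip{e}{\nu})g_{ij}$ term by term along \eqref{MCF}, using the standard mean curvature flow evolution equations with a tangential term. The plan is to derive the evolution of each of the three ingredients $\ip{x}{e}$, $\ip{\nu}{e}$ and $h_{ij}$, then combine them and check that the $\Delta$-cross terms and zero-order terms collapse to the stated form.

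\textbf{Step 1: evolution of $\ip{x}{e}$.} From $\partial_t x=-H\nu+V$ and $\Delta x=-H\nu$ (by the Gauss formula \eqref{eq:Gauss formula}), we get
\[
\partial_t\ip{x}{e}=\Delta\ip{x}{e}+\ip{x}{e}_{;k}V^k .
\]

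\textbf{Step 2: evolution of $\ip{\nu}{e}$.} Using \eqref{eq:nu} we have $\partial_t\nu=\n H+h^k_jV^jx_k$. The Codazzi identity gives $\Delta\nu=\n H-\abs{h}^2\nu$. Hence
\[
\partial_t\ip{\nu}{e}=\Delta\ip{\nu}{e}+\abs{h}^2\ip{\nu}{e}+\ip{\nu}{e}_{;k}V^k .
\]

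\textbf{Step 3: evolution of $g_{ij}$ and $h_{ij}$.} For the metric,
\[
\partial_t g_{ij}=-2Hh_{ij}+V_{i;j}+V_{j;i}.
\]
For the second fundamental form, Simons' identity gives
\[
\partial_t h_{ij}=\Delta h_{ij}-2Hh_{ik}h^k_j+\abs{h}^2h_{ij}+h_{ij;k}V^k+h^k_jV_{k;i}+h^k_iV_{k;j}.
\]
This is the $F=H$ analogue of \eqref{ev-h-eq1} combined with the Simons identity from the previous section.

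\textbf{Step 4: combine.} Apply the product rule
\[
\Delta(fh_{ij})=f\Delta h_{ij}+h_{ij}\Delta f+2\n f\cdot\n h_{ij}
\]
with $f=\ip{x-\cos\theta e}{e}$. Then:
\begin{itemize}
\item The cross term $2\ip{x_k}{e}h_{ij;k}$ appears with a minus sign in $\partial_t-\Delta$, which produces $-2\ip{x^k}{e}h_{ij;k}$.
\item The zero-order terms read $f(\abs{h}^2h_{ij}-2Hh_{ik}h^k_j)-\abs{h}^2\ip{\nu}{e}g_{ij}+2H(1+\ip{e}{\nu})h_{ij}$. Writing $fh_{ij}=\cR_{ij}+(1+\ip{e}{\nu})g_{ij}$, the term $-2Hfh_{ik}h^k_j$ becomes $-2Hh^k_i\cR_{kj}-2H(1+\ip{e}{\nu})h_{ij}$. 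The second piece cancels the metric contribution, and $f\abs{h}^2h_{ij}-\abs{h}^2\ip{\nu}{e}g_{ij}=\abs{h}^2\cR_{ij}+\abs{h}^2g_{ij}$.
\item The tangential terms assemble into the Lie derivative $\cR_{ij;k}V^k+\cR^k_jV_{k;i}+\cR^k_iV_{k;j}$, since every scalar and tensor transforms by the Lie derivative along $V$.
\end{itemize}

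\textbf{Step 5: the mixed tensor.} For $\cR^i_i$, use $\partial_t g^{ij}=2Hh^{ij}-V^{i;j}-V^{j;i}$. The $2Hh^k_i\cR_{ki}$ term then cancels $-2Hh^k_i\cR_{ki}$, and the $V$-derivative terms cancel, leaving only the transport term $\cR^i_{i;k}V^k$. Here we evaluate in coordinates diagonalising $h$ and therefore $\cR$; with a fixed $i$ this is what "for every fixed $i$" means.

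The main care point is the sign bookkeeping in Step 4, namely the cancellation of the $H(1+\ip{e}{\nu})h_{ij}$ terms and the exact form of the tangential Lie derivative terms. Everything else is routine.
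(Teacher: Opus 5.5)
Your proposal is correct and follows the same route as the paper. Both derive the evolutions of $\ip{x}{e}$, $\ip{\nu}{e}$, $g_{ij}$ and $h_{ij}$ (the last via Simons' identity), combine them using $\ip{x-\cos\theta e}{e}h_{ij}=\cR_{ij}+(1+\ip{e}{\nu})g_{ij}$ to absorb the zero-order terms, and pass to the mixed tensor with $\partial_t g^{ij}=2Hh^{ij}-V^{i;j}-V^{j;i}$. Your restriction in Step 5 to coordinates diagonalising $h$ (hence $\cR$) is genuinely needed, because the leftover $V$-terms form the commutator $\cR^i_lV^l_{;i}-V^i_{;k}\cR^k_i$, which vanishes only on diagonal entries in such coordinates; the paper leaves this implicit, and that is exactly how the formula is used later.
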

\begin{proof}
	First, using $\partial_t\langle x,e\rangle=-\langle H\nu,e\rangle+\langle V,e\rangle$ and $x_{;ij}=-h_{ij}\nu$, we obtain
	 \eq{\label{height-mcf}\partial_t\langle x,e\rangle=\Delta\langle x,e\rangle+\langle V,e\rangle.}
	Next,  by \eqref{eq:nu} and 
	\[\partial_t\nu=\nabla H+h^j_kV^kx_j,\] 
	we compute
	 \eq{\label{dual height-mcf}	\partial_t\langle\nu,e\rangle=&\Delta\langle\nu,e\rangle+\vert h\vert^2\langle\nu,e\rangle+h^{\ell}_kV^k\langle x_{\ell},e\rangle.}
	Moreover, combining the evolution equation
\eq{\partial_th_{ij}=H_{;ij}-Hh_{ik}h^k_j+ h_{ij;k}V^{k}+h^{k}_{j}V_{k;i} + h^{k}_{i}V_{k;j},
}
with Simon's identity
\eq{H_{;ij}=\Delta h_{ij}+\vert h\vert^2h_{ij}-Hh_{ik}h^k_j,}
we obtain 
\eq{\label{h-mcf}	\partial_th_{ij}=\Delta h_{ij}+\vert h\vert^2h_{ij}-2Hh^k_ih_{kj}+h_{ij;k}V^{k}+h_j^kV_{k;i}+h_i^kV_{k;j}.}
Now, combining \eqref{height-mcf},\eqref{dual height-mcf} with \eqref{h-mcf},
and using the definition of $\cR$ we get
\eq{
	\partial_t\cR_{ij}&=\partial_t\langle x,e\rangle h_{ij}+\langle x-\cos\theta e,e\rangle\partial_th_{ij}-\partial_t\langle\nu,e\rangle g_{ij}-(1+\langle\nu,e\rangle )\partial_tg_{ij}\\
	&=\Delta\cR_{ij}-2\langle x^k,e\rangle h_{ij;k}+\langle V,e\rangle h_{ij}-\vert h\vert^2\langle\nu,e\rangle g_{ij}-h^{\ell}_kV^k\langle x_{\ell},e\rangle g_{ij}\\
	&\hp{=}+\langle x-\cos\theta e,e\rangle\left(\vert h\vert^2h_{ij}-2Hh^k_ih_{kj}+h_{ij;k}V^{k} +h_j^kV_{k;i}+h_i^kV_{k;j}\right)\\
	&\hp{=}+(1+\langle\nu,e\rangle )(2Hh_{ij}-V_{j;i}-V_{i;j})\\
	&=\Delta\cR_{ij}+\cR_{ij;k}V^{k}+\cR_j^kV_{k;i}+\cR_i^kV_{k;j}-2\langle x_k,e\rangle h_{ij;k}+\vert h\vert^2\cR_{ij}-2Hh^k_i\cR_{kj}+\vert h\vert^2 g_{ij},
}
where we have used 
\eq{\label{eq:grad R}
	\cR_{ij;k}=\langle x_k,e\rangle h_{ij}+\langle x-\cos\theta e,e\rangle h_{ij;k}-h_k^{\ell}\langle x_{\ell},e\rangle g_{ij}.
}

For the mixed representation we compute
\eq{\del_t R^i_j = g^{ik}\del_t\cR_{kj} + (2Hh_{s}^i - g^{ir}V_{r;s} - g^{ir}V_{s;r})\cR^s_j}
and obtain the result.
\end{proof}

\begin{thm}\label{thm:MCF approx}
Let $\theta\in (0,\pi/2]$ and $\Si\sub\bbB^{n+1}$ be $\theta$-horocap-convex and non-flat. Then the $\theta$-capillary mean curvature flow $(\Si_t)_{t>0}$ converges in $C^{2,\al}$ to $\Si$ as $t\ra 0$. Moreover, all $(\Si_t)_{t>0}$ are strictly $\theta$-horocap-convex.
\end{thm}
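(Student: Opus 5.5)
Our plan is to run the $\theta$-capillary mean curvature flow \eqref{MCF} starting from $\Si$ and show two things. First, for $t\in(0,t_0]$ the flow exists smoothly and converges to $\Si$ in $C^{2,\al}$ as $t\ra0$. Second, the weak inequalities $\cR\geq0$ and $\ip{x-\cos\theta e}{e}\geq0$ become strict instantly.

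\textbf{Short-time existence and convergence.} Since $\Si$ is convex, $\theta$-capillary and non-flat, there is a direction $e_0\in\widehat{\del\Si}$ for which $\Si$ is starshaped with respect to $X_{e_0}$. We then proceed as in the short-time existence discussion of Section 3. We write the flow as a scalar oblique-derivative parabolic problem of the type \eqref{eq:scalar PDE}, with the mean-curvature operator in place of $G$. Since $\Si$ is only $C^2$, we would first mollify the graph function while keeping the compatibility condition, obtaining uniform $C^{2,\al}$-type estimates on a short time interval. Existence on $(0,t_0]$ with $C^{2,\al}$-continuity at $t=0$ then follows from \cite[Thm.~14.23]{Lieberman:/1998}. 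Convexity, $\ip{x}{\nu}\leq0$ (\autoref{lemma:support function estimate}) and $\abs{x}<1$ in the interior are preserved for short time, by the same arguments as in Section 3 together with the known preservation of convexity for capillary MCF \cite[Thm.~A.3]{HuWeiYangZhou:09/2023}.

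\textbf{Height condition.} From \eqref{height-mcf}, the function $u=\ip{x}{e}-\cos\theta$ satisfies a linear heat-type equation with the transport term $\ip{V}{e}$, and $u\geq0$ initially. On the boundary, $\n_\mu\ip{x}{e}=\ip{\mu}{e}$. At a boundary point where $u=0$, the boundary $\del\Si_t$ touches the sphere $\{\ip{x}{e}=\cos\theta\}\cap\bbS^n$ from inside. Using $\mu=\sin\theta x+\cos\theta\nu$ (from the angle relations) and $\ip{x}{e}=\cos\theta$, we get $\ip{\mu}{e}=\sin\theta\cos\theta+\cos\theta\ip{\nu}{e}$. Arguing as in \autoref{height est} yields a favourable sign, so the strong maximum principle together with the Hopf lemma gives $u>0$ for $t>0$. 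The exception is the case $u\equiv0$ somewhere, which would force $\Si$ to be flat and is excluded by assumption.

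\textbf{Strict horocap-convexity (main obstacle).} We use \autoref{R-mcf}. Let $\rho_1$ be the smallest eigenvalue of $\cR$, and argue in the viscosity sense via \cite[Lemma~4.1]{ChoiKimLee:12/2024} as in \autoref{lemma: R>0 pres}. At a minimum point the gradient term $-2\ip{x_k}{e}h_{11;k}$ is rewritten with \eqref{eq:grad R}, using $\cR_{11;k}=0$, as $-\tfr{2}{\ip{x-\cos\theta e}{e}}\ip{x_k}{e}^2(\ka_k-\ka_1)$, a lower-order term in $\cR$-type quantities. The zero-order terms give $\abs{h}^2\rho_1+\abs{h}^2$. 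The key point is that the source term $\abs{h}^2g$ is strictly positive wherever $\Si_t$ is non-flat, so $\rho_1$ satisfies a differential inequality of the form $\del_t\rho_1\geq\De\rho_1+b\cdot\n\rho_1-C\rho_1+\abs{h}^2$. By the strong maximum principle, $\rho_1>0$ at every interior point for $t>0$. At the boundary we reuse Case 2 of \autoref{lemma: R>0 pres}: $\n_\mu\cR_{11}=\cot\theta\,\cR_{11}(\ka_\mu-\ka_1)$, and the analogue of \eqref{boundaryidentity} for $F=H$, namely $\n_\mu H=0$, excludes a boundary zero by the Hopf lemma.

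The delicate points are twofold. One is handling the multiplicity of $\rho_1$ and the possibly degenerate term carrying the factor $(\ka_k-\ka_1)$ when $\ip{x-\cos\theta e}{e}$ is small near $t=0$. The other is making the strong maximum principle rigorous for the nonsmooth function $\rho_1$. For the latter we would instead apply the strong maximum principle for tensors (Hamilton's version with Neumann-type boundary), working with $\cR$ as a symmetric tensor satisfying $\del_t\cR\geq\De\cR+\dots+\abs{h}^2g$ in the null-eigenvector sense.
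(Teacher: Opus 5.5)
\textbf{There is a gap in the interior step for $\rho_1$, which is the heart of the proof.} After you use $\cR_{11;k}=0$, the first-order term $-2\ip{x^k}{e}h^1_{1;k}$ becomes
\[
-\fr{2}{\ip{x-\cos\theta e}{e}}\sum_k\ip{x_k}{e}^2(\ka_k-\ka_1)=-\fr{2}{\ip{x-\cos\theta e}{e}^2}\sum_k\ip{x_k}{e}^2(\rho_k-\rho_1)\le 0 .
\]
This is not ``lower order in $\cR$''. It does not vanish where $\rho_1$ vanishes, and it is bounded neither by $C\rho_1$ nor by $\abs{\n\rho_1}$. So your inequality $\del_t\rho_1\ge\De\rho_1+b\cdot\n\rho_1-C\rho_1+\abs{h}^2$ does not follow; this is exactly the ``delicate point'' you leave open. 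Switching to Hamilton's null-eigenvector principle does not help by itself. After rewriting $h_{ij;k}$ via \eqref{eq:grad R}, the same term reappears in the reaction part with the wrong sign on null vectors, namely as $\fr{2}{\ip{x-\cos\theta e}{e}}\br{\ka_1\abs{e^\top}^2-h(e^\top,e^\top)}$.

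The missing idea is that the extra second-order term in the lower-support inequality of \cite[Lemma~4.1]{ChoiKimLee:12/2024} cancels this term exactly. That inequality reads $\cR_{11;kk}\ge 2\sum_{l>D}(\cR_{1l;k})^2/(\rho_l-\rho_1)$. For $k=1$, Codazzi and \eqref{eq:grad R} with $\cR_{11;l}=0$ give $\cR_{1l;1}=\ip{x-\cos\theta e}{e}h_{11;l}=(\ka_l-\ka_1)\ip{x_l}{e}$. Since $\rho_l-\rho_1=\ip{x-\cos\theta e}{e}(\ka_l-\ka_1)$, that term equals $\fr{2}{\ip{x-\cos\theta e}{e}}\sum_{l>D}\ip{x_l}{e}^2(\ka_l-\ka_1)$. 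All that survives is $0\ge(1-\ep)\abs{h}^2$ for $\ti\cR=\cR+\ep g$, which contradicts strict convexity from \cite[Thm.~A3]{HuWeiYangZhou:09/2023}.

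Within your tensor framework you could instead absorb the term. At a null direction of $\cR$ one has $\abs{e^\top}^2\le 2(1+\ip{\nu}{e})=2\ip{x-\cos\theta e}{e}\ka_1$, which bounds the term by $4\ka_1(\ka_n-\ka_1)$. For $n\ge2$ this gives $\abs{h}^2-4\ka_1(\ka_n-\ka_1)\ge(\ka_n-2\ka_1)^2+\ka_1^2$.

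Once $\cR\ge0$ is known, no strong maximum principle is needed. A zero of $\rho_1$ at $t_0>0$ is a space-time minimum, and the same computation gives $0\ge\abs{h}^2>0$.

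There are also two smaller errors.
\begin{enumerate}
\item $\n_\mu H\ne0$ for capillary MCF. Differentiating the boundary condition gives $\n_\mu H=(\cot\theta\ka_\mu+\tfr{1}{\sin\theta})H>0$. This still yields $h_{\mu\mu;\mu}>0$, so your boundary step survives, but only via this identity.
\item The conormal is $\mu=(x+\cos\theta\nu)/\sin\theta$, not $\sin\theta x+\cos\theta\nu$. Hence $\ip{\mu}{e}=\cot\theta(1+\ip{\nu}{e})\ge0$ where $\ip{x}{e}=\cos\theta$.
\end{enumerate}
Your separate height argument is in any case redundant, since $\cR>0$ together with $h\ge0$ forces $\ip{x-\cos\theta e}{e}>0$.
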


\pf{
The short time existence in the class $C^{1+\fr{\alpha}{2},2+\alpha}([0,\de)\times\bbB^n)\cap C^{\infty}((0,\de)\times\bbB^n)$ of the flow \eqref{MCF}, starting from a convex initial data, can be guaranteed using arguments similar to those for the flow \eqref{eq:flow}, which are based on standard parabolic theory.

Next, we prove that the flow \eqref{MCF} preserves the $\theta$-horocap-convexity. Therefore
we define,
\eq{\ti \cR_{ij} = \cR_{ij} + \ep g_{ij}}
for arbitrary small positive constant $\ep$. Then $\ti\cR$ is positive at $t=0$, and we will prove that this remains valid over time. Then, we let $\ep\ra 0$. Let $\ti\rho_i$ be the eigenvalues of $\ti\cR$ and suppose there exists a first time $t_0>0$ and a point $\xi_0\in \Si_{t_0}$ such that $\ti\rho_1(t_0,\xi_0) = 0$.

{\bf Case 1:} $\xi_0\notin \del \Si_{t_0}$. Then $\eta \equiv 0$ is a smooth lower support of $\ti\rho_1$ at $(t_0,\xi_0)$. As in the proof of \autoref{lemma: R>0 pres} we obtain for the multiplicity $D$ of $\ti\rho_1(t_0,\xi_0)$,
\eq{\ti\cR_{ij;k} = \cR_{ij;k} = 0\q\fa 1\leq i,j \leq D}
and 
\eq{\del_t\ti\cR_1^1 \leq 0,\q \ti\cR_{11;kk}\geq 2\sum_{l>D} \fr{(\cR_{1l;k})^2}{\rho_l - \rho_1}.}
We insert the evolution of $\cR^1_1$ under mean curvature flow and obtain at $(t_0,\xi_0)$ using normal coordinates and \eqref{eq:grad R} twice,
\eq{0&\geq \Delta\cR^1_1+\cR^1_{1;k}V^{k}-2\langle x^k,e\rangle h^1_{1;k}+\vert h\vert^2\cR^1_{1}+\vert h\vert^2\\
    &\geq 2\sum_{l>D} \fr{(\cR_{1l;1})^2}{\rho_l - \rho_1}  -2\langle x^k,e\rangle h^1_{1;k}-\ep\vert h\vert^2+\vert h\vert^2\\
    &= 2\sum_{l>D} \fr{\ip{x-\cos\theta e}{e}^2(h_{11;l})^2}{\rho_l - \rho_1}  -2\langle x^k,e\rangle h^1_{1;k}+(1-\ep)\vert h\vert^2\\
    &= 2\sum_{l>D} \fr{\ip{x_l}{e}^2(\ka_l-\ka_1)^2}{\rho_l - \rho_1}  -2 \fr{\ip{x_k}{e}^2}{\ip{x-\cos\theta e}{e}}(\ka_k-\ka_1)+(1-\ep)\vert h\vert^2\\
    &=(1-\ep)\abs{h}^2,}
which is a contradiction, because we know from \cite[Thm.~A3]{HuWeiYangZhou:09/2023} that $\Si_{t_0}$ is strictly convex.

{\bf Case 2:} $\xi_0\in \del\Si_{t_0}$. This case is almost literally as in the proof of \autoref{lemma: R>0 pres}, except that we have to use $\n_\mu H = H >0$ to apply the Hopf lemma.

Hence we conclude that $\cR\geq 0$. Given this information, a word by word repetition of the above proof with $\ep=0$ remains valid to conclude the strict $\theta$-horocap-convexity for $t>0$. To see this, let $t_0>0$ and suppose that at some point $\xi_{t_0}\in \Si_{t_0}$ there holds $\rho_1(t_0,\xi_0)=0$. Then $(t_0,\xi_0)$ is a global minimum for $\rho_1$ and hence the above computation remains valid.  
}

\subsection{The equality case.} By the above approximation arguments, we have established the full set of quermassintegral inequalities \eqref{thm:QM-A} for $\theta$-horocap-convex hypersurfaces. It remains to characterize the equality case, and thereby finish the proof of \autoref{thm:QM}. 

 To achieve this, we need the following initial value independent estimate for the curvature function $F$ along the flow \eqref{eq:flow}.
 \begin{lemma}\label{lem:est-F}
 	Along the flow  \eqref{eq:flow} with  $F=\frac{H_{k}}{H_{k-1}}$, starting from a strictly $\theta$-horoconvex hypersurface, for any time $t\in(0,1]$ we have  the estimate:
	\eq{\frac{1}{F}\leq \frac{C}{\sqrt{t}},}
	where $C$ is a positive constant depending only on the initial barrier.
  \end{lemma}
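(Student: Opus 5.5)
We plan to adapt the proof of \autoref{prop:est-F}, inserting a time factor so that the initial lower bound on $F$ is not needed. As there, on the set $G=\{\ip{\nu}{e}\geq -1/2\}$ the $\theta$-horocap-convexity, preserved by \autoref{lemma: R>0 pres}, gives $F\geq\ka_1\geq \tfr14$ directly. On the boundary the same computation as in \autoref{prop:est-F} bounds $\ka_1$, and hence $F$, from below in terms of $\al_0$. Here $\cos\al_0$ is controlled by the initial barrier of \autoref{height est}. Both bounds involve only barrier data. It therefore suffices to control the quantity
\eq{w=\fr{\ip{x+\cos\theta\nu}{e}}{-\ip{\nu}{e}F}}
on the complement $\ti G$, now in the time-weighted form $\ti w=\sqrt{t}\,w$, or more conveniently $\log\ti w=\tfr12\log t+\log w$.

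The first step is to reuse the evolution of $\log w$ computed in \autoref{prop:est-F}. At a spatial interior maximum we have
\eq{\cL\log w=\fr{-\ip{X_e}{e}+\cos\theta(\ip{x}{\nu}-\ip{x}{e}\ip{\nu}{e})}{\ip{x+\cos\theta\nu}{e}}+\fr{1}{\ip{\nu}{e}}\br{\fr{\abs{e^\top}^2}{F}-\ip{x}{\nu}}+\fr{(1+F^{ij}g_{ij})\ip{\nu}{e}}{F}.}
On $\ti G$ we have $\ip{\nu}{e}<-1/2$. Moreover $F^{ij}g_{ij}\geq 1$ by concavity and homogeneity. The last term is therefore at most $-\fr{1}{F}$, while the second term contributes a nonpositive $1/F$ part plus a bounded part. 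The numerator $\ip{x+\cos\theta\nu}{e}$ is bounded above by $1+\cos\theta$. It is bounded below by a positive constant from the height estimate, since $\ip{x}{e}-\cos\theta\geq\de_0$ gives $\ip{x+\cos\theta\nu}{e}\geq \de_0+\cos\theta(1+\ip{\nu}{e})\geq\de_0$. Hence $\fr1F\geq c_1 w$ with $c_1$ depending only on the barrier, and we obtain
\eq{\cL\log w\leq C_0-c_1 w.}

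Next we add the time weight: $\cL\log\ti w\leq \fr{1}{2t}+C_0-c_1w$. At a large maximum of $\ti w$ with $t\leq1$, we have $w=\ti w/\sqrt t\geq \ti w$. The right-hand side is negative once $c_1\ti w\geq \sqrt t\,(\tfr12+C_0)$, so $\ti w$ can only increase while $\ti w\leq C$. At $t=0$ the quantity $\ti w$ vanishes, which lets us avoid any dependence on $\min F(0)$. Boundary maxima are handled as above, since there $w$ is bounded by a barrier constant, and then $\ti w\leq w$ for $t\leq 1$. We conclude $w\leq C/\sqrt t$, and since $\ip{x+\cos\theta\nu}{e}/(-\ip{\nu}{e})$ is bounded below by a barrier constant, this gives $1/F\leq C/\sqrt t$.

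The main obstacle is checking that the constants are genuinely independent of the initial hypersurface beyond the barrier. The key points are the lower bound $\ip{x+\cos\theta\nu}{e}\geq\de_0$ and the boundary bound from $\al_0$, both of which come from the spherical cap barrier in \autoref{height est}. A second point is that the test function is smooth only where $\ip{\nu}{e}<0$. The region $\ti G$ has its own interface with $G$, and on that interface $F\geq \tfr14$ already gives $\ti w\leq C$. So the maximum principle applies on $\ti G$ with this interface acting as an additional boundary on which $\ti w$ is controlled.
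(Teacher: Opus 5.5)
There is a genuine gap in the time-weight step. Your reduction to $\ti G$, the bounds on $G$, on the boundary and on the interface, and the inequality $\cL\log w\le C_0-c_1w$ at interior critical points are all fine.

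\textbf{Where the argument fails.} From $\cL\log\ti w\le \fr{1}{2t}+C_0-c_1\ti w/\rt t$, the right-hand side is negative only when $c_1\ti w>\fr{1}{2\rt t}+C_0\rt t$. You wrote $c_1\ti w\geq\rt t(\tfr12+C_0)$, which drops a factor $t$ on the $\fr{1}{2t}$ term. The correct threshold blows up as $t\ra0$, so the maximum principle gives no bound on $\rt t\,w$.

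This cannot be repaired by adjusting constants. A good term only linear in $w$ amounts to $\cL w\lesssim C_0w-c_1w^2$. Its ODE comparison gives $w\le C/t$ and no better; your argument does work with the weight $tw$. But $1/F\le C/t$ is not integrable at $t=0$. The lemma is used in the equality case precisely to integrate the speed and obtain $\dist(\Si^\ep,\Si^\ep_t)\le C\rt t$, so the weaker rate is useless there.

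\textbf{The missing idea.} You need a good term of order $F^{-2}$. In $\cL\log w$ every term is at most of order $1/F$. The reason is that the Hessians of $\ip{x}{e}$ and $\ip{\nu}{e}$, paired with the coefficient $\ip{x+\cos\theta\nu}{e}F^{-2}F^{ij}$, only produce $F^{ij}h_{ij}=F$ and $F^{ij}h^k_ih_{kj}$.

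The paper instead uses $\gamma=\fr{\ip{x+\cos\theta\nu}{e}}{(2-\abs{x}^2)F}$. The key steps are:
\begin{enumerate}
\item Since $F^{ij}(\abs{x}^2)_{;ij}=2F^{ij}g_{ij}-2F\ip{x}{\nu}$ with $F^{ij}g_{ij}\geq1$, the evolution $\cL\abs{x}^2$ contains $-2\ip{x+\cos\theta\nu}{e}F^{-2}F^{ij}g_{ij}$.
\item Multiplying by $\gamma/(2-\abs{x}^2)$, this produces $-\de\gamma^3$ in $\cL\gamma$.
\item One also has $\cL\br{\ip{x+\cos\theta\nu}{e}/F}\le C_1F^{-2}+C_2F^{-1}$, using $F^2\le F^{ij}h^k_ih_{kj}\le (n-k+1)F^2$ for $H_k/H_{k-1}$. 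Together with the lower bound $\ip{x+\cos\theta\nu}{e}\geq\de_0$, the $F^{-2}$ term is $O(\gamma^2)$ and can be absorbed.
\item This gives $\cL\gamma\le-\de\gamma^3+C\gamma$ at critical points, hence $\cL(t\gamma^2)\le-\de t\gamma^4+Ct\gamma^2+\gamma^2$ there.
\end{enumerate}
The maximum principle then bounds $t\gamma^2$, which is exactly $1/F\le C/\rt t$.
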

\begin{proof}
		Since $\Sigma_t$ is $\theta$-horocap-convex, $F$ admits a uniform lower bound on the boundary $\partial\mathbb{B}^{n}$, depending only on the initial barrier. For interior points, 
	we consider the following auxiliary function:
	\[\gamma=\frac{\langle x+\cos\theta\nu,e\rangle}{(2-\vert x\vert^2)F}.\]
	Firstly, using \eqref{eq:ev height}, \eqref{eq:ev dual height} and \eqref{eq:ev F}, we obtain
	\begin{align}\label{eq:ev speed}
		\mathcal{L}\frac{\langle x+\cos\theta\nu,e\rangle}{F}&=\frac{\mathcal{L}\langle x+\cos\theta\nu,e\rangle}{F}-\frac{\langle x+\cos\theta\nu,e\rangle}{F^2}\mathcal{L}F+2\frac{\langle x+\cos\theta\nu,e\rangle}{F^3}F^{ij}F_{;i}\br{\frac{\langle x+\cos\theta\nu,e\rangle}{F}}_{;j}\nonumber\\
			&=\frac{\langle x+\cos\theta\nu,e\rangle}{F}\langle x,e\rangle\br{\frac{F^{kl}h^{r}_{k}h_{rl} }{F^2}-2}+\frac{\langle x+\cos\theta\nu,e\rangle}{F^2}\langle e,\nu\rangle\left(\dot{F}^{ij}g_{ij}+1\right)\nonumber\\
		&\hp{=}+\frac{1}{2F}(1+\vert x\vert^2+2\cos\theta\langle x,\nu\rangle)+\frac{\langle x+\cos\theta\nu,e\rangle}{F^3}F^{kl}h^{r}_{k}h_{rl}\cos\theta\langle\nu,e\rangle\\
		&\leq\frac{C_1}{F^2}+\frac{C_2}{F},
	\end{align}
	where we used the facts $F^2\leq F^{ij}h^{k}_{i}h_{kj}\leq (n-k+1)F^2$ and $1\leq F^{ij}g_{ij}\leq k$ for $F=\frac{H_k}{H_{k-1}}$. Here $C_1$ and $C_2$ are positive constants depending only on $n$ and $k$. Next, we compute
	\begin{align*}
		\partial_t\vert x\vert^2=2\br{\frac{\langle x+\cos\theta\nu,e\rangle}{F}-\langle X_e,\nu\rangle}\langle\nu,x\rangle+2\langle\cT,x\rangle,
	\end{align*}
	and
	\begin{align*}
		\frac{\langle x+\cos\theta\nu,e\rangle}{F^2}F^{ij}(\vert x\vert^2)_{;ij}=2\frac{\langle x+\cos\theta\nu,e\rangle}{F^2}F^{ij}g_{ij}-2\frac{\langle x+\cos\theta\nu,e\rangle}{F}\langle x,\nu\rangle.
	\end{align*}
	Combining the above, we have
	\begin{align}\label{eq:norm}
		\cL\vert x\vert^2=-2\frac{\langle x+\cos\theta\nu,e\rangle}{F^2}F^{i}_{i}+2\br{2\frac{\langle x+\cos\theta\nu,e\rangle}{F}-\langle X_e,\nu\rangle}\langle\nu,x\rangle-\left\langle X_e-\frac{\cos\theta}{F}e,\nabla\vert x\vert^2\right\rangle.
	\end{align}
	Now, combining \eqref{eq:ev speed} and \eqref{eq:norm}, we deduce
	\begin{align*}
		\cL\gamma&=\frac{1}{2-\vert x\vert^2}\cL	\frac{\langle x+\cos\theta\nu,e\rangle}{F}+\frac{\gamma}{2-\vert x\vert^2}\cL\vert x\vert^2+2\frac{\langle x+\cos\theta\nu,e\rangle}{(2-\vert x\vert^2)F^2}F^{ij}\gamma_{;i}(2-\vert x\vert^2)_{;j}\\
		&\leq2\frac{\langle x+\cos\theta\nu,e\rangle}{(2-\vert x\vert^2)F^2}F^{ij}\gamma_{;i}(2-\vert x\vert^2)_{;j}-\delta\gamma^3+C\gamma,
	\end{align*}
	where $\delta$ and $C$ are positive constants depending only on the initial barrier. It follows that, at any interior spatial maximum point, there holds
	\begin{align*}
		\cL(t\gamma^2)\leq-t\delta\gamma^4+Ct\gamma^2+\gamma^2.
	\end{align*}
	  Let $M = \sup_{\Sigma_t,\, t \in [0,1]} (t\gamma^2)$, and suppose it is attained at an interior point at time $t_0 > 0$, then the maximum principle yields $M\leq \frac{C+1}{\delta}$. This completes the proof.
\end{proof}

Now suppose that a $\theta$-horocap-convex hypersurface $\Sigma$ achieves equality in the inequality \eqref{thm:QM-A} and is not a flat ball. Let $\Sigma^\ep$ be a family of hypersurfaces evolving by the mean curvature flow with initial data $\Sigma$.  According to \autoref{thm:MCF approx}, the family $(\Sigma^\ep)_{\ep>0}$ consists of strictly $\theta$-horocap-convex hypersurfaces and converges to $\Sigma$ in $C^{2,\alpha}$ as $\ep \to 0$. For each $\ep > 0$, let $\Sigma_t^\ep$ denote the solution to the flow \eqref{eq:flow} with initial hypersurface $\Sigma^\ep$. By \autoref{lem:est-F}, for any fixed $t>0$, the hypersurface $\Sigma_t^\ep$ satisfies a uniform positive lower bound for $F$ that is independent of $\ep$. On the other hand, recalling the evolution equation of $h^n_n$ derived in the proof of \autoref{prop:time kappa bound}, we obtain a uniform $C^2$ estimate for $\Sigma_t^\ep$, independent of $\ep$. Due to these uniform estimates, for every $t>0$, standard parabolic regularity yields uniform $C^{2,\alpha}$ estimates, e.g. \cite[Thm.~14.22]{Lieberman:/1998}. Consequently, $\Sigma_t^\ep$ converges  to a hypersurface $\Sigma_t$ in $C^{2,\beta}(\beta<\alpha)$ as $\ep\to 0$, for any $t>0$. This yields a solution $(\Sigma_t)_{t>0}$ to the flow \eqref{eq:flow}.

Next, we conclude that $(\Sigma_t)_{t>0}$ must be a family of $\theta$-capillary spherical caps with fixed radius. Indeed, by the monotonicity formulas \eqref{mon-1} and \eqref{mon-2}, we have 
\begin{align*}
	W_{k}(\wh\Si^{\ep}_{t})=W_{k}(\wh\Si^{\ep}), \ \text{and}\ W_{k-1}(\wh\Si^{\ep}_{t})\geq W_{k-1}(\wh\Si^{\ep}).
\end{align*}
Passing to the limit as $\ep \to 0$, and using the $C^{2,\alpha}$ convergence of $\Sigma^\ep$ to $\Sigma$, for all $t>0$ we obtain
\begin{align*}
	W_{k}(\wh\Si_{t})= W_{k}(\wh\Si)=f_k(W_{k-1}(\wh\Si))\leq f_k(W_{k-1}(\wh\Si_t)).
\end{align*}
On the other hand, by the quermassintegral inequalities  \eqref{thm:QM-A} we established for the  $\theta$-horocap-convex hypersurfaces, we see that 
\begin{align*}
	W_{k}(\wh\Si_{t})\geq f_k(W_{k-1}(\wh\Si_t)).
\end{align*}
Combining the above inequalities yields
\begin{align*}
	W_{k}(\wh\Si_{t})\equiv f_k(W_{k-1}(\wh\Si_t)\equiv W_{k}(\wh\Si)
 \end{align*}
 for all $t>0$.
Applying the monotonicity formula \eqref{mon-2} once again, we conclude that $\wh\Si_{t}$ 
must be a $\theta$-capillary spherical cap with the fixed radius determined by $W_{k}(\wh\Si)$. 

Finally, we conclude that $\Sigma_t$  converges to $\Sigma$ in the Hausdorff sense as $t\to 0$.
It follows that $\Sigma$ is a $\theta$-capillary spherical cap.
Indeed,
\begin{align*}
	\text{dist}(\Sigma,\Sigma_t)&\leq\text{dist}(\Sigma,\Sigma^{\ep})+\text{dist}(\Sigma^{\ep},\Sigma^{\ep}_t)+\text{dist}(\Sigma^{\ep}_t,\Sigma_t)\\
	&\leq\text{dist}(\Sigma,\Sigma^{\ep})+C\rt{t}+\text{dist}(\Sigma^{\ep}_t,\Sigma_t),
\end{align*}
where we used \autoref{lem:est-F}, and $C$ is a positive constant independent of $\ep$. 
Hence
\eq{\dist(\Si,\Si_t)\leq \limsup_{\ep\ra 0}(\text{dist}(\Sigma,\Sigma^{\ep})+C\rt{t}+\text{dist}(\Sigma^{\ep}_t,\Sigma_t)) = C\rt{t}}
 Then, letting $t$ tends to $0$, we  complete the proof.

\begin{remark}
We would like to make the reader aware that for the characterisation of the equality case in the weakly horocap-convex situation, the standard argument from \cite{GuanLi:08/2009} is not available, because we do not know whether the inequality extends to an open class near the optimisor. This means, a general variation will leave the class within which we have information and hence we can not conclude the Euler-Lagrange equation. This issue was missed in the papers \cite{ChenSun:03/2022,HuWeiYangZhou:09/2023} and hence they lack a complete proof of the characterisation of the equality case in their respective inequalities.

Our method, though not directly applicable to their situation due to our different notion of convexity, still gives a general strategy, which might be adaptable to complete their argument.
\end{remark}

\providecommand{\bysame}{\leavevmode\hbox to3em{\hrulefill}\thinspace}
\providecommand{\MR}{\relax\ifhmode\unskip\space\fi MR }
\providecommand{\MRhref}[2]{%
  \href{http://www.ams.org/mathscinet-getitem?mr=#1}{#2}
}
\providecommand{\href}[2]{#2}

\end{document}